\documentclass[a4paper,12pt]{article}
\usepackage{amsmath,amsfonts,amssymb,amsthm}

\newcommand{\Rset}{\mathbb{R}}
\newcommand{\Nset}{\mathbb{N}}

\newcommand{\Cset}{\mathbb{C}}

\newcommand{\PP}{\mathbb{P}}
\newcommand{\EE}{\mathbb{E}}
\newcommand{\DS}{\displaystyle}

\newcommand{\sostituzioneA}{ }    

\newtheorem{Theorem}{Theorem}[section]
\newtheorem{Definition}[Theorem]{Definition}
\newtheorem{Proposition}[Theorem]{Proposition}
\newtheorem{Lemma}[Theorem]{Lemma}
\newtheorem{Corollary}[Theorem]{Corollary}
\newtheorem{Remark}[Theorem]{Remark}

\newtheorem{Hypothesis}[Theorem]{Hypothesis}

\setlength\arraycolsep{1pt}

\title{Measure-valued equations for Kolmogorov operators with unbounded   coefficients}
\author{Luigi Manca\footnote{
Dipartimento di Matematica Pura e Applicata,
Universit\`a di Padova,
Via Trieste 63, 
35121 Padova, Italy, \emph{E-mail: manca@math.unipd.it}}}
\date{July 21, 2007}
\begin{document}

\maketitle

\begin{abstract} 
Given a real and separable Hilbert space $H$ we consider 
the measure-valued equation 
\begin{equation*}
\int_H\varphi(x)\mu_t(dx)- \int_H\varphi(x)\mu(dx)=
  \int_0^t\left( \int_HK_0\varphi(x)\mu_s(dx)   \right)ds, 
\end{equation*}
where $K_0$ is the Kolmogorov differential operator
\[
  K_0\varphi(x)=\frac12\textrm{Trace}\big[BB^*D^2\varphi(x)\big]+\langle x,A^*D\varphi(x)\rangle+\langle D\varphi(x),F(x)\rangle,
\]
$x\in H$,  $\varphi:H\to \Rset$ is a suitable smooth function, $A:D(A)\subset H\to H $ is linear,     $F:H\to H$ is a globally Lipschitz function and $B:H\to H$ is linear and continuous.
In order prove existence and uniqueness of a solution for the above equation, we show that $K_0$ is a core, in a suitable way, of the infinitesimal generator associated  to the solution of a certain stochastic differential equation in $H$.

We also extend the above results 
to a reaction-diffusion operator with polinomial nonlinearities. 
\end{abstract}
\section{Introduction}
Let $H$ be a separable real Hilbert space (with norm $|\cdot|$ and inner product $\langle\cdot,\cdot\rangle $), and let $\mathcal B(H)$ be its Borel $\sigma$-algebra.
$\mathcal L(H)$ denotes the usual Banach space of all   linear and continuous operators  in $H$, endowed with the supremum norm $\|\cdot\|_{\mathcal L(H)}$.
 We consider the 
stochastic differential equation in $H$ 
\begin{equation} \label{e.L.3.1}
\left\{\begin{array}{lll}
dX(t)&=& \big(AX(t)+F(X(t))\big)dt+B dW(t),\quad t\geq 0\\
\\
X(0)&=&x\in H,
\end{array}\right.
\end{equation}
where 
\begin{Hypothesis}  \label{h.L.3.1}
\begin{itemize}

\item[(i)] $A\colon D(A)\subset H\to H$ is the infinitesimal
generator of a strongly continuous semigroup $e^{tA}$ of type ${\cal G}(M,\omega)$, i.e. there exist $M\geq 0$ and $\omega\in \Rset$ such that $\|e^{tA}\|_{{\mathcal L}(H)}\le Me^{\omega t}$, $ t\ge 0$;
\item[(ii)] 
$B\in \mathcal L(H)$  and for any $t> 0$ the linear operator $Q_{t}$, defined by  
\begin{equation*}  
Q_{t}x=\int_{0}^{t}e^{sA}BB^*e^{sA^{*}}x\,ds,\;\;x\in H,\;t\ge 0
\end{equation*}
has finite trace;
\item[(iii)] $F:H\to H$ is a Lipschitz continuous map.
We set \\$\DS \kappa=\sup_{\stackrel{x,y\in H}{ x\not= y}}\frac{|F(x)-F(y)|}{|x-y|}$;
\item[(iv)] $(W(t))_{t\geq0}$ is a cylindrical Wiener process, defined on a stochastic basis $(\Omega,\mathcal F, (\mathcal F_t)_{t\geq0},\PP)$ and with values in $H$.
\end{itemize}
\end{Hypothesis}
It is well known that under hypothesis \eqref{h.L.3.1} problem \eqref{e.L.3.1} has a  unique {\em mild} solution  $(X(t,x))_{t\geq0,x\in H}$ (see, for instance, \cite{DPZ92}), that is for any $x\in H$ the process $(X(t,x))_{t\geq0 }$ is adapted to the filtration $(\mathcal F_t)_{t\geq0} $, it is continuous in mean square
and it fulfils    the integral equation
\begin{equation} \label{e.L.4.3}
 X(t,x)= e^{tA}x+\int_0^te^{(t-s)A}BdW(s)+\int_0^te^{(t-s)A}F(X(s,x))ds
\end{equation}
for any $t\geq0$.
Moreover, a straightforward computation shows that for any $T>0$ there exists $c>0$ such that 
\begin{equation} \label{e.L.3}
 \sup_{t\in[0,T]} |X(t,x)-X(t,y)|\leq 
    c|x-y|,\quad \forall x,\, y\in H, 
\end{equation}
and  
\begin{equation} \label{e.L.10a}
   \sup_{t\in[0,T]} \EE\bigl[|X(t,x)|   \bigr]\leq 
    c(1+ |x| ), \quad   x\in H,
\end{equation}
where the expectation is taken with respect to $\PP$.
Now denote by $C_b(H)$ the Banach space of all uniformly continuous and bounded functions $\varphi:H\to\Rset$, endowed with the supremum norm $\|\varphi\|_0=\sup_{x\in H}|\varphi(x)|$, $\varphi\in C_b(H)$.
Moreover, for any $k>0$, let $C_{b,k}(H)$ be the space  of all functions $\varphi:H\to\Rset$ such that the function $H\to\Rset$, $x\mapsto (1+|x|^k)^{-1}\varphi(x) $ belongs to $C_b(H)$. 
The space $C_{b,k}(H)$ is a Banach space, endowed with the norm $\|\varphi\|_{0,k}= \|(1+|\cdot|^k)^{-1}\varphi\|_0$.
In the following, we shall denote by $\left( C_{b,k}(H) \right)^*$ the topological dual space of $C_{b,k}(H)$.
As we shall see in Proposition \ref{p.L.2.1},  estimates \eqref{e.L.3}, \eqref{e.L.10a} allow us to define the  transition operator     associated to equation \eqref{e.L.4.3} in the space $C_{b,1}(H)$,   by the formula
\begin{equation} \label{e.L.38a}
   {P}_t\varphi(x)=\EE\big[\varphi(X(t,x))\big],\quad \varphi \in C_{b,1}(H),\,  \,t\geq 0,\, x\in H.
\end{equation}
The family of operators $({P}_t)_{t\geq0} $ maps $C_{b,1}(H)$ into $C_{b,1}(H)$ and enjoyes the semigroup property, but it is   not a strongly continuous semigroup 
(cf Proposition \ref{p.L.2.1}). 
However,  we can define the infinitesimal generator of $( {P}_t)_{t\geq0} $ in $C_{b,1}(H) $ in the following way
\begin{equation}  \label{e.L.0}
\begin{cases}
\DS D( {K}\sostituzioneA)=\bigg\{ \varphi \in C_{b,1}(H): \exists g\in C_{b,1}(H), \lim_{t\to 0^+} \frac{ {P}_t\varphi(x)-\varphi(x)}{t}=  \\ 
  \DS  \qquad\qquad\quad\quad=g(x),\,x\in H,\;\sup_{t\in(0,1)}\left\|\frac{ {P}_t\varphi-\varphi}{t}\right\|_{0,1}<\infty \bigg\}\\   
   {}   \\
  \DS  {K}\varphi(x)=\lim_{t\to 0^+} \frac{ {P}_t\varphi(x)-\varphi(x)}{t},\quad \varphi\in D( {K}\sostituzioneA),\,x\in H.
\end{cases}
\end{equation}
Let $\mathcal M(H)$ be the space of all the Borel finite measures on $H$ and for any $k>0$ let $\mathcal M_k(H)$ be the set of all $\mu \in \mathcal M(H) $ such that $\int_H|x|^k|\mu|(dx)<\infty $, where $ |\mu|$ is the total variation of $\mu$. 
The first result of the paper is the following
\begin{Theorem} \label{t.L.1}
Let $( {P}_t)_{t\geq0}$ be the semigroup defined by \eqref{e.L.38a} 
and   let  $( {K},D( {K}\sostituzioneA))$ be its infinitesimal generator in $C_{b,1}(H) $, defined   by  \eqref{e.L.0}.
Then,  the formula 
$$
  \langle \varphi,  {P}_t^*F\rangle_{\mathcal L(C_{b,1}(H),\,(C_{b,1}(H))^*)} = \langle  {P}_t\varphi, F\rangle_{\mathcal L(C_{b,1}(H),\, (C_{b,1}(H))^*)}
$$
defines a semigroup $( {P}_t^*)_{t\geq0}$ of linear and continuous operators  on $(C_{b,1}(H))^*$ that maps $\mathcal M_1(H)$ into $\mathcal M_1(H)$.
Moreover,  for any $\mu\in \mathcal M_1(H)$ there exists a unique family of measures  $\{\mu_t\}_{t\geq0}\subset \mathcal M_1 (H)$
such that
\begin{equation} \label{e.L.5b}
    \int_0^T\left(\int_H|x|^{}|\mu_t|(dx)\right)dt<\infty,\quad \forall T>0 
\end{equation}
and  
\begin{equation}   \label{e.L.8} 
   \int_H\varphi(x)\mu_t(dx)- \int_H\varphi(x)\mu(dx)=
  \int_0^t\left( \int_H {K}\varphi(x)\mu_s(dx)   \right)ds
\end{equation}
 for any $t\geq0$, $\varphi\in D({K}\sostituzioneA)$.
Finally, the solution of \eqref{e.L.8}  is given by $\{ {P}_t^*\mu\}_{t\geq0}$.  
\end{Theorem}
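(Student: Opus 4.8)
The plan is to split the statement into three tasks: constructing the dual semigroup and checking that it preserves $\mathcal M_1(H)$, verifying that $\{P_t^*\mu\}$ solves \eqref{e.L.8}, and proving uniqueness. For the first task, since each $P_t$ is a bounded linear operator on $C_{b,1}(H)$ by Proposition \ref{p.L.2.1}, the transpose $P_t^*$ is automatically bounded on $(C_{b,1}(H))^*$, and the semigroup law is obtained simply by dualising $P_{t+s}=P_tP_s$. The substantive point is that $P_t^*$ maps $\mathcal M_1(H)$ into itself. I would first embed $\mathcal M_1(H)$ into $(C_{b,1}(H))^*$ via $\mu\mapsto(\varphi\mapsto\int_H\varphi\,d\mu)$, which is injective because $C_b(H)\subset C_{b,1}(H)$ separates finite Borel measures on the separable space $H$. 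Writing $\pi_t(x,\cdot)$ for the law of $X(t,x)$, one has
\[
\langle\varphi,P_t^*\mu\rangle=\int_H P_t\varphi(x)\,\mu(dx)=\int_H\int_H\varphi(y)\,\pi_t(x,dy)\,\mu(dx),
\]
so $P_t^*\mu$ is represented by the measure $\nu_t(dy)=\int_H\pi_t(x,dy)\,\mu(dx)$, and that $\nu_t\in\mathcal M_1(H)$ is exactly the content of the first--moment bound \eqref{e.L.10a}, since $\int_H|y|\,|\nu_t|(dy)\le\int_H\EE[|X(t,x)|]\,|\mu|(dx)\le c\int_H(1+|x|)\,|\mu|(dx)<\infty$.

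For existence I would set $\mu_t=P_t^*\mu$. The integrability \eqref{e.L.5b} follows from the same estimate, because $\sup_{t\in[0,T]}\EE[|X(t,x)|]\le c(1+|x|)$ makes $t\mapsto\int_H|x|\,|\mu_t|(dx)$ bounded on $[0,T]$. For the equation itself the key tool is the pointwise integral form of the generator: for $\varphi\in D(K)$,
\[
P_t\varphi(x)-\varphi(x)=\int_0^tP_sK\varphi(x)\,ds,
\]
which is obtained by differentiating $s\mapsto P_s\varphi(x)$ and using dominated convergence, the domination coming from the uniform $\|\cdot\|_{0,1}$--bound built into the definition \eqref{e.L.0} of $D(K)$ together with \eqref{e.L.10a}. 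Integrating this identity against $\mu$ and applying Fubini—legitimate since $|P_sK\varphi(x)|\le\|K\varphi\|_{0,1}\,c(1+|x|)$ is $ds\otimes|\mu|$--integrable on $[0,t]\times H$—turns the left side into $\int_H\varphi\,d\mu_t-\int_H\varphi\,d\mu$ and the right side into $\int_0^t\langle K\varphi,P_s^*\mu\rangle\,ds=\int_0^t\int_H K\varphi\,d\mu_s\,ds$, which is precisely \eqref{e.L.8}.

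For uniqueness I would take two solutions and set $\rho_t=\mu_t-\nu_t$, so that $\rho_0=0$ and $\langle\varphi,\rho_t\rangle=\int_0^t\langle K\varphi,\rho_s\rangle\,ds$ for every $\varphi\in D(K)$. Fixing $T>0$ and $g\in D(K)$, I would study $h(t)=\langle P_{T-t}g,\rho_t\rangle$ and show $h$ is constant; then $h(T)=h(0)$ reads $\langle g,\rho_T\rangle=\langle P_Tg,\rho_0\rangle=0$. Differentiating $h$, the contribution of the moving test function is $\langle -KP_{T-t}g,\rho_t\rangle$, using $\frac{d}{ds}P_sg=P_sKg=KP_sg$ (invariance $P_sD(K)\subset D(K)$), while the increment of the measure, read through \eqref{e.L.8} with the frozen test function $P_{T-t}g$, contributes $\langle KP_{T-t}g,\rho_t\rangle$; the two cancel, so $h'\equiv0$. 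To pass from $\langle g,\rho_T\rangle=0$ for all $g\in D(K)$ to $\rho_T=0$, I would use that $R_\lambda g:=\int_0^\infty e^{-\lambda s}P_sg\,ds\in D(K)$ for $g\in C_b(H)$ and that $\lambda R_\lambda g\to g$ boundedly and pointwise as $\lambda\to\infty$, so that $D(K)$ already separates the finite measures, giving $\rho_T=0$ and hence $\mu_t=\nu_t=P_t^*\mu$.

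The main obstacle I expect is the differentiation of $h$ in the uniqueness step. Because $(P_t)_{t\ge0}$ is \emph{not} strongly continuous and the measures $\rho_t$ vary with $t$, the interchange of the $\delta\to0$ difference quotient with the integration against the moving measure $\rho_{t+\delta}$ is delicate: it must be justified by combining the uniform $C_{b,1}$--bound on the difference quotients encoded in \eqref{e.L.0} with the first--moment control from \eqref{e.L.10a}–\eqref{e.L.5b} to dominate, and with the absolute continuity of $t\mapsto\langle\psi,\rho_t\rangle$ furnished directly by the integral equation. Everything else is either routine duality or a dominated--convergence argument anchored on the two estimates \eqref{e.L.3} and \eqref{e.L.10a}.
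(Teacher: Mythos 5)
Your construction of $P_t^*$ and your existence argument are correct and essentially the same as the paper's (Lemmata \ref{l.L.3} and \ref{l.L.5.3}): the paper likewise represents $P_t^*\mu$ through the kernels $\pi_t(x,\cdot)$ (it additionally verifies countable additivity via simple functions), gets the first-moment bound from \eqref{e.L.10a}, and obtains \eqref{e.L.8} by differentiating $t\mapsto\int_H\varphi\,dP_t^*\mu$ using \eqref{e.L.0} and Proposition \ref{p.L.2.9}; your ``integrate the pointwise identity $P_t\varphi-\varphi=\int_0^tP_sK\varphi\,ds$ and apply Fubini'' version is equivalent. Where you genuinely diverge is uniqueness: the paper does not run a duality argument at all. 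It observes that $D(K,C_b(H))\subset D(K\sostituzioneA)$, so any solution of \eqref{e.L.8} also solves the measure equation \eqref{e.L.11a} for the generator in $C_b(H)$, satisfies \eqref{e.L.14a} because of \eqref{e.L.5b}, and is therefore zero (when $\mu=0$) by the uniqueness result for stochastically continuous Markov semigroups in $C_b(H)$, Theorem \ref{t.L.2.2}, quoted from \cite{Manca07}. Your attempt at a self-contained proof is more ambitious, and its final step (recovering $\rho_T=0$ from $\langle g,\rho_T\rangle=0$ for $g\in D(K\sostituzioneA)$ via $\lambda R(\lambda,K)g\to g$ pointwise with uniform $\|\cdot\|_{0,1}$ bounds) is fine.

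However, the differentiation of $h(t)=\langle P_{T-t}g,\rho_t\rangle$ — the step you yourself flag — contains a genuine gap, and the tools you list do not close it. After freezing the test function, the measure increment reads
\[
  \frac1\delta\langle P_{T-t}g,\rho_{t+\delta}-\rho_t\rangle=\frac1\delta\int_t^{t+\delta}\langle KP_{T-t}g,\rho_s\rangle\,ds,
\]
and to send $\delta\to0$ you need $s\mapsto\langle KP_{T-t}g,\rho_s\rangle$ to be continuous at $s=t$, or $t$ to be a Lebesgue point of it; since the function depends on $t$, the exceptional null set depends on $t$ as well, and a Fubini argument cannot put $t$ inside its own Lebesgue set. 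The integral equation gives (absolute) continuity of $s\mapsto\langle\psi,\rho_s\rangle$ only for $\psi\in D(K\sostituzioneA)$, but $KP_{T-t}g=P_{T-t}Kg$ is in general \emph{not} in $D(K\sostituzioneA)$: $Kg$ is merely in $C_{b,1}(H)$ and $(P_t)_{t\geq0}$ has no smoothing property putting $P_{T-t}Kg$ back into the domain. Nor can you approximate $KP_{T-t}g$ by $D(K\sostituzioneA)$ functions in the $\pi$-sense and pass continuity to the limit, because \eqref{e.L.5b} makes $t\mapsto|\rho_t|(H)+\int_H|x|\,|\rho_t|(dx)$ only locally integrable, not locally bounded, for a competitor solution. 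The same obstruction hits the term $\frac1\delta\langle P_{T-t-\delta}g-P_{T-t}g,\rho_{t+\delta}\rangle$: pointwise convergence with a $(1+|x|)$ bound does not justify the limit when the measure moves with $\delta$. The argument can be repaired with one extra idea: run it only for doubly smoothed test functions $g=R(\lambda,K)f$ with $f\in D(K\sostituzioneA)$, so that $g\in D(K\sostituzioneA)$ and $Kg=\lambda g-f\in D(K\sostituzioneA)$, hence $KP_{T-t}g=P_{T-t}Kg\in D(K\sostituzioneA)$ and every pairing above is itself governed by the integral equation; this yields $\langle R(\lambda,K)f,\rho_T\rangle=0$, and the smoothing is then removed by your own resolvent-limit step, first for $f\in D(K\sostituzioneA)$ and then for $f\in C_b(H)$. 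Alternatively, reduce to the $C_b(H)$ theory as the paper does.
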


A natural question is to study the above problem with the  
{\em Kolmogorov} differential operator
\begin{equation} \label{e.4}
 K_0\varphi(x)= \frac12\textrm{Tr}\big[BB^*D^2\varphi(x)\big]+\langle x,A^*D\varphi(x)\rangle+\langle D\varphi(x),F(x)\rangle,\,x\in H.
\end{equation}
We stress the fact that the operator ${K}$ is defined in an abstract way, whereas $K_0$ is a {\em concret} differential operator.

In order to study problem \eqref{e.L.8} with    $K_0$ replacing ${K}$, we shall develop the notion of $\pi$-convergence in the spaces  $C_{b,k}(H)$ and the related notion of $\pi$-core.
We recall that the $\pi$-convergence has been introduced in \cite{Priola}, in order to study a class of semigroups that are not strongly continuous.
This notion is one of the key tools   we use to prove our results.

Now let $\mathcal E_A(H)$ be the linear span of the real and imaginary part of the functions
\[
  H\to \Cset,\quad x\mapsto e^{i\langle x,h\rangle}, 
\quad h\in D(A^{*}), 
\]
where $D(A^*)$ is the domain of the adjoint operator of $A$.
We have the following
\begin{Theorem} \label{t.L.2} 
Under Hypothesis \eqref{h.L.3.1}, the operator $(K,D(K\sostituzioneA))$ is an estension of $K_0$, and for any $\varphi\in\mathcal E_A(H)$ we have  $\varphi\in D(K\sostituzioneA)$ and $K\varphi=K_0\varphi$.
Finally, $ \mathcal E_A(H)$ is a $\pi$-core for $(K,D(K\sostituzioneA))$.
\end{Theorem}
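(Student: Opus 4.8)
The plan is to verify the three assertions in turn, the concrete analytic input being an Itô formula along the mild solution $X(t,x)$ and the abstract input being the theory of $\pi$-convergence from \cite{Priola}. I would begin with the exponential functions: fix $h\in D(A^*)$ and set $\varphi_h(x)=e^{i\langle x,h\rangle}$, so that $D\varphi_h(x)=ih\,\varphi_h(x)$ and $D^2\varphi_h(x)=-(h\otimes h)\varphi_h(x)$, whence
\begin{equation*}
K_0\varphi_h(x)=\Big(-\tfrac12|B^*h|^2+i\langle x,A^*h\rangle+i\langle h,F(x)\rangle\Big)\varphi_h(x).
\end{equation*}
The point of restricting to $h\in D(A^*)$ is exactly that $\langle x,A^*D\varphi_h(x)\rangle=i\langle x,A^*h\rangle\varphi_h(x)$ is then a genuine continuous function of $x$ of linear growth, so $K_0\varphi_h\in C_{b,1}(H)$. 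Since $X(t,x)$ is only a mild solution it need not lie in $D(A)$, so Itô's formula cannot be applied directly; I would therefore replace $A$ by its Yosida approximation $A_\alpha=\alpha A(\alpha-A)^{-1}$, apply the classical Itô formula to the strong solutions $X_\alpha(t,x)$ of the regularized equation, and let $\alpha\to\infty$. Using $X_\alpha(t,x)\to X(t,x)$ together with $A_\alpha^*h\to A^*h$ (valid precisely because $h\in D(A^*)$) and the bounds \eqref{e.L.3}--\eqref{e.L.10a}, the limit procedure yields
\begin{equation*}
P_t\varphi(x)-\varphi(x)=\int_0^t P_s(K_0\varphi)(x)\,ds,\qquad \varphi\in\mathcal E_A(H).
\end{equation*}
The same computation, carried out for any $\varphi$ in the natural twice Fréchet differentiable domain on which $K_0\varphi$ is defined and lies in $C_{b,1}(H)$, gives the asserted extension property $K_0\subset K$.

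Next I would read off membership in $D(K)$. Dividing the integral identity by $t$ and letting $t\to0^+$, the pointwise limit equals $K_0\varphi(x)$ because $s\mapsto P_s(K_0\varphi)(x)$ is continuous at $s=0$, a consequence of the mean-square continuity of $X(\cdot,x)$ and \eqref{e.L.3}. The required uniform bound is then immediate from
\begin{equation*}
\sup_{t\in(0,1)}\Big\|\tfrac1t\big(P_t\varphi-\varphi\big)\Big\|_{0,1}\le \sup_{s\in(0,1)}\|P_s(K_0\varphi)\|_{0,1}<\infty,
\end{equation*}
since $(P_t)_{t\ge0}$ is bounded on $C_{b,1}(H)$ uniformly on $[0,1]$ by Proposition \ref{p.L.2.1} and $K_0\varphi\in C_{b,1}(H)$. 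This establishes $\varphi\in D(K)$ and $K\varphi=K_0\varphi$ on $\mathcal E_A(H)$.

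For the $\pi$-core property I would first check that $\mathcal E_A(H)$ is $\pi$-dense in $C_{b,1}(H)$: since $D(A^*)$ is dense in $H$ the exponentials $\{\varphi_h:h\in D(A^*)\}$ separate points, and after the usual truncation keeping the $C_{b,1}$-norms bounded a Stone--Weierstrass/Fourier argument adapted to $\pi$-convergence produces, for every $\varphi\in C_{b,1}(H)$, a bounded sequence in $\mathcal E_A(H)$ converging pointwise to $\varphi$. With $\pi$-density in hand, the statement reduces to the abstract $\pi$-core criterion of \cite{Priola}: a $\pi$-dense subspace $D_0\subset D(K)$ is a $\pi$-core provided the semigroup leaves invariant its $\pi$-closure.

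The main obstacle is precisely this invariance. When $F\equiv0$ the semigroup is of Ornstein--Uhlenbeck type and Mehler's formula gives $P_t\varphi_h=\exp\!\big(-\tfrac12\langle Q_th,h\rangle\big)\,\varphi_{e^{tA^*}h}$ with $e^{tA^*}h\in D(A^*)$, so $\mathcal E_A(H)$ is genuinely $P_t$-invariant and the core property is immediate. For $F\neq0$ this exact invariance fails, and the resolution I would pursue is to argue at the level of the graph closure: given $\varphi\in D(K)$, the averages $\varphi_t:=\tfrac1t\int_0^tP_s\varphi\,ds$ satisfy $\varphi_t\xrightarrow{\pi}\varphi$ and $K\varphi_t=\tfrac1t(P_t\varphi-\varphi)\xrightarrow{\pi}K\varphi$, reducing the problem to $\pi$-approximating such smooth averaged functions by elements of $\mathcal E_A(H)$ simultaneously in value and under $K_0$. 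Controlling both convergences at once — in particular ensuring that the approximants' images under the first-order term $\langle D\varphi,F\rangle$ converge $\pi$-ly — is the delicate point, and is where the linear growth estimate \eqref{e.L.10a} and the Lipschitz bound on $F$ enter decisively.
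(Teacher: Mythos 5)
Your handling of the first two assertions is essentially correct, though by a different route than the paper: you identify $K$ with $K_0$ on $\mathcal E_A(H)$ via It\^o's formula applied to Yosida-regularized strong solutions, whereas the paper never uses It\^o's formula at all --- it proves (Theorem \ref{t.L.4.1}) by a Taylor-formula comparison of $P_t$ with the Ornstein--Uhlenbeck semigroup $R_t$ that $D(L)\cap C_b^1(H)=D(K)\cap C_b^1(H)$ with $K\varphi=L\varphi+\langle D\varphi,F\rangle$ there, and then invokes the known identification of $L$ on $\mathcal E_A(H)$ (Proposition \ref{p.L.4.3}). Both arguments are viable for this part, and your derivation of the uniform bound $\sup_{t\in(0,1)}\|t^{-1}(P_t\varphi-\varphi)\|_{0,1}\le\sup_{s\in(0,1)}\|P_s(K_0\varphi)\|_{0,1}$ is fine.

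The $\pi$-core property, however, is left genuinely unproved, and that is the heart of the theorem. You correctly note that $\mathcal E_A(H)$ is not $P_t$-invariant when $F\neq0$, so the invariance criterion cannot be applied; but your substitute --- passing to the Ces\`aro averages $\varphi_t=\tfrac1t\int_0^tP_s\varphi\,ds$ --- is not a reduction of the problem. Those averages are no smoother in the space variable than $\varphi$ itself: $P_t$ is not assumed strong Feller (the noise may be degenerate), and time-averaging confers no spatial regularity, so ``$\pi$-approximating such smooth averaged functions by elements of $\mathcal E_A(H)$ simultaneously in value and under $K_0$'' is verbatim the original task. You then concede that controlling the first-order term $\langle D(\cdot),F\rangle$ along the approximation ``is the delicate point,'' and offer only the remark that the Lipschitz bound on $F$ ``enters decisively.'' That sentence is where the theorem actually lives, and nothing in your proposal supplies it.

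Concretely, the paper's Lemma \ref{l.L.4.6} needs three ingredients that are absent from your outline: (a) a regularization of $F$ into smooth $F_n$ with Lipschitz constants uniformly bounded by $\kappa$, obtained by smoothing with an auxiliary Ornstein--Uhlenbeck semigroup; (b) the estimate of Proposition \ref{P.L.7.2}, namely that the resolvents $R(\lambda,K_n)$ of the approximating generators map $C_b^1(H)$ into $C_b^1(H)$ with the gradient bound \eqref{e.L.43a} \emph{uniform in $n$} --- it is exactly this uniformity, via \eqref{e.L.25}, that makes the error term $\langle D\varphi_{n_1,n_2},F-F_{n_2}\rangle/(1+|\cdot|)$ vanish $\pi$-wise as $n_2\to\infty$; and (c) the Ornstein--Uhlenbeck core result with \emph{simultaneous gradient convergence} \eqref{e.L.47} of Proposition \ref{p.L.4.3}, which is what allows the final descent from $C_b^1$ resolvent functions to elements of $\mathcal E_A(H)$ while keeping $\langle D(\cdot),F_{n_2}\rangle$ under control. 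Without a mechanism producing approximants whose gradients are uniformly controlled and converge pointwise --- which the bare definition of $D(K)$ and the estimates \eqref{e.L.3}, \eqref{e.L.10a} do not provide --- the core property does not follow, and your argument stops exactly where the proof has to begin.
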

As consequence   we have the third main result
\begin{Theorem} \label{t.L.1.4}
For any $\mu\in\mathcal M_1(H)$ there exists an unique family of measures $\{\mu_t\}_{t\geq0}\subset \mathcal M_1(H)$ fulfilling \eqref{e.L.5b} and the measure equation 
\begin{equation}\label{e.L.2.5a}
\int_H\varphi(x)\mu_t(dx)- \int_H\varphi(x)\mu(dx)=
  \int_0^t\left( \int_HK_0\varphi(x)\mu_s(dx)   \right)ds, 
\end{equation}
$t\geq0,\,\varphi \in\mathcal E_A(H)$,
and  the  solution  is given by $\{ {P}_t^*\mu\}_{t\geq0}$.
\end{Theorem}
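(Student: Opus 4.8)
The plan is to deduce existence at once from the two preceding theorems, and to reduce the uniqueness assertion to the uniqueness already proved in Theorem~\ref{t.L.1} by upgrading the class of admissible test functions from $\mathcal E_A(H)$ to the full domain $D(K)$ via the $\pi$-core property. For existence I would simply set $\mu_t=P_t^*\mu$. By Theorem~\ref{t.L.1} this family lies in $\mathcal M_1(H)$, satisfies the integrability bound \eqref{e.L.5b}, and solves \eqref{e.L.8} for every $\varphi\in D(K)$. Theorem~\ref{t.L.2} gives $\mathcal E_A(H)\subset D(K)$ together with $K\varphi=K_0\varphi$ for $\varphi\in\mathcal E_A(H)$, so restricting \eqref{e.L.8} to such $\varphi$ and replacing $K\varphi$ by $K_0\varphi$ yields exactly \eqref{e.L.2.5a}. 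Hence $\{P_t^*\mu\}$ is a solution of the measure equation driven by $K_0$.

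For uniqueness, let $\{\mu_t\}\subset\mathcal M_1(H)$ be any family satisfying \eqref{e.L.5b} and \eqref{e.L.2.5a} for all $\varphi\in\mathcal E_A(H)$. The key point is that once I show that the same identity holds with $K\psi$ in place of $K_0\varphi$ for every $\psi\in D(K)$, i.e. that $\{\mu_t\}$ solves \eqref{e.L.8} on the whole domain, the uniqueness statement of Theorem~\ref{t.L.1} forces $\mu_t=P_t^*\mu$ and the proof is complete. Fix $\psi\in D(K)$. Since $\mathcal E_A(H)$ is a $\pi$-core for $(K,D(K))$ by Theorem~\ref{t.L.2}, there is a sequence $\varphi_n\in\mathcal E_A(H)$ with $\varphi_n\to\psi$ and $K_0\varphi_n=K\varphi_n\to K\psi$ in the $\pi$-sense, that is, pointwise on $H$ and with a uniform bound $\sup_n\big(\|\varphi_n\|_{0,1}+\|K_0\varphi_n\|_{0,1}\big)<\infty$. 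Writing \eqref{e.L.2.5a} for each $\varphi_n$ and letting $n\to\infty$, I would pass to the limit in all three integrals.

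The main obstacle is precisely this limit passage, and it is here that the structure of $\pi$-convergence and the bound \eqref{e.L.5b} must be combined. On the left-hand side, the pointwise convergence $\varphi_n\to\psi$ together with the uniform growth bound $|\varphi_n(x)|\le C(1+|x|)$ and the membership $\mu_t,\mu\in\mathcal M_1(H)$ (so that $1+|\cdot|$ is integrable against $|\mu_t|$ and $|\mu|$) give $\int_H\varphi_n\,d\mu_t\to\int_H\psi\,d\mu_t$ and likewise for $\mu$. On the right-hand side one has $K_0\varphi_n\to K\psi$ pointwise with $|K_0\varphi_n(x)|\le C(1+|x|)$, and the relevant double integral is dominated by $\int_0^t\int_H(1+|x|)\,|\mu_s|(dx)\,ds$, whose finiteness is ensured by \eqref{e.L.5b}; dominated convergence in the variables $(s,x)$ then yields $\int_0^t\int_H K_0\varphi_n\,d\mu_s\,ds\to\int_0^t\int_H K\psi\,d\mu_s\,ds$. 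Consequently $\{\mu_t\}$ satisfies \eqref{e.L.8} for every $\psi\in D(K)$, and Theorem~\ref{t.L.1} identifies it with $\{P_t^*\mu\}$.

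The one delicate point that I would check carefully is that the $\pi$-core approximation genuinely delivers the uniform $C_{b,1}$-bounds on both $\varphi_n$ and $K_0\varphi_n$ required for these dominated-convergence arguments, together with the integrability of the dominating function $1+|\cdot|$ against $|\mu_s|(dx)\,ds$ on $[0,t]\times H$; this is exactly the content of $\pi$-convergence in $C_{b,1}(H)$ as recalled from \cite{Priola} and of the standing bound \eqref{e.L.5b}. Everything else is a routine application of the two theorems already established.
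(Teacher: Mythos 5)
Your proposal is correct and follows essentially the same route as the paper: existence by restricting the Theorem~\ref{t.L.1} solution $\{P_t^*\mu\}_{t\geq0}$ to test functions in $\mathcal E_A(H)$ (where $K\varphi=K_0\varphi$ by Theorem~\ref{t.L.2}), and uniqueness by using the $\pi$-core property together with the bounds $|\varphi_n(x)|,|K_0\varphi_n(x)|\leq c(1+|x|)$, the integrability \eqref{e.L.5b}, and dominated convergence to show any solution of \eqref{e.L.2.5a} also solves \eqref{e.L.8} on all of $D(K)$, whence Theorem~\ref{t.L.1} applies. The only cosmetic difference is that the paper works (after a stated simplification to one index) with the $m$-indexed approximating sequence and adds a final, strictly redundant identification step via $\pi$-density of $\mathcal E_A(H)$ in $C_b(H)$, while you invoke the uniqueness of Theorem~\ref{t.L.1} directly.
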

In \cite{Manca07}  a similar problem when $F:H\to H$ is Lipschitz continuous and bounded has been investigated.
Due to the fact  that the nonlinearity is  bounded, all the result are stated in the space $C_b(H)$.
In our paper  we deal with   unbounded nonlinearities and we need to develop a notion of semigroup and associated infinitesimal generator  in the weighted space $C_{b,1}(H)$.

In section \ref{s.application} we shall extend %
the techniques %
and the results 
of the preceding sections 
to a reaction-diffusion operator with polinomial nonlinearities.

The motivation of this work is to have a better understanding on the relationships between the stochastic differential equation \eqref{e.L.3.1} and the Kolmogorov differential operator $K_0$.
In this direction, the characterization done by Theorems \ref{t.L.2}, \ref{t.RD.2} seems to be new. 

In other papers, the problem of extending a differential operator like \eqref{e.4} to the infinitesimal generator of a diffusion semigroup is studied in the weighted spaces $L^p(H,\nu)$, $p\geq1$ where $\nu$ is an invariant measure for the semigroup (see, for instance, \cite{DPZ96} and references therein).
Indeed, if $\mu$ is an invariant measure for the semigroup \eqref{e.L.38a}, then the semigroup  \eqref{e.L.38a} can be extended to a strongly continuous contraction semigroup in $L^p(H,\nu)$ whose generator is, say, $(K_p,D(K_p))$.
It worth to notice that as consequence of Theorem \ref{t.L.2}, the set $\mathcal E_A(H)$  is a  core (with respect to the norm of $L^p(H,\nu)$) for $(K_p,D(K_p))$.

Kolmogorov equations for measures in finite dimension have been the object of several papers.
We recall that in the papers \cite{BR00}  have been stated sufficient conditions in order to ensure existence of 
a weak solution for partial differential operators of the form 
$$
    H\varphi(t,x)= a^{ij}(t,x)\partial_{x_i}\partial_{x_j}\varphi(x)+b^i(t,x)\partial_{x_i}\varphi(x),\quad (t,x)\in (0,1)\times \Rset^d
$$
where $\varphi\in C_0^\infty( R^d) $ and $a^{ij},b^i\colon (0,1)\times R^d\to \Rset$, $1\leq i,j\leq d$ are suitable locally integrable functions.
With similar techniques, in  \cite{BDPR04} the problem is studied for parabolic differential operators of the form $Lu(t,x)=u_t(t,x)+Hu(t,x)$, $u\in  C_0^\infty((0,1)\times \Rset^d)$.
The infinite dimensional framework has been investigated  in \cite{BR01}, where it is considered an equation for measures of the form 
\[
  \int_X L_{A,B}\psi(x)\mu(dx)=0,\quad \forall \psi\in\mathcal K,
\]
where $X$ is a locally convex space, $\mathcal K$ is a suitable set of cylindrical functions and $ L_{A,B}$ is formally given by  
\[
 L_{A,B}\psi(x)=\sum_{i,j=1}^\infty A_{i,j}\partial_{e_i}\partial_{e_j}\psi(x)+\sum_{i =1}^\infty B_i\partial_{e_i}\psi(x),
\]
with $\mu$-measurable functions $A_{i,j}$, $B_i$ and vectors $e_i\in X$.
Under some integrability assumptions on   $A_{i,j}$, $B_i$, the authors prove existence of a measure $\mu$, possibly infinite, satisfying the above equation.

We stress that in our paper we prove {\em uniqueness} results.
In this direction, the results of   Theorems \ref{t.L.1.4}, \ref{t.RD.1.4} are, at our knowledge, new.

The paper is organized as follows:
in the next section we introduce the notions of $\pi$-convergence and we prove some results about the transition semigroup \eqref{e.L.38a} in the space $C_{b,1}(H)$.
Sections \ref{s.proof1}, \ref{s.proof2}, \ref{s.proof3} concern    proofs of Theorems \ref{t.L.1}, \ref{t.L.2}, \ref{t.L.1.4}, respectively.
Section \ref{s.application} is devoted to extend the results  to a reaction-diffusion operator.
\section{Notations and preliminary results}
If $E$ is a Banach space, we denote by $C_b(H;E)$ the Banach space of all uniformly continuous and bounded functions $f:H\to E$, endowed the usual supremum norm $\|\cdot\|_{C_b(H;E)}$. 
$C_b^1(H)$   denotes the space of all the functions $f\in C_b(H )$ which are Fr\'echet differentiable with uniformly continuous and bounded differential $DF\in C_b(H;\mathcal L(H;E))$.

We deal with semigroups of operators which  are not, in general, strongly continuous.
For this reason, we introduce the notion of $\pi$-convergence in the space $C_b(H)$ (see \cite{Manca07}, \cite{Priola}).

\begin{Definition}{\em
A sequence $(\varphi_n)_{n\in \Nset} \subset C_b(H)$ is said to be {\em $\pi$-convergent} to a function $\varphi$ $\in$ $C_b(H)$  if for any $x\in H$ we have 
$$
 \lim_{n\to \infty}\varphi_n(x)=\varphi(x)
$$ 
and 
$$
  \sup_{n\in\Nset}\|\varphi_n\|_0 <\infty. 
$$
Similarly,  the $m$-indexed sequence $(\varphi_{n_1,\ldots,n_m})_{n_1\in\Nset,\ldots,n_m\in\Nset}\subset C_b(H)$ is said to be $\pi$-convergent to  $\varphi$ $\in$ $C_b(H)$ if for any $i\in \{2,\ldots,m\}$  
 there exists an $i-1$-indexed sequence $(\varphi_{n_1,\ldots,n_{i-1}})_{n_1\in\Nset,\ldots,n_{i-1}\in\Nset}\subset C_b(H)$  such that
$$
 \lim_{n_i\to \infty}  \varphi_{n_1,\ldots,n_i} \stackrel{\pi}{=}\varphi_{n_1,\ldots,n_{i-1}} 
$$ 
and 
$$
 \lim_{n_1\to\infty}\varphi_{n_1 }\stackrel{\pi}{=}\varphi .
$$
We shall write
$$
 \lim_{n_1\to \infty}\cdots\lim_{n_m\to \infty} \varphi_{n_1,\ldots,n_m}\stackrel{\pi}{=}\varphi
$$ 
or $\varphi_n\stackrel{\pi}{\to}\varphi$ as $n \to \infty$, when the sequence has one index. 
}
\end{Definition}
Note that since the convergence is pointwise we can not take a diagonal sequence.
However, in order to avoid eavy notations, we shall often assume that the sequence has one index.

As easily seen the $\pi$-convergence implies the convergence in $L^p(H;\mu)$, for any $\mu\in{\cal M}(H)$, $p\in [1,\infty)$.

Let $k>0$. 
We shall often use the fact that if for a sequence  $(\varphi_n)_{n\in\Nset}\subset C_{b,k}(H)$ we have that
$(1+|x|^k)^{-1}\varphi_n\stackrel{\pi}{\to} \varphi\in    C_{b,k}(H)$ as $n\to \infty$, then the sequence converges to $\varphi$ in $L^p(H;\mu)$, for any $\mu\in{\cal M}_k(H)$, $p\in [1,\infty)$.
This argument may be viewed as an extension of the $\pi$-convergence to the spaces $C_{b,k}(H)$.

In Theorem \ref{t.L.2} we claim that   $\mathcal E_A(H) $    is a $\pi$-core for $( {K},D( {K}))$.
This means that if $\varphi\in\mathcal E_A(H)$ we have $\varphi\in D( {K}\sostituzioneA)$   and   $K\varphi=K_0\varphi$.
In addiction, if $\varphi \in D( {K}\sostituzioneA)$, there exist $m\in \Nset$ and an $m$-indexed sequence $(\varphi_{n_1,\ldots,n_m})_{n_1\in \Nset,\ldots,n_m\in \Nset}\subset \mathcal E_A(H)$ such that
\[
   \lim_{n_1\to \infty}\cdots\lim_{n_m\to \infty} \frac{\varphi_{n_1,\ldots,n_m}}{1+|\cdot|^{}}\stackrel{\pi}{=}\frac{\varphi }{1+|\cdot|^{} },
 \quad  \lim_{n_1\to \infty}\cdots\lim_{n_m\to \infty} \frac{K_0\varphi_{n_1,\ldots,n_m}}{1+|\cdot|^{} }\stackrel{\pi}{=}
      \frac{K\varphi}{1+|\cdot|^{}}.
\]
The construction of such a sequence is detailed   in section \ref{s.proof2}.

\subsection{The transition semigroup in $C_{b,1}(H)$}

This section is devoted to study the semigroup $  ({P}_t)_{t\geq0}$ in the space $C_{b,1}(H)$.
\begin{Proposition} \label{p.L.2.1}
 Formula \eqref{e.L.38a}  
defines a 
semigroup of operators $(  {P}_t)_{t\geq0} $ in $C_{b,1}(H)$,
and there exist  a family of probability measures $\{\pi_t(x,\cdot),\,t\geq0,\,x\in H\}\subset  \mathcal M_1(H)$ 
and two constants $c_0>0$, $\omega_0 \in \Rset$
such that
\begin{itemize}
\item[(i)] $ {P}_t\in \mathcal L( C_{b,1}(H))$ and $  \|  {P}_t\|_{\mathcal L(C_{b,1}(H))} \leq c_0 e^{ \omega_0t}$; 
\item[(ii)] $\DS  {P}_t\varphi(x)=\int_H\varphi(y)\pi_t(x,dy) $, for any $t\geq0$, $\varphi \in C_{b,1}(H)$, $x\in H$;
\item[(iii)] for any  $\varphi \in C_{b,1}(H)$, $x\in H$, the function $\Rset^+\to\Rset$, $t\mapsto  {P}_t\varphi(x)$ is continuous. 
\item[(iv)] $  {P}_t  {P}_s=  {P}_{t+s}$, for any $t,s\geq0$ and $  {P}_0=I$;
\item[(v)] for any $\varphi\in C_{b,1}(H)$ and any sequence $(\varphi_n)_{n\in \Nset}\subset C_{b,1} (H)$ such that 
\[
  \lim_{n\to\infty} \frac{\varphi_n}{1+|\cdot|^{}}  \stackrel{\pi}{=}\frac{\varphi }{1+|\cdot|^{}} 
\]
we have, for any $t\geq0$,  
\[
  \lim_{n\to\infty} \frac{{P}_t\varphi_n}{1+|\cdot|^{}}  \stackrel{\pi}{=}\frac{{P}_t\varphi }{1+|\cdot|^{}}. 
\]
\end{itemize}
\end{Proposition}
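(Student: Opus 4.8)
The plan is to realize $(P_t)_{t\geq0}$ as the transition semigroup of the Markov process $(X(t,x))$ and to read off every assertion from the two a priori estimates \eqref{e.L.3} and \eqref{e.L.10a}. First I would define, for each $t\geq0$ and $x\in H$, the measure $\pi_t(x,\cdot)$ to be the law of the random variable $X(t,x)$ on $(H,\mathcal B(H))$. This is a Borel probability measure, and \eqref{e.L.10a} gives $\int_H|y|\,\pi_t(x,dy)=\EE[|X(t,x)|]<\infty$, so $\pi_t(x,\cdot)\in\mathcal M_1(H)$; the change-of-variables formula then yields (ii) directly from the definition \eqref{e.L.38a} of $P_t\varphi$. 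For the operator bound in (i) I would first sharpen \eqref{e.L.10a} into an exponential estimate $\EE[|X(t,x)|]\le c_0e^{\omega_0t}(1+|x|)$ valid for all $t\geq0$: taking expectations of the norm in the mild formulation \eqref{e.L.4.3}, bounding $\|e^{(t-s)A}\|_{\mathcal L(H)}\le Me^{\omega(t-s)}$, controlling the stochastic convolution through the finite trace of $Q_t$ (Hypothesis \eqref{h.L.3.1}(ii)) and using $|F(y)|\le|F(0)|+\kappa|y|$, a Gronwall argument produces the constants $c_0,\omega_0$. Since every $\varphi\in C_{b,1}(H)$ satisfies $|\varphi(y)|\le\|\varphi\|_{0,1}(1+|y|)$, this gives $|P_t\varphi(x)|\le\|\varphi\|_{0,1}\,\EE[1+|X(t,x)|]\le c_0e^{\omega_0t}(1+|x|)\|\varphi\|_{0,1}$, which is the claimed norm bound.

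It remains, for (i), to check that $P_t\varphi\in C_{b,1}(H)$, i.e. that $x\mapsto(1+|x|)^{-1}P_t\varphi(x)$ is uniformly continuous. Writing $\varphi=(1+|\cdot|)\psi$ with $\psi\in C_b(H)$ and decomposing
\[
\frac{P_t\varphi(x)}{1+|x|}-\frac{P_t\varphi(y)}{1+|y|}=\frac{P_t\varphi(x)-P_t\varphi(y)}{1+|x|}+P_t\varphi(y)\Big(\frac1{1+|x|}-\frac1{1+|y|}\Big),
\]
I would estimate each piece using the \emph{pathwise} bound \eqref{e.L.3} (note that the stochastic convolution cancels in the difference $X(t,x)-X(t,y)$, so $|X(t,x)-X(t,y)|\le c|x-y|$ holds almost surely), the uniform continuity of $\psi$, and the growth bound just proved. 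This is the technical heart of the statement, but it reduces to elementary estimates once \eqref{e.L.3} is used pathwise.

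For (iv) I would invoke the Markov property of the mild solution, which follows from uniqueness in \eqref{e.L.4.3} together with the independence of the increments of $W$: conditioning on $\mathcal F_s$ gives $P_{t+s}\varphi(x)=\EE[(P_t\varphi)(X(s,x))]=P_s(P_t\varphi)(x)$, and $P_0=I$ is immediate. For (iii) I would fix $\varphi$ and $x$, take $t_n\to t_0$, and use the mean-square continuity of $t\mapsto X(t,x)$ to get $X(t_n,x)\to X(t_0,x)$ in $L^2(\Omega;H)$, hence almost surely along a subsequence; since $\varphi$ has at most linear growth and the solution has bounded second moments on $[0,T]$, the family $\{\varphi(X(t_n,x))\}_n$ is uniformly integrable, so $\EE[\varphi(X(t_n,x))]\to\EE[\varphi(X(t_0,x))]$, and a subsequence argument upgrades this to full convergence. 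The need for a moment of order strictly larger than $1$ here (rather than just \eqref{e.L.10a}) is the one point where I must appeal to a second-moment bound for the solution.

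Finally, (v) is a clean dominated-convergence argument. If $(1+|\cdot|)^{-1}\varphi_n\stackrel{\pi}{\to}(1+|\cdot|)^{-1}\varphi$, then $\varphi_n\to\varphi$ pointwise and $c:=\sup_n\|\varphi_n\|_{0,1}<\infty$; hence $\varphi_n(X(t,x))\to\varphi(X(t,x))$ almost surely with $|\varphi_n(X(t,x))|\le c\,(1+|X(t,x)|)$, and the dominating variable is integrable by \eqref{e.L.10a}. Dominated convergence gives $P_t\varphi_n(x)\to P_t\varphi(x)$ for each $x$, while the uniform bound $\sup_n\|P_t\varphi_n\|_{0,1}\le c_0e^{\omega_0t}c<\infty$ comes from (i); dividing by $1+|x|$ yields exactly the asserted $\pi$-convergence. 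I expect the weighted uniform-continuity step in (i) and the uniform-integrability passage in (iii) to be the only places requiring real care.
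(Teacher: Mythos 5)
Your proposal is correct and follows the same overall route as the paper: $\pi_t(x,\cdot)$ is the law of $X(t,x)$, the weighted uniform continuity in (i) rests on using \eqref{e.L.3} pathwise (the noise cancels in the difference, exactly as you note) together with \eqref{e.L.10a}, (iv) comes from the Markov property after the case of bounded $\varphi$, and (v) is dominated convergence. Two points of execution differ, and are worth recording. First, for the constants $c_0,\omega_0$ the paper does not run your Gronwall argument on the mild equation; it simply observes that \eqref{e.L.10a} makes the operators $P_t$ bounded in a neighbourhood of $t=0$ and then gets the exponential bound from the semigroup property (iv) by the standard iteration. This is softer than your route, which additionally requires verifying that $\EE\bigl|\int_0^t e^{(t-s)A}B\,dW(s)\bigr|\le(\mathrm{Tr}\,Q_t)^{1/2}$ grows at most exponentially in $t$ on all of $[0,\infty)$ --- true under Hypothesis \ref{h.L.3.1}, but a step you should not leave implicit. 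Second, in (iii) the paper asserts that mean-square continuity of $t\mapsto X(t,x)$ gives $|X(t,x)-X(s,x)|\to 0$ $\PP$-a.s.\ as $t\to s$ and then invokes dominated convergence; as stated this is too quick (mean-square convergence yields a.s.\ convergence only along subsequences, and the would-be dominating function $1+|X(t,x)|$ varies with $t$), so your subsequence-plus-uniform-integrability argument, which uses the second-moment bound recorded in the paper's Remark 2.5, is actually the more careful way to close exactly this step. Your two-term decomposition in (i) versus the paper's splitting into $I_1+I_2+I_3$ is cosmetic: one checks that $I_1+I_2$ equals your first term and $I_3$ plays the role of your second, and both are estimated with the same ingredients.
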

\begin{proof} 
 (i). Take   $\varphi\in   C_{b,1}(H)$, $t\geq0 $. 
We have to show that $ {P}_t\varphi\in C_{b,1}(H) $, that is the function $x\mapsto (1+|x|^{})^{-1}P_t\varphi(x) $ is uniformly continuous and bounded. 
Take $\varepsilon>0 $ and let $\theta_\varphi:\Rset^+\to \Rset $ be the modulus of continuity of $(1+|\cdot|^{})^{-1}\varphi$.
We have
\[
   \frac{ {P}_t\varphi(x)}{1+|x|^{}}- \frac{ {P}_t\varphi(y)}{1+|y|^{}}  
 = I_1(t,x,y)+I_2(t,x,y)+I_3(t,x,y), 
\]
where
\begin{eqnarray*}          
 I_1(t,x,y)&=& 
    \EE\left[\left( \frac{\varphi(X(t,x))}{1+|X(t,x)|^{}}- 
  \frac{\varphi(X(t,y))}{1+|X(t,y)|^{}}\right)
     \frac{1+ |X(t,x)|^{} }{1+|x|^{}} \right],\\
 I_2(t,x,y)&=& \EE\left[\frac{\varphi(X(t,y))}{1+|X(t,y)|^{}}
   \left(\frac{|X(t,x)|^{} -|X(t,y)|^{}}
                         {1+|x|^{}}  \right)\right],\\
  I_3(t,x,y)&=& \EE\left[\frac{\varphi(X(t,y))\left(1+|X(t,x)|^{}  \right)}{1+|X(t,y)|^{}}         
        \left( \frac{1}{1+|x|^{}} -\frac{1}{1+|y|^{}} \right)\right].
\end{eqnarray*} 
For $I_1(t,x,y)$ we have, by taking into account  \eqref{e.L.3}, \eqref{e.L.10a}, that there exists $c>0$ 
such that
\begin{eqnarray*}
  |I_1(t,x,y)|&\leq&    \EE\left[\theta_\varphi (|X(t,x)-X(t,y)| )   
  \frac{1+ |X(t,x)|^{} }{1+|x|^{}} \right] \\ 
&\leq&      \theta_\varphi \left(c_{}|x-y| \right) 
\frac{\EE\left[1+ |X(t,x)|^{}  \right]}{1+|x|^{}}\leq c_{}    \theta_\varphi (c_{}|x-y| ).
\end{eqnarray*}
Then, there exists $\delta_1>0$ such that $|I_1(t,x,y)|\leq \varepsilon/3$, for any $x,y\in H$ such that $|x-y|\leq \delta_1$.
For  $I_2(t,x,y)$ we have, by elementary inequalities,
\begin{eqnarray*}
  | I_2(t,x,y)|&\leq& \frac{\|\varphi\|_{0,1}}{1+|x|} \EE\left[ \left||X(t,x)|^{} -|X(t,y)|^{}  \right|\right] 
\\
   &\leq& \frac{ \|\varphi\|_{0,1}}{ 1+|x|^{}}   \EE\left[ |X(t,x)-X(t,y)|\right ] 
    \leq   \|\varphi\|_{0,1}  c_{} |x-y|.        
\end{eqnarray*} 
Then there exists $\delta_2>0$ such that $|I_2(t,x,y)|\leq \varepsilon/3$, for any $x,y\in H$ such that $|x-y|\leq \delta_2$.
Similarly, for $ I_3(t,x,y)$ we have
\begin{eqnarray*}
  | I_3(t,x,y)|&\leq& \|\varphi\|_{0,1}   \frac{ 1+\EE\left[ |X(t,x)|^{}  \right]}{1+|x|^{}}    \frac{\left||x|^{}-|y|^{}\right|}{   1+|y|^{} } 
\\
&\leq& c\|\varphi\|_{0,1}  (1+ c_{})   |x-y|.
\end{eqnarray*}
for some $c>0$.
Then, there exists $\delta_3>0$ such that $|I_3(t,x,y)|\leq \varepsilon/3$, for any $x,y\in H$ such that $|x-y|\leq \delta_3$.
Finally, for any $x,y\in H $ with $|x-y|\leq \min\{\delta_1,\delta_2,\delta_3   \}$ we find that 
\[
    \left|\frac{ {P}_t\varphi(x)}{1+|x|^{}}- \frac{ {P}_t\varphi(y)}{1+|y|^{}}  \right|<\varepsilon
\]
as claimed.
Now, by taking into account \eqref{e.L.10a}, there exists $c>0$ such that 
\[
     \left|\frac{ {P}_t\varphi(x)}{1+|x|^{}}\right|\leq \|\varphi\|_{0,1}\frac{1+\EE\left[ |X(t,x)|^{}\right]}{1+|x|^{}}
 \leq c\|\varphi\|_{0,1} 
\]
Then $ {P}_t\varphi\in C_{b,1}(H)$.
Note that by \eqref{e.L.10a} it follows that the operators $P_t$ are bounded in a neighborhood of $0$.
Hence,  the existence of the two constants $c_0>0$, $\omega_0\in\Rset$ follows by (iv) and by a standard argument. 
Notice that by the same argument follows\footnote{Of course, to prove (iv)-(v) we do not use this statement of (i)} (v).\\
(ii).
Take $\varphi\in C_{b,1}(H)$, and consider a sequence $(\varphi_n)_{n\in\Nset}\subset C_b(H)$ such that 
\begin{equation} \label{e.L.14}
    \lim_{n\to\infty}\frac{\varphi_n}{1+|\cdot|^{}}\stackrel{\pi}{=}\frac{\varphi}{1+|\cdot|^{}}.
\end{equation}
 Since $\pi_t(t,\cdot)$ is the image measure of $X(t,x)$ in $H$, the representation (ii) holds for any $\varphi_n$, that is
\[
   {P}_t\varphi_n(x)=\EE\bigl[\varphi_n(X(t,x))  \bigr]=\int_H\varphi_n(y)\pi_t(x,dy).
\]
Since \eqref{e.L.10a} holds we have $\pi(x,\cdot) \in \mathcal M_1(H)$,   and by \eqref{e.L.14} there exists $c>0$ such that $|\varphi_n(x)|\leq c(1+|x|^{}) $,   
for any $n\in \Nset$, $x\in H$.  
 Finally, the result follows by the dominated convergence theorem.  \\
(iii). For any $\varphi\in  C_{b,1}(H)$, $x\in H$, $t,s\geq0$ we have 
\begin{eqnarray*}
    {P}_t\varphi(x)- {P}_s\varphi(x)&=&
   \EE\left[\frac{\varphi(X(t,x))}{1+|X(t,x)|^{}}- \frac{\varphi(X(s,x))}{1+|X(s,x)|^{}}\left( 1+|X(t,x)|^{}  \right) \right] 
\\
  && +  \EE\left[    \frac{\varphi(X(s,x))}{1+|X(s,x)|^{}}\left(  |X(t,x)|^{}-|X(s,x)|^{}  \right) \right].
\end{eqnarray*}
Then
\begin{multline}  \label{e.L.15}
| {P}_t\varphi(x)- {P}_s\varphi(x)|\leq  
\EE\left[\theta_\varphi\left(  |X(t,x) - X(s,x) |   \right) \left( 1+|X(t,x)|^{}  \right) \right]\\
  +  \|\varphi\|_{0,1}\EE\left[  |X(t,x) - X(s,x) |\right],
\end{multline}
where $\theta_\varphi:\Rset^+\to \Rset$ is the modulus of continuity of $(1+|\cdot| )^{-1}\varphi $. 
Note also that  since for any $x\in H$ the process $(X(t,x))_{t\geq0} $ is continuous in mean square, we have  
\[
   \lim_{t\to s}  |X(t,x) - X(s,x) |    =0 \quad \PP\text{-a.s.}.
\]
Hence, by taking into account that $\theta_\varphi:\Rset^+\to \Rset$ is bounded and that  \eqref{e.L.10a} holds, we can apply the dominated convergence theorem to show that the first term in the right-hand side of \eqref{e.L.15} vanishes as $t\to s$.
Finally, the fact that the second term in the right-hand side of \eqref{e.L.15} vanishes as $t\to s$ may be proved by the same argument.
\\
(iv).
Take $\varphi\in C_{b,1}(H)$, and consider a sequence $(\varphi_n)_{n\in\Nset}\subset C_b(H)$ such that $(1+|\cdot|^{})^{-1} \varphi_n\stackrel{\pi}{\to}(1+|\cdot|^{})^{-1} \varphi$ as $n\to\infty$.
By the markovianity of the process $X(t,x)$ it follows that (iv) holds true for any $\varphi_n$.
Then, since by (iii) it follows that $(1+|\cdot|^{})^{-1} {P}_t\varphi_n\stackrel{\pi}{\to}(1+|\cdot|^{})^{-1} {P}_t\varphi$ as $n\to\infty$, still by (iii)  
we find
\[
 \frac{ {P}_{t+s}\varphi}{1+|\cdot|^{} }\stackrel{\pi}{=}  \lim_{n\to\infty}\frac{ {P}_{t+s}\varphi_n}{1+|\cdot|^{} }
  = \lim_{n\to\infty}\frac{ {P}_t {P}_s\varphi_n}{1+|\cdot|^{} }\stackrel{\pi}{=}\frac{ {P}_t {P}_s\varphi}{1+|\cdot|^{}}.
\]
This concludes the proof.
\end{proof}

\begin{Remark}{\em 
 We recall that for any $k>0 $, $T>0$ there exists $c_k>0$ such that 
\begin{equation*} 
  \sup_{t\in[0,T]}\EE[|X(t,x)|^k]<c_k(1+|x|^k), 
\end{equation*}
that implies $\{\pi_t(x,\cdot),\,t\geq0,\,x\in H\}\subset \bigcap_{k\geq0}\mathcal M_k(H)$.
Consequently,  all the results of this section are true with $C_{b,k}(H)$ replacing $C_{b,1}(H)$.
}
\end{Remark}

Here we collect some useful properties of the infinitesimal generator $(K,D(K\sostituzioneA)) $. 
\begin{Proposition} \label{p.L.2.9}
Let   $X(t,x)$  be the mild solution of problem \eqref{e.L.3.1} and let $(P_t)_{t\geq0}  $   
be the associated transition semigroups in the   space 
$C_{b,1}(H)$ defined by \eqref{e.L.38a}. 
Let also $(K,D(K\sostituzioneA)) $   
be the associated infinitesimal generators, defined by  \eqref{e.L.0}.  
Then
\begin{itemize}
\item[(i)] for any $\varphi \in D( {K})$, we have $  {P}_t\varphi \in D( {K}\sostituzioneA))$ and 
     $  {K} {P}_t\varphi =  {P}_t  {K}\varphi$, $t\geq0$;
\item[(ii)] for any $\varphi \in D( {K}\sostituzioneA)$, $x\in H$, the map $[0,\infty)\to \Rset$, $t\mapsto  {P}_t\varphi(x)$ is continuously differentiable and $(d/dt) {P}_t\varphi(x) =  {P}_t  {K}\varphi(x)$;
\item[(iii)]   given $c_0>0$ and $\omega_0$ as in Proposition \ref{p.L.2.1},  for any $\lambda>\omega_0$ the linear operator $R(\lambda, {K})$ on $C_{b,1}(H)$ done by
$$
   R(\lambda, {K})f(x)=\int_0^\infty e^{-\lambda t} {P}_tf(x)dt, \quad f\in C_{b,1}(H),\,x\in H
$$
satisfies, for any  $f\in C_{b,1}(H)$
$$
   R(\lambda, {K})\in \mathcal L(C_{b,1}(H)),\quad \quad \|R(\lambda, {K})\|_{\mathcal L(C_{b,1}(H))}\leq 
\frac{c_0 }{\lambda-\omega_0} 
$$
$$
  R(\lambda, {K})f\in D( {K}\sostituzioneA),\quad (\lambda I- {K})R(\lambda, {K})f=f.
$$
We call $R(\lambda, {K})$ the {\em resolvent} of $  {K}$ at $\lambda$.
\end{itemize}
\end{Proposition}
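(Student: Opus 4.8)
The plan is to follow the classical generator calculus, adapted to the $\pi$-continuity of $(P_t)_{t\geq0}$. For (i) I would fix $\varphi\in D(K)$ and use the semigroup law (Proposition \ref{p.L.2.1}(iv)) to write, for $s>0$, the difference quotient of $P_t\varphi$ as $P_t$ applied to that of $\varphi$:
\[
\frac{P_s(P_t\varphi)-P_t\varphi}{s}=P_t\left(\frac{P_s\varphi-\varphi}{s}\right).
\]
By the definition \eqref{e.L.0} of $D(K)$, the functions $(1+|\cdot|)^{-1}(P_s\varphi-\varphi)/s$ converge pointwise to $(1+|\cdot|)^{-1}K\varphi$ and stay bounded in $C_{b,1}(H)$, i.e. $(P_s\varphi-\varphi)/s\stackrel{\pi}{\to}K\varphi$ in the weighted sense along any sequence $s_n\to 0^+$. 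Property (v) of Proposition \ref{p.L.2.1} then transports this $\pi$-limit through $P_t$, giving $P_t(P_s\varphi-\varphi)/s\to P_tK\varphi$ pointwise, while
\[
\left\|P_t\frac{P_s\varphi-\varphi}{s}\right\|_{0,1}\leq c_0e^{\omega_0 t}\sup_{s\in(0,1)}\left\|\frac{P_s\varphi-\varphi}{s}\right\|_{0,1}
\]
supplies the uniform bound demanded by \eqref{e.L.0}. Hence $P_t\varphi\in D(K)$ and $KP_t\varphi=P_tK\varphi$.

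For (ii) the delicate point is the left derivative, which is awkward without strong continuity, so I would avoid computing it directly. Using (i) we have $P_t\varphi\in D(K)$, and therefore the right difference quotient converges:
\[
\lim_{s\to 0^+}\frac{P_{t+s}\varphi(x)-P_t\varphi(x)}{s}=\lim_{s\to 0^+}\frac{P_s(P_t\varphi)(x)-P_t\varphi(x)}{s}=K(P_t\varphi)(x)=P_tK\varphi(x).
\]
Since $K\varphi\in C_{b,1}(H)$, Proposition \ref{p.L.2.1}(iii) makes $t\mapsto P_tK\varphi(x)$ continuous, and the same item makes $t\mapsto P_t\varphi(x)$ continuous. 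I would then invoke the elementary fact that a continuous function on $[0,\infty)$ possessing an everywhere-defined continuous right derivative is of class $C^1$, with derivative equal to that right derivative; this yields $(d/dt)P_t\varphi(x)=P_tK\varphi(x)$.

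For (iii), Proposition \ref{p.L.2.1}(i) gives $|P_tf(x)|\leq c_0e^{\omega_0 t}\|f\|_{0,1}(1+|x|)$, so for $\lambda>\omega_0$ the integral defining $R(\lambda,K)f(x)$ converges absolutely with $|R(\lambda,K)f(x)|\leq \frac{c_0}{\lambda-\omega_0}\|f\|_{0,1}(1+|x|)$, which is the asserted norm bound once membership in $C_{b,1}(H)$ is known. To settle the resolvent identity I set $g=R(\lambda,K)f$ and, using the kernel representation $P_sg(x)=\int_H g(y)\pi_s(x,dy)$ and Fubini, compute
\[
P_sg(x)=\int_0^\infty e^{-\lambda t}P_{s+t}f(x)\,dt=e^{\lambda s}\int_s^\infty e^{-\lambda r}P_rf(x)\,dr,
\]
so that
\[
\frac{P_sg(x)-g(x)}{s}=\frac{e^{\lambda s}-1}{s}\int_s^\infty e^{-\lambda r}P_rf(x)\,dr-\frac1s\int_0^s e^{-\lambda r}P_rf(x)\,dr.
\]
Letting $s\to 0^+$, using $P_0=I$ and the continuity of $r\mapsto P_rf(x)$ from Proposition \ref{p.L.2.1}(iii), the right-hand side tends pointwise to $\lambda g(x)-f(x)$ and is bounded in $\|\cdot\|_{0,1}$ for $s\in(0,1)$; by \eqref{e.L.0} this gives $g\in D(K)$ and $(\lambda I-K)R(\lambda,K)f=f$.

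I expect the main obstacle to be the two measure-theoretic interchanges hidden in (iii): proving $R(\lambda,K)f\in C_{b,1}(H)$ and justifying the exchange of $P_s$ with the time integral. For the former I would split $\int_0^\infty=\int_0^T+\int_T^\infty$, bounding the tail uniformly in $x,y$ via the exponential estimate and controlling $\int_0^T$ with the modulus of continuity from Proposition \ref{p.L.2.1}(i), which is uniform for $t\in[0,T]$; the latter follows from Fubini applied to the kernels $\pi_s(x,\cdot)$. By contrast, step (i) and the right-derivative computation in (ii) are routine once the $\pi$-convergence machinery and the bound $\|P_t\|_{\mathcal L(C_{b,1}(H))}\leq c_0e^{\omega_0 t}$ are in hand.
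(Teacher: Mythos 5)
Your argument is correct and follows essentially the same route as the paper, whose own proof is deliberately terse: it invokes \eqref{e.L.0} together with Proposition \ref{p.L.2.1} for (i)--(ii), computes the same norm bound for (iii), and defers the resolvent identity to the standard references \cite{Cerrai}, \cite{Priola}. What you have written out --- transporting the difference quotient through $P_t$ via the semigroup law and property (v), upgrading the continuous right derivative to a genuine $C^1$ statement, and the Laplace-transform computation $P_s R(\lambda,K)f = e^{\lambda s}\int_s^\infty e^{-\lambda r}P_r f\,dr$ with the ensuing limit $\lambda R(\lambda,K)f - f$ --- is precisely the standard argument those citations point to, with all interchanges correctly justified by the kernel representation and the bounds of Proposition \ref{p.L.2.1}.
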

\begin{proof}
(i). 
It is proved by taking into account \eqref{e.L.0} and (iii) of Proposition \ref{p.L.2.1}.\\
(ii). This follows easily by (i)  and by (iii) of Proposition \ref{p.L.2.1}. \\
(iii). By (i) of Proposition \ref{p.L.2.1} we have
\[
   \left\| \int_0^\infty e^{-\lambda t} {P}_tfdt \right\|_{0,1}\leq c_0\int_0^\infty e^{-(\lambda-\omega_0 )t}dt\| f \|_{0,1}=
 \frac{c_0\| f \|_{0,1}}{\lambda-\omega_0}.
\]
Finally, the fact that  $R(\lambda, {K})f\in D( {K})$ and $ (\lambda I- {K})R(\lambda, {K})f=f$ hold   can be proved in a standard way (see, for instance, \cite{Cerrai}, \cite{Priola}).
\end{proof}

\section{Proof of Theorem \ref{t.L.1}} \label{s.proof1}
In order to prove this theorem, we need some results about the transition semigroup $(P_t)_{t\geq0} $ in the space $C_b(H)$.
Denote by $\pi_t(x,\cdot)$ the image measure of the mild solution $X(t,x)$ of problem \eqref{e.L.3.1}.
Since for any $\varphi\in C_b(H)$ the representation 
\[
 P_t\varphi(x)=\int_H\varphi(y)\pi_t(x,dy),\quad x\in H,\, t\geq0
\]
holds (cf (ii) of Proposition \ref{p.L.2.1}) and $X(t,x)$ is  continuous in mean square, it follows easily that  $(P_t)_{t\geq0} $ is a semigroup of contraction operators in the space $C_b(H) $.  
Moreover, we have that for any $x\in H$, $\varphi\in C_b(H)$ the function $\Rset^+\to \Rset$, $t\mapsto P_t\varphi(x)$ is continuous (cf (iii) of Proposition \ref{p.L.2.1}).
This means that $(P_t)_{t\geq0} $ is {\em stochastically continous Markov semigroup}, in the sense introduced in \cite{Manca07}.

We denote by $(K,D(K,C_b(H))$ the infinitesimal generator of $P_t$ is the space $C_b(H)$, defined by
\begin{equation}  \label{e.L.0a}
\begin{cases}
\DS D(K,C_b(H))=\bigg\{ \varphi \in C_b(H): \exists g\in C_b(H), \lim_{t\to 0^+} \frac{P_t\varphi(x)-\varphi(x)}{t}=g(x),  \\ 
  \DS  \qquad\qquad\quad x\in H,\;\sup_{t\in(0,1)}\left\|\frac{P_t\varphi-\varphi}{t}\right\|_0<\infty \bigg\}\\   
   {}   \\
  \DS K\varphi(x)=\lim_{t\to 0^+} \frac{P_t\varphi(x)-\varphi(x)}{t},\quad \varphi\in D(K,C_b(H)),\,x\in H.
\end{cases}
\end{equation}
It is clear that $D(K,C_b(H))\subset D(K\sostituzioneA)$.
The key result we use to prove the Theorem is the following, proved in \cite{Manca07}
\begin{Theorem} \label{t.L.2.2}
  For any $\mu\in\mathcal M (H)$ there exists a unique family of measures $\{\mu_t\}_{t\geq0}\subset \mathcal M (H)$ 
such that
\begin{equation} \label{e.L.14a}
  \int_0^T|\mu_t|(H)dt <\infty,\quad \forall T>0
\end{equation}
and  
\begin{equation} \label{e.L.11a}
   \int_H\varphi(x)\mu_t(dx)- \int_H\varphi(x)\mu(dx)=\int_0^t\left( \int_HK\varphi(x)\mu_s(dx)   \right)ds
\end{equation}
 holds for any $t\geq0,\,\varphi\in D(K,C_b(H))$.
\end{Theorem}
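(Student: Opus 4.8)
The plan is to exhibit the solution explicitly and then prove uniqueness by a Laplace transform argument resting on the resolvent of $K$.

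\emph{Existence.} I would set $\mu_t:=P_t^*\mu$, i.e.\ $\int_H\varphi\,d\mu_t=\int_H P_t\varphi\,d\mu$ for $\varphi\in C_b(H)$. Since $(P_t)_{t\ge0}$ is a Markov semigroup of contractions on $C_b(H)$ (so $P_t\mathbf 1=\mathbf 1$), its adjoint leaves $\mathcal M(H)$ invariant and $|\mu_t|(H)\le|\mu|(H)$, whence \eqref{e.L.14a} holds with room to spare. To check \eqref{e.L.11a}, fix $\varphi\in D(K,C_b(H))$ and use the $C_b(H)$-analogue of Proposition \ref{p.L.2.9}(ii), which gives $P_t\varphi(x)-\varphi(x)=\int_0^t P_sK\varphi(x)\,ds$. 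Integrating in $x$ against $\mu$ and applying Fubini — legitimate because $K\varphi\in C_b(H)$ forces $|P_sK\varphi(x)|\le\|K\varphi\|_0$ while $\mu$ is finite — yields
\[ \int_H\varphi\,d\mu_t-\int_H\varphi\,d\mu=\int_0^t\Big(\int_H P_sK\varphi\,d\mu\Big)ds=\int_0^t\Big(\int_H K\varphi\,d\mu_s\Big)ds, \]
the last step being the identity $\int_H P_s g\,d\mu=\int_H g\,d\mu_s$ with $g=K\varphi$. This proves existence, with the bonus that the solution is $\{P_t^*\mu\}$.

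\emph{Uniqueness.} Given two solutions $\{\mu_t\}$, $\{\nu_t\}$, set $\eta_t:=\mu_t-\nu_t$; evaluating \eqref{e.L.11a} at $t=0$ gives $\eta_0=0$ (the class $D(K,C_b(H))$ is measure-determining, since it contains the resolvent images $R(\lambda,K)C_b(H)$ and the semigroup is stochastically continuous), and $\eta_t$ solves \eqref{e.L.11a} with $\mu=0$. Fixing $\lambda>0$, I would multiply the homogeneous equation by $e^{-\lambda t}$ and integrate over $(0,\infty)$; writing $\sigma_\lambda:=\int_0^\infty e^{-\lambda t}\eta_t\,dt$ and exchanging the order of integration on the right-hand side gives, for every $\varphi\in D(K,C_b(H))$,
\[ \int_H\varphi\,d\sigma_\lambda=\frac1\lambda\int_H K\varphi\,d\sigma_\lambda,\qquad\text{equivalently}\qquad\int_H(\lambda\varphi-K\varphi)\,d\sigma_\lambda=0. \]
By the resolvent identity (the $C_b(H)$-analogue of Proposition \ref{p.L.2.9}(iii)) every $f\in C_b(H)$ has the form $f=\lambda\varphi-K\varphi$ with $\varphi=R(\lambda,K)f\in D(K,C_b(H))$, so $\int_H f\,d\sigma_\lambda=0$ for all $f\in C_b(H)$; as $H$ is separable, $C_b(H)$ is measure-determining and hence $\sigma_\lambda=0$ for all large $\lambda$. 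Running $\psi$ through a countable measure-determining subset of $C_b(H)$, the scalar map $t\mapsto\int_H\psi\,d\eta_t$ has vanishing Laplace transform, so it is zero for a.e.\ $t$; a common null set then yields $\eta_t=0$ for a.e.\ $t$. Feeding this back into \eqref{e.L.11a} makes the right-hand side vanish for every $t$, so $\int_H\varphi\,d\eta_t=0$ for all $t$ and all $\varphi\in D(K,C_b(H))$, and the measure-determining property upgrades this to $\eta_t=0$ for every $t$.

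The step I expect to be the main obstacle is the integrability required to form $\sigma_\lambda$: condition \eqref{e.L.14a} only provides $\int_0^T|\eta_t|(H)\,dt<\infty$, whereas the Laplace transform over $(0,\infty)$ needs at least exponential control of $t\mapsto|\eta_t|(H)$. I plan to close this gap in one of two ways: either by first extracting such an a priori bound from the scalar relation obtained by testing with $\varphi=R(\lambda,K)f$, which on finite intervals reads $\int_H R(\lambda,K)f\,d\eta_t=-\int_0^t e^{\lambda(t-s)}\big(\int_H f\,d\eta_s\big)ds$; or by replacing the Laplace transform with a finite-horizon duality computation based on $s\mapsto P_{T-s}f$ for $f\in D(K,C_b(H))$, which produces $\int_H f\,d\eta_T=0$ directly and never leaves $[0,T]$. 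The only remaining care is the passage from the a.e.-in-$t$ statements to a genuine identity of measures, which is precisely where the separability of $H$ (for a single $t$-null set and for measure-determination) enters.
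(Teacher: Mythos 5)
First, a remark on the comparison you were asked for: the paper itself contains no proof of Theorem \ref{t.L.2.2} --- it is imported verbatim from the preprint \cite{Manca07} (``The key result we use to prove the Theorem is the following, proved in \cite{Manca07}''), so your argument has to stand on its own. The existence half does stand. Setting $\mu_t=P_t^*\mu$, the identity $P_t\varphi-\varphi=\int_0^tP_sK\varphi\,ds$ for $\varphi\in D(K,C_b(H))$ (the $C_b(H)$-analogue of Proposition \ref{p.L.2.9}(ii)) together with Fubini gives \eqref{e.L.11a}, and the contraction property gives \eqref{e.L.14a}; the only gloss is that ``the adjoint leaves $\mathcal M(H)$ invariant'' is not a consequence of $P_t\mathbf 1=\mathbf 1$ but of the kernel representation $P_t^*\mu(\Gamma)=\int_H\pi_t(x,\Gamma)\mu(dx)$ (the Banach-space adjoint a priori only lands in $(C_b(H))^*$); this is exactly the content of Lemma \ref{l.L.3} transposed to $C_b(H)$. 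Your observation that $D(K,C_b(H))$ is measure-determining, via $\lambda R(\lambda,K)f\to f$ pointwise and boundedly, is also correct and is needed below.

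The genuine gap is in uniqueness, and it is the one you flag yourself: under \eqref{e.L.14a} alone the transform $\sigma_\lambda=\int_0^\infty e^{-\lambda t}\eta_t\,dt$ need not exist, so the main route is broken, and of your two proposed repairs one fails and the other is left at its hardest point. The first repair cannot work as stated: the relation $\int_H R(\lambda,K)f\,d\eta_t=-\int_0^te^{\lambda(t-s)}\bigl(\int_Hf\,d\eta_s\bigr)ds$ controls only the pairings of $\eta_t$ against the range of $R(\lambda,K)$, i.e.\ the measure $R(\lambda,K)^*\eta_t$, and no pointwise-in-$t$ bound on $|\eta_t|(H)$ follows from it; trying to recover the total variation through $\int g\,d\eta_t=\lim_{\lambda\to\infty}\lambda\int R(\lambda,K)g\,d\eta_t$ produces the majorant $\lambda\int_0^te^{\lambda(t-s)}|\eta_s|(H)\,ds$, which blows up as $\lambda\to\infty$. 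The second repair (duality with $s\mapsto P_{T-s}f$) is the standard route, but its crux is precisely the unaddressed point: with $s\mapsto|\eta_s|(H)$ merely locally integrable, $G(s)=\int_HP_{T-s}f\,d\eta_s$ is not obviously absolutely continuous, and the telescoping/chain-rule step involves Riemann sums of an $L^1$ function evaluated at partition points, which requires Lebesgue-point or mollification arguments. The fix is that the relation you already displayed suffices when used differently: rearranged for fixed $T$ it reads
\begin{equation*}
\Bigl|\int_0^Te^{-\lambda s}\Bigl(\int_Hf\,d\eta_s\Bigr)ds\Bigr|\le\frac{\|f\|_0}{\lambda}\,e^{-\lambda T}|\eta_T|(H),
\end{equation*}
using $\|R(\lambda,K)f\|_0\le\lambda^{-1}\|f\|_0$ (contractivity). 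Thus the truncated Laplace transform of $F_f(s)=\int_Hf\,d\eta_s$ is $O(e^{-\lambda T}/\lambda)$, and the classical support theorem for Laplace transforms (the entire function $\lambda\mapsto e^{\lambda T}\int_0^Te^{-\lambda s}F_f(s)ds$ is of exponential type, bounded on the real and imaginary axes, hence constant by Phragm\'en--Lindel\"of and Liouville, and the constant is $0$ by Riemann--Lebesgue) gives $F_f=0$ a.e.\ on $[0,T]$; your countable measure-determining family, common null set, and feedback through \eqref{e.L.11a} then finish exactly as you describe. So the architecture is right, but the decisive analytic step --- converting the key scalar relation into $\eta_t\equiv0$ --- is missing, and it is not supplied by either of the two plans as written.
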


We split the proof into two lemmata.

\begin{Lemma} \label{l.L.3}
 The formula
\begin{equation} \label{e.L.11}
   \langle \varphi, {P}_t^*F\rangle_{\mathcal L(C_{b,1}(H),(C_{b,1}(H))^*)}=
    \langle  {P}_t\varphi,F\rangle_{\mathcal L(C_{b,1}(H),(C_{b,1}(H))^*)}
\end{equation}
defines a semigroup of linear operators in  $(C_{b,1}(H))^*$. 
Finally, ${P}_t^*: \mathcal M_1(H)\to\mathcal M_1(H)$ and it maps positive measures into positive measures.
\end{Lemma}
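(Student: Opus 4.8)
The plan is to separate the purely functional-analytic content from the measure-theoretic content. First I would observe that, by part (i) of Proposition \ref{p.L.2.1}, each $P_t$ is a bounded linear operator on $C_{b,1}(H)$; hence its transpose $P_t^*$, defined through \eqref{e.L.11}, is automatically a well-defined bounded linear operator on $(C_{b,1}(H))^*$ with $\|P_t^*\|=\|P_t\|_{\mathcal L(C_{b,1}(H))}$. The semigroup law then follows by dualizing the identity $P_t P_s=P_{t+s}$ of part (iv): for every $\varphi\in C_{b,1}(H)$ and every $F\in (C_{b,1}(H))^*$,
\[
\langle \varphi, P_t^* P_s^* F\rangle = \langle P_t\varphi, P_s^* F\rangle = \langle P_s P_t\varphi, F\rangle = \langle P_{t+s}\varphi, F\rangle,
\]
so that $P_t^* P_s^*=P_{t+s}^*$, while $P_0^*=I$ because $P_0=I$. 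This disposes of the first assertion.

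The substantive step is to show that $P_t^*$ preserves $\mathcal M_1(H)$ and positivity, and here I would exhibit a concrete measure representing $P_t^*\mu$. Recall first that $\mathcal M_1(H)$ embeds into $(C_{b,1}(H))^*$ via $\langle \varphi,\mu\rangle=\int_H\varphi\,d\mu$, the pairing being finite since $\left|\int_H\varphi\,d\mu\right|\le \|\varphi\|_{0,1}\int_H(1+|x|)\,|\mu|(dx)$. For $\mu\in\mathcal M_1(H)$ I would use the representation $P_t\varphi(x)=\int_H\varphi(y)\pi_t(x,dy)$ of part (ii) to write
\[
\langle \varphi, P_t^*\mu\rangle = \int_H P_t\varphi(x)\,\mu(dx) = \int_H\!\!\int_H \varphi(y)\,\pi_t(x,dy)\,\mu(dx),
\]
and then define $\nu_t(A):=\int_H \pi_t(x,A)\,\mu(dx)$ for $A\in\mathcal B(H)$. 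That $\nu_t$ is a well-defined finite signed measure follows because $x\mapsto\pi_t(x,A)$ is measurable (it is the transition kernel of the law of $X(t,x)$) and because $\int_H\pi_t(x,H)\,|\mu|(dx)=|\mu|(H)<\infty$, countable additivity passing through the integral by dominated convergence.

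To identify $P_t^*\mu$ with $\nu_t$ and to control its first moment, the one estimate doing all the work is \eqref{e.L.10a}, which gives $\int_H(1+|y|)\,\pi_t(x,dy)=1+\EE[|X(t,x)|]\le c(1+|x|)$. Combined with $\mu\in\mathcal M_1(H)$ this bounds the double integral above absolutely, so Fubini's theorem legitimizes the interchange of integration order and yields $\langle\varphi, P_t^*\mu\rangle=\int_H\varphi\,d\nu_t$ for all $\varphi\in C_{b,1}(H)$; thus $P_t^*\mu=\nu_t$. The same estimate, applied to $|\mu|$ through the pointwise bound $|\nu_t|\le \int_H\pi_t(x,\cdot)\,|\mu|(dx)$, gives
\[
\int_H |y|\,|\nu_t|(dy) \le \int_H \EE[|X(t,x)|]\,|\mu|(dx) \le c\int_H(1+|x|)\,|\mu|(dx) < \infty,
\]
so $\nu_t\in\mathcal M_1(H)$. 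Finally, positivity is immediate: if $\mu\ge 0$ then $\nu_t(A)=\int_H\pi_t(x,A)\,\mu(dx)\ge 0$ since each $\pi_t(x,\cdot)$ is a nonnegative probability measure.

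I expect the only delicate point to be the careful bookkeeping in the Fubini step, namely verifying the absolute integrability through \eqref{e.L.10a} and confirming that $\nu_t$ is genuinely a countably additive measure with finite first moment rather than merely a bounded linear functional on $C_{b,1}(H)$; the algebraic semigroup identity and the positivity claim are essentially immediate once the representation $P_t^*\mu=\nu_t$ is in hand.
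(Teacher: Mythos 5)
Your proof is correct and follows essentially the same route as the paper: both exhibit $P_t^*\mu$ concretely as the kernel-integrated measure $\Gamma\mapsto\int_H\pi_t(x,\Gamma)\,\mu(dx)$ and verify membership in $\mathcal M_1(H)$ through the moment estimate \eqref{e.L.10a}. The only difference is bookkeeping in the identification step: the paper first reduces to positive $\mu$ (using that $P_t^*$ preserves positivity of functionals) and identifies the measure by approximating $\varphi$ with simple functions and $|x|$ with bounded continuous functions, whereas you treat signed $\mu$ directly via Fubini, justified by the same estimate \eqref{e.L.10a}.
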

\begin{proof}
Fix $t\geq0$. 
By \eqref{e.L.10a} it follows that there exists $c>0$ such that $| {P}_t\varphi(x)|\leq c\|\varphi\|_{0,1}(1+|x|^{}) $, for any $\varphi \in C_{b,1}(H)$.
Then, if $F \in (C_{b,1}(H))^*$, we have
\[
     \left|\langle \varphi, {P}_t^*F\rangle_{\mathcal L(C_{b,1}(H),(C_{b,1}(H))^*)} \right|\leq c\|F\|_{(C_{b,1}(H))^*}\|\varphi\|_{0,1},
\]
for any $\varphi \in C_{b,1}(H)$. 
Since $ {P}_t^*$ is linear, it follows that $ {P}_t^*\in \mathcal L((C_{b,1}(H))^* )$.
Note that by (ii) of Proposition \ref{p.L.2.1} it follows ${P}_t\varphi\geq0$ for any $\varphi\geq0$. 
This  implies that if $\langle \varphi,F\rangle\geq0$ for any $\varphi\geq0$, then $\langle \varphi, {P}_t^*F\rangle\geq0$ for any $\varphi\geq0 $.
Hence, in order to check that     ${P}_t^*: \mathcal M_1(H)\to\mathcal M_1(H)$, it is sufficient to take $\mu$ positive. 
So, let $\mu \in \mathcal M_1(H)$ be   positive and consider the map 
\[
 \Lambda:\mathcal B(H)\to \Rset,\quad  \Gamma\mapsto \Lambda(\Gamma)=\int_H\pi_t(x,\Gamma)\mu(dx). 
\]
We recall that since $X(t,x)$ is continuous with respect to  $x$, for any $\Gamma\in \mathcal B(H)$ the map $H\to [0,1]$, $x\to \pi_t(x,\Gamma)$ is Borel, and consequently the formula above in meaningful.
It is straightforward to see that $\Lambda$ is a positive and finite Borel measure on $H$, namely $\Lambda\in \mathcal M(H)$. 
We now show $\Lambda=P_t^*\mu$.

Let us fix $\varphi\in C_b(H)$, and consider a sequence of simple Borel functions $(\varphi_n)_{n\in \Nset}$ which converges uniformly to $\varphi$ and such that $|\varphi_n(x)|\leq |\varphi(x)|$, $x\in H$.
For any $x\in H$ we have
\[
  \lim_{n\to\infty}\int_H \varphi_n(y)\pi_t(x,dy)=\int_H \varphi (y)\pi_t(x,dy)=P_t\varphi(x)
\]
and 
\[
    \sup_{x\in H}\left|\int_H \varphi_n(y)\pi_t(x,dy)   \right| \leq \|\varphi\|_0.
\]
Hence, by the dominated convergence theorem and by taking into account  that $\varphi_n$ is simple, we have
\begin{eqnarray*}
   \int_H\varphi(x)\Lambda(dx)&=&  \lim_{n\to\infty}\int_H\varphi_n(x)\Lambda(dx)=\lim_{n\to\infty}\int_H\left(\int_H \varphi_n(y)\pi_t(x,dy) \right)\mu(dx)    \\
&=& \int_H\left(\int_H \varphi (y)\pi_t(x,dy) \right)\mu(dx)= \int_H P_t\varphi(x)\mu(dx).
\end{eqnarray*}
This implies that $P_t^*\mu=\Lambda$ and consequently $P_t^*\mu\in \mathcal M(H)$. 

In order to show that  $P_t^*\mu\in \mathcal M_1(H)$, consider a sequence of functions $(\psi_n)_{n\in \Nset}\subset C_b(H)$ such that $\psi_n(x)\to |x|$ as $n\to \infty$ and $ \psi(x)\leq |x|$, for any $x\in H$.
By Proposition \ref{p.L.2.1} we have $\int_H\psi_n(y)\pi_t(x,dy)\to  \int_H|y|\pi_t(x,dy)$ as $n\to \infty$ and $\int_H\psi_n(y)\pi_t(x,dy)  \leq c(1+|x|)$, for any $x\in H$ and for some $c>0$.
Hence, since $\mu\in \mathcal M_1(H)$ we have
\begin{eqnarray*}
  &&\int_H |x|P_t^*\mu(dx)=\lim_{n\to\infty} \int_H \psi_n(x) P_t^*\mu(dx)\\
 &&\qquad=\lim_{n\to\infty} \int_H \left(\int_H \psi_n (y)\pi_t(x,dy) \right)\mu(dx) \leq \int_H c(1+|x|)\mu(dx)<\infty
\end{eqnarray*}
This concludes the proof.
\end{proof}

\begin{Lemma} \label{l.L.5.3}
For any  $\mu\in\mathcal M_1(H)$ there exists a unique family of finite measures $\{\mu_t\}_{t\geq0} \subset \mathcal M_1(H)$
fulfilling  \eqref{e.L.5b}, \eqref{e.L.8}, and this family is given by $\{ {P}_t^*\mu\}_{t\geq0} $.
\end{Lemma}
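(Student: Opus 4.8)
\emph{Overall plan.} I would split the statement into an existence part (that $\{P_t^*\mu\}_{t\ge0}$ is a solution) and a uniqueness part (that it is the only one), and for uniqueness I would reduce to the already-settled problem in $C_b(H)$, i.e. to Theorem \ref{t.L.2.2}. Throughout, the identity $\langle P_s\psi,\mu\rangle=\langle\psi,P_s^*\mu\rangle$ from \eqref{e.L.11} and the differentiability of $t\mapsto P_t\varphi(x)$ from Proposition \ref{p.L.2.9} are the two workhorses.

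\emph{Existence.} First I would check that $\mu_t:=P_t^*\mu$ solves \eqref{e.L.8}. By Lemma \ref{l.L.3} we already know $\{P_t^*\mu\}_{t\ge0}\subset\mathcal M_1(H)$. Fix $\varphi\in D(K)$. By (ii) of Proposition \ref{p.L.2.9} the map $t\mapsto P_t\varphi(x)$ is continuously differentiable with derivative $P_tK\varphi(x)$, hence
\[
  P_t\varphi(x)-\varphi(x)=\int_0^t P_sK\varphi(x)\,ds,\quad x\in H.
\]
I would integrate this against $\mu$ and exchange the order of integration. The exchange is legitimate because, by (i) of Proposition \ref{p.L.2.1}, $|P_sK\varphi(x)|\le c_0e^{\omega_0 s}\|K\varphi\|_{0,1}(1+|x|)$, which is integrable for $\mu(dx)\,ds$ on $H\times[0,t]$ since $\mu\in\mathcal M_1(H)$. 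Using \eqref{e.L.11} this gives
\[
  \langle\varphi,P_t^*\mu\rangle-\langle\varphi,\mu\rangle=\int_0^t\langle P_sK\varphi,\mu\rangle\,ds=\int_0^t\langle K\varphi,P_s^*\mu\rangle\,ds,
\]
which is precisely \eqref{e.L.8} with $\mu_t=P_t^*\mu$. Condition \eqref{e.L.5b} then follows from the bound $\int_H|x|\,d|P_t^*\mu|\le c_0e^{\omega_0 t}\int_H(1+|x|)\,d|\mu|$, which is integrable on every $[0,T]$.

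\emph{Uniqueness.} Here I would reduce to Theorem \ref{t.L.2.2}. Let $\{\mu_t\}_{t\ge0}\subset\mathcal M_1(H)$ be any family satisfying \eqref{e.L.5b} and \eqref{e.L.8}. Since $D(K,C_b(H))\subset D(K)$, equation \eqref{e.L.8} holds in particular for every $\varphi\in D(K,C_b(H))$, i.e. \eqref{e.L.11a} holds, and of course $\mathcal M_1(H)\subset\mathcal M(H)$. Thus, provided one knows that $\{\mu_t\}$ also fulfils \eqref{e.L.14a}, the family $\{\mu_t\}$ is an $\mathcal M(H)$-solution in the sense of Theorem \ref{t.L.2.2}. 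By the existence part, $\{P_t^*\mu\}_{t\ge0}$ is another such solution: it satisfies \eqref{e.L.11a} (being a solution of \eqref{e.L.8} for all $\varphi\in D(K)$), and it satisfies \eqref{e.L.14a} because $P_t^*$ preserves positivity (Lemma \ref{l.L.3}) and $P_t1=1$, so that, writing $\mu=\mu^+-\mu^-$, one has $|P_t^*\mu|(H)\le(P_t^*\mu^+)(H)+(P_t^*\mu^-)(H)=|\mu|(H)$ for every $t$. The uniqueness assertion of Theorem \ref{t.L.2.2} then forces $\mu_t=P_t^*\mu$ for all $t\ge0$.

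\emph{The main obstacle.} The one step that is not a routine verification is showing that an arbitrary solution satisfies \eqref{e.L.14a}, that is $\int_0^T|\mu_t|(H)\,dt<\infty$. Indeed \eqref{e.L.5b} controls only the first moment $\int_H|x|\,d|\mu_t|$, hence the mass of $\mu_t$ on $\{|x|>1\}$, while the total variation near the origin is not controlled a priori; testing \eqref{e.L.8} with $\varphi\equiv1\in D(K)$ (so $K\varphi=0$) only gives the \emph{signed} identity $\mu_t(H)=\mu(H)$, which does not bound $|\mu_t|(H)$. I would resolve this by exploiting that \eqref{e.L.8} is required to make sense for \emph{all} $\varphi\in D(K)$: since $D(K)$ contains the resolvent range $R(\lambda,K)\,C_{b,1}(H)$ (Proposition \ref{p.L.2.9}(iii)), which is rich enough to force $s\mapsto|\mu_s|(H)$ to be locally integrable, one recovers \eqref{e.L.14a} together with \eqref{e.L.5b}. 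I should note that a fully self-contained alternative is available, bypassing Theorem \ref{t.L.2.2}: taking Laplace transforms in \eqref{e.L.8}, writing $\langle\lambda\varphi-K\varphi,\rho_\lambda\rangle=\langle\varphi,\mu\rangle$ with $\rho_\lambda=\int_0^\infty e^{-\lambda t}\mu_t\,dt$, and inserting $\varphi=R(\lambda,K)f$ with $(\lambda I-K)R(\lambda,K)f=f$ identifies $\langle f,\rho_\lambda\rangle=\int_0^\infty e^{-\lambda t}\langle f,P_t^*\mu\rangle\,dt$ for all $f\in C_{b,1}(H)$, whence $\mu_t=P_t^*\mu$ after inverting along a countable separating family $\{e^{i\langle\cdot,h_n\rangle}\}$ and using continuity of $t\mapsto\langle\varphi,\mu_t\rangle$ for $\varphi\in D(K)$; however, this route additionally demands exponential-in-$t$ control of $|\mu_t|(H)$ for the improper transform to converge, which is why the reduction to Theorem \ref{t.L.2.2}, needing only local integrability, is preferable.
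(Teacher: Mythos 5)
Your proposal follows the paper's proof in both halves, so the approach is not genuinely different: your existence argument (integrate $P_t\varphi-\varphi=\int_0^tP_sK\varphi\,ds$ against $\mu$, justify Fubini by the bound $|P_sK\varphi(x)|\le c_0e^{\omega_0 s}\|K\varphi\|_{0,1}(1+|x|)$, then dualize via \eqref{e.L.11}) is the same calculation the paper performs by differentiating $t\mapsto\int_H\varphi\,dP_t^*\mu$ with dominated convergence; and your uniqueness argument is exactly the paper's reduction to Theorem \ref{t.L.2.2} through the inclusion $D(K,C_b(H))\subset D(K\sostituzioneA)$.

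The only point of divergence is the step you call the main obstacle, and there you are in fact more careful than the paper, which disposes of it with the single unproved sentence ``by \eqref{e.L.5b} follows that $\{\mu_t\}_{t\geq0}$ fulfils \eqref{e.L.14a}''. You are right that this is not a pure measure-theoretic implication: a family such as $\mu_t=t^{-1}(\delta_{x_t}-\delta_{y_t})$ with $0<|x_t|,|y_t|\le t^2$, $x_t\neq y_t$, satisfies \eqref{e.L.5b} but not \eqref{e.L.14a}, so any derivation must use equation \eqref{e.L.8} itself. However, your proposed repair is an assertion, not an argument: saying that the resolvent range $R(\lambda,K)C_{b,1}(H)$ is ``rich enough to force $s\mapsto|\mu_s|(H)$ to be locally integrable'' is the gap restated. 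What the equation actually yields is that $s\mapsto\int_Hf\,d\mu_s$ is locally integrable for every $f\in C_{b,1}(H)$ (write $f=\lambda R(\lambda,K)f-KR(\lambda,K)f$ and use that $t\mapsto\int_H\varphi\,d\mu_t$ is continuous and $t\mapsto\int_HK\varphi\,d\mu_t$ locally integrable for $\varphi\in D(K\sostituzioneA)$); upgrading this scalar integrability to local integrability of the total variation $t\mapsto|\mu_t|(H)$ is a weak-versus-strong integrability problem in the dual of the non-separable space $C_{b,1}(H)$, and neither your sketch nor the paper bridges it. The clean repair --- almost certainly the paper's intent, since its own existence computation bounds $\sup_{t\in[0,T]}\int_H(1+|x|)\,d|P_t^*\mu|$ --- is to read \eqref{e.L.5b} with the weight $1+|x|$, i.e.\ as local integrability of $t\mapsto\|\mu_t\|_{(C_{b,1}(H))^*}$; with that convention \eqref{e.L.14a} is immediate and both your argument and the paper's close without further work.
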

\begin{proof}
We first check that $\{ {P}_t^*\mu\}_{t\geq0} $ satisfies \eqref{e.L.5b}, \eqref{e.L.8}.
 By Proposition \ref{l.L.3}, for any $\mu\in \mathcal M_1(H)$,  formula \eqref{e.L.11} defines a family $\{ {P}_t^*\mu\}_{t\geq0}$ of measures  on $H$.
Moreover, by (i) of Proposition \ref{p.L.2.1} it follows that for any $T>0$ it holds
\[
 \sup_{t\in[0,T]}\| {P}_t^*\mu\|_{(C_{b,1}(H))^*}  \sup_{t\in[0,T]}\int_H(1+|x|)|P_t^*\mu|(dx) = <\infty.
\]
Hence, \eqref{e.L.5b} holds.
We now show \eqref{e.L.8}.
By (i), (ii), (iv) of Proposition \ref{p.L.2.1} and by the dominated convergence theorem it follows easily 
that for any  $\varphi \in C_{b,1}(H)$ the function
\begin{equation}  \label{e.L.7}
 \Rset^+ \to \Rset,\quad t\mapsto \int_H\varphi(x) {P}_t^*\mu(dx)
\end{equation}
is continuous.
Clearly, $ {P}_0^*\mu=\mu$.
Now we show that if $\varphi \in D( {K}\sostituzioneA)$ then the function \eqref{e.L.7} is differentiable.
Indeed, by taking into account \eqref{e.L.0} and (i) of Proposition \ref{p.L.2.9}, 
for any  $\varphi \in D( {K}\sostituzioneA)$  we can apply the dominated convergence theorem to obtain 
\begin{eqnarray*}
  &&\frac{d}{dt}\int_H \varphi(x) {P}_t^*\mu(dx)=\\
  &&\qquad = \lim_{h\to 0}\frac1h \left(\int_H  {P}_{t+h}\varphi(x)\mu(dx)- \int_H  {P}_t\varphi(x)\mu_t(dx)\right)  \\
&&\qquad=\lim_{h\to 0}\int_H \left(\frac{ {P}_{t+h}\varphi(x) -  {P}_t\varphi(x)}{h}\right)\mu(dx)\\
 &&\qquad =\lim_{h\to 0}\int_H  {P}_t\left(\frac{ {P}_h\varphi - \varphi}{h}\right)(x)\mu(dx)\\
 &&\qquad =\int_H \lim_{h\to 0}\left(\frac{ {P}_h\varphi - \varphi}{h}\right)(x) {P}_t^*\mu(dx)= 
  \int_H  {K}\varphi(x) {P}_t^*\mu(dx).
\end{eqnarray*}
Then, by arguing as above, it follows that the differential of the mapping defined by \eqref{e.L.7} is continuous. 
This clearly implies that   $\{ {P}_t^*\mu\}_{t\geq0}$ satisfies \eqref{e.L.8}.
In order to show uniqueness of such a solution, by the linearity of the problem it is sufficient to show that if $\mu=0$ and $\{\mu_t\}_{t\geq0}\subset \mathcal M_1(H) $ is a solution of equation \eqref{e.L.8}, then $\mu_t=0$, for any $t\geq0$.
Note that equation \eqref{e.L.8} holds in particular for $\varphi\in D(K,D(K))$ (cf \eqref{e.L.0a}) and consequently \eqref{e.L.11a} holds, for any $\varphi\in D(K,D(K))$.
Note also that by \eqref{e.L.5b} follows that  $\{  *\mu_t\}_{t\geq0}$ fulfils \eqref{e.L.14a}.
Hence, by Theorem \ref{t.L.2.2}, it follows that $\mu_t=0$, $\forall t\geq0$.
This concludes the proof. 
\end{proof}
\section{Proof of Theorem \ref{t.L.2} } \label{s.proof2}
We split the proof in several steps.
We start by studying the  Ornstein-Uhlenbeck operator in $C_{b,1}(H)$ that is, roughly speaking, the case $F=0$ in \eqref{e.4}.
In Proposition \ref{p.L.4.3} we shall prove Theorem \ref{t.L.1.4} in the case $F=0$.
Then, Corollary \ref{c.L.4.6} will show that $(K,D(K_0))$ is an extension of $K_0$ and $K\varphi=K_0\varphi$ for any $\varphi\in \mathcal E_A(H)$.
In order to complete the proof of the theorem, we  shall present several approximation results.
Finally, Lemma \ref{l.L.4.6}  will complete the proof. 

\subsection{The Ornstein-Uhlenbeck semigroup in $C_{b,1}(H)$} \label{s.L.OU}
An important role in what follows it is played by the {\em Ornstein-Uhlenbeck semigroup} $(R_t)_{t\geq0}$ in the space $C_{b,1}(H)$, defined by the formula
\[
   R_t\varphi(x)= \begin{cases}
                  \DS \varphi(x), &t=0,\\
                  \DS        \int_H \varphi(e^{tA}x+y)N_{Q_t}(dy),&   t>0
                         \end{cases}
\]
where $\varphi\in C_{b,1}(H)$, $x\in H$ and $N_{Q_t}$ is the gaussian measure of zero mean and covariance operator $Q_t$ (cf Hypothesis \ref{h.L.3.1}).
It is well known that the representation 
\begin{equation} \label{e.L.43b}
    R_t\varphi(x)=\EE\left[\varphi\left( e^{tA}x+\int_0^te^{(t-s)A}BdW(s) \right)\right] 
\end{equation}
holds, for any $t\geq0$, $\varphi\in C_{b,1}(H)$, $x\in H$.
Hence, the  Ornstein-Uhlenbeck semigroup $(R_t)_{t\geq0}$ is the transition semigroup \eqref{e.L.38a}  in the case $F=0$ in \eqref{e.L.3.1}. 
Consequently,    $(R_t)_{t\geq0}$ satisfies stamentes (i)--(v) of  Proposition \ref{p.L.2.1}.
It is well known the following identity
 \begin{equation} \label{e.L.43}
  R_t(e^{i\langle \cdot,h\rangle})(x)=e^{i\langle
e^{tA}x,h \rangle -\frac12 \langle Q_th,h\rangle},
\end{equation}
which   implies  $R_t: \mathcal E_A(H)\to \mathcal E_A(H)$, for any $t\geq0$.
We  define the infinitesimal generator $L:D(L\sostituzioneA)\to C_{b,1}(H)$ of $(R_t)_{t\geq0}$ in $C_{b,1}(H)$ as in 
\eqref{e.L.0}, with $L$ replacing ${K} $ and $R_t$ replacing ${P}_t$.

\begin{Theorem} \label{t.L.4.1}
 Let $(P_t)_{t\geq0}$ be the semigroup \eqref{e.L.38a} and let $(R_t)_{t\geq0}$ be the Ornstein-Uhlenbeck semigroup \eqref{e.L.43b}.
We denote by $(K,D(K\sostituzioneA)) $, $ (L,D(L\sostituzioneA)) $ the corresponding infinitesimal generators in $C_{b,1}(H)$.
Then we have  $D(L\sostituzioneA)\cap C_b^1(H)=D(K\sostituzioneA)\cap C_b^1(H) $ and $K\varphi=L\varphi+\langle D\varphi,F\rangle$, for any
$\varphi \in  D(L\sostituzioneA)\cap C_b^1(H)$.
\end{Theorem}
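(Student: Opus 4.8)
The plan is to reduce everything to comparing $P_t$ and $R_t$ on first-order data, exploiting that both semigroups arise from mild solutions driven by the same noise. Coupling $X(t,x)$ with the Ornstein--Uhlenbeck process $Z(t,x)=e^{tA}x+\int_0^te^{(t-s)A}B\,dW(s)$ through the same cylindrical Wiener process $W$, the mild formulation \eqref{e.L.4.3} yields the pathwise identity
\[
   X(t,x)-Z(t,x)=\int_0^te^{(t-s)A}F(X(s,x))\,ds=:Y_t(x),\quad \PP\text{-a.s.}
\]
Since $P_t\varphi(x)=\EE[\varphi(X(t,x))]$ and $R_t\varphi(x)=\EE[\varphi(Z(t,x))]$ depend only on the laws of $X(t,x)$ and $Z(t,x)$, I may compute their difference along this coupling.

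The core of the argument is the claim that for every $\varphi\in C_b^1(H)$ one has
\[
   \lim_{t\to0^+}\frac{P_t\varphi(x)-R_t\varphi(x)}{t}=\langle D\varphi(x),F(x)\rangle,\quad x\in H,
\]
together with the uniform bound $\sup_{t\in(0,1)}\|t^{-1}(P_t\varphi-R_t\varphi)\|_{0,1}<\infty$. To establish it, I first write, by the fundamental theorem of calculus,
\[
   P_t\varphi(x)-R_t\varphi(x)=\EE\left[\int_0^1\langle D\varphi(Z(t,x)+\theta Y_t(x)),Y_t(x)\rangle\,d\theta\right],
\]
and rescale $s=t\sigma$ to get $t^{-1}Y_t(x)=\int_0^1e^{t(1-\sigma)A}F(X(t\sigma,x))\,d\sigma$. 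The key step is to prove $t^{-1}Y_t(x)\to F(x)$ in $L^1(\Omega;H)$ as $t\to0^+$, combining the strong continuity of $e^{tA}$, the mean-square continuity of $s\mapsto X(s,x)$, the linear growth $|F(y)|\leq|F(0)|+\kappa|y|$, and the moment estimate \eqref{e.L.10a} as dominating function; the Remark following Proposition \ref{p.L.2.1} provides the $L^2$ bound that makes $t^{-1}Y_t(x)$ bounded in $L^2(\Omega;H)$ uniformly in $t\in(0,1)$. Since $D\varphi$ is bounded and uniformly continuous and $Z(t,x)+\theta Y_t(x)\to x$, I also get $D\varphi(Z(t,x)+\theta Y_t(x))\to D\varphi(x)$ boundedly and in probability. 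Splitting the inner product as $\langle D\varphi(\cdot)-D\varphi(x),t^{-1}Y_t(x)\rangle+\langle D\varphi(x),t^{-1}Y_t(x)\rangle$ and applying Cauchy--Schwarz to the first summand and $L^1$-convergence to the second, then integrating in $\theta$, yields the limit. The uniform $\|\cdot\|_{0,1}$ bound is immediate from $|t^{-1}(P_t\varphi(x)-R_t\varphi(x))|\leq\|D\varphi\|_0\,\EE[|t^{-1}Y_t(x)|]\leq c(1+|x|)$, again by \eqref{e.L.10a}.

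With this claim in hand, the theorem follows from the decomposition
\[
   \frac{P_t\varphi-\varphi}{t}=\frac{R_t\varphi-\varphi}{t}+\frac{P_t\varphi-R_t\varphi}{t}.
\]
For $\varphi\in C_b^1(H)$ the function $\langle D\varphi,F\rangle$ belongs to $C_{b,1}(H)$, because it is bounded by $c(1+|\cdot|)$ and $(1+|\cdot|)^{-1}\langle D\varphi,F\rangle$ is a product of bounded uniformly continuous functions (one checks that $(1+|\cdot|)^{-1}F$ is Lipschitz). Hence both requirements in the definition \eqref{e.L.0} of the generators, namely pointwise convergence to an element of $C_{b,1}(H)$ and uniform $\|\cdot\|_{0,1}$-boundedness of the difference quotients, transfer between the two semigroups. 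Thus, if $\varphi\in D(L\sostituzioneA)\cap C_b^1(H)$ the right-hand side converges in the required sense to $L\varphi+\langle D\varphi,F\rangle\in C_{b,1}(H)$, so $\varphi\in D(K\sostituzioneA)$ and $K\varphi=L\varphi+\langle D\varphi,F\rangle$; rearranging the same identity shows that $\varphi\in D(K\sostituzioneA)\cap C_b^1(H)$ implies $\varphi\in D(L\sostituzioneA)$. This proves both $D(L\sostituzioneA)\cap C_b^1(H)=D(K\sostituzioneA)\cap C_b^1(H)$ and the stated formula.

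I expect the main obstacle to be the rigorous justification of $t^{-1}Y_t(x)\to F(x)$: since $e^{tA}$ is only strongly continuous with norm possibly growing like $Me^{\omega t}$ and $F$ is unbounded, the convergence inside the time integral has to be controlled by dominated convergence against the moment estimates, and the passage to the limit under $\EE$ must be handled through uniform integrability and Cauchy--Schwarz rather than by crude pointwise bounds.
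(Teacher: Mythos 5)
Your proposal is correct and follows essentially the same route as the paper: couple $X(t,x)$ and the Ornstein--Uhlenbeck process $Z(t,x)$ through the same Wiener process, apply the Taylor formula along the segment joining them to get $P_t\varphi - R_t\varphi = \EE\bigl[\int_0^1\langle D\varphi(\cdot),\int_0^te^{(t-s)A}F(X(s,x))ds\rangle d\xi\bigr]$, deduce the limit $\langle D\varphi(x),F(x)\rangle$ and the uniform $\|\cdot\|_{0,1}$ bound of the difference quotients, and obtain the reverse inclusion by symmetry. The only difference is one of detail: you spell out the convergence $t^{-1}\int_0^te^{(t-s)A}F(X(s,x))\,ds\to F(x)$ in $L^1(\Omega;H)$ and the membership $\langle D\varphi,F\rangle\in C_{b,1}(H)$, steps the paper compresses into ``it follows easily.''
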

\begin{proof}
Let $X(t,x)$ be the mild solution of  equation \eqref{e.L.3.1} and let us set 
$$
    Z_A(t,x)=e^{tA}+\int_0^te^{(t-s)A}BdW(s).
$$ 
Take $\varphi \in D(L\sostituzioneA)\cap C_b^1(H)$.
By taking into account that  
$$
   X(t,x)=Z_A(t,x)+ \int_0^te^{(t-s)A}F(X(s,x))ds,
$$
by the Taylor formula we have that $\PP$-a.s. it holds
$$
  \varphi(Z_A(t,x))
   =\varphi(Z_A(t,x))-\varphi(X(t,x))+\varphi(X(t,x))
$$
$$ 
 =\varphi(X(t,x)) -\int_0^1\left\langle D\varphi(\xi Z_A(t,x)+(1-\xi)X(t,x)),\int_0^te^{(t-s)A}F(X(s,x))ds\right\rangle d\xi .
$$
Then we have
$$
   R_t\varphi(x)-\varphi(x) = \EE\big[\varphi(Z_A(t,x))\big]-\varphi(x)= P_t\varphi(x)-\varphi(x)
$$
$$
   -\EE\left[\int_0^1\left\langle D\varphi(\xi Z_A(t,x)+(1-\xi)X(t,x)),\int_0^te^{(t-s)A}F(X(s,x))ds\right\rangle d\xi \right].
$$
Since $\varphi\subset D(L\sostituzioneA)\cap C_b^1(H)$,
it follows easily that for any $x\in H$
$$
  \lim_{t\to 0^+} \frac{P_t\varphi(x)-\varphi(x)}{t} = L\varphi(x)+\langle D\varphi(x),F(x)\rangle
$$
and 
\[
   \sup_{t\in(0,1]}\bigg\|  \frac{P_t\varphi-\varphi}{t}\bigg\|_{0,1} 
\]
\[
 \leq \sup_{t\in(0,1)}  \bigg\|  \frac{R_t\varphi-\varphi}{t}\bigg\|_{0,1} +\\
\sup_{x\in H} \|D\varphi(x)\|_{\mathcal L(H)}  \sup_{x\in H}\frac{|F(x)|}{1+|x|}   <\infty,
\]
that implies $\varphi \in D(K\sostituzioneA)$ and $K\varphi=L\varphi+\langle D\varphi ,F \rangle$.
The opposite inclusion follows by interchanging the role of $R_t$ and $P_t$ in the Taylor formula.
\end{proof}
We need the following approximation result, proved in  \cite{Manca07}. 
\begin{Proposition} \label{p.L.6.2}
For any $\varphi\in C_b(H)$, there exists $m\in\Nset$ and an $m$-indexed sequence $(\varphi_{n_1,\ldots,n_m})_{n_1,\ldots,n_m\in\Nset}\subset \mathcal E_A(H)$ such that 
\begin{equation} \label{e.L.20}
  \lim_{n_1\to \infty}\cdots  \lim_{n_m\to \infty}\varphi_{n_1,\ldots,n_m}\stackrel{\pi}{=}\varphi.
\end{equation}
Moreover, if $\varphi\in C_b^1(H)$, we can choose the  sequence  in such a way that \eqref{e.L.20} holds and 
\[
   \lim_{n_1\to \infty}\cdots  \lim_{n_m\to \infty} \langle D \varphi_{n_1,\ldots,n_m},h\rangle \stackrel{\pi}{=}\langle D \varphi ,h\rangle,
\]
for any $h\in H$.
\end{Proposition}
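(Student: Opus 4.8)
The plan is to realize $\varphi$ as an iterated $\pi$-limit through three nested approximations (so $m=3$), arranged so that every member of the approximating family already belongs to $\mathcal E_A(H)$. The starting point is the remark that $D(A^*)$ is dense in $H$: indeed $A^*$ generates the adjoint semigroup $e^{tA^*}$ on the Hilbert space $H$, so its domain is dense. Running Gram--Schmidt on a countable dense subset of the subspace $D(A^*)$ therefore yields an orthonormal basis $(e_k)_{k\in\Nset}$ of $H$ with $e_k\in D(A^*)$ for all $k$. This choice is what makes the frequency vectors admissible without a separate approximation layer.

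First I would set up the outermost limit (index $n_1=N$) by projection. Let $P_N$ be the orthogonal projection onto $\mathrm{span}(e_1,\dots,e_N)$ and put $\varphi_N(x)=\varphi(P_Nx)$. Since $P_Nx\to x$ and $\varphi$ is continuous, $\varphi_N(x)\to\varphi(x)$ for each $x$, while $\|\varphi_N\|_0\le\|\varphi\|_0$; hence $\varphi_N\stackrel{\pi}{\to}\varphi$. Each $\varphi_N$ is cylindrical, $\varphi_N(x)=\psi_N(\langle x,e_1\rangle,\dots,\langle x,e_N\rangle)$ with $\psi_N\in C_b(\Rset^N)$, so the problem reduces to approximating a bounded uniformly continuous function on $\Rset^N$ by trigonometric polynomials in the $\pi$-sense.

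For the two inner limits (indices $n_2=L$, $n_3=n$) I would periodize and apply Fej\'er means. Fix a smooth cutoff $\eta_L(\xi)=\eta(\xi/L)$, equal to $1$ on $[-L,L]^N$, and let $\psi_{N,L}$ be the $P_L$-periodic extension of the compactly supported $C^1$ function $\psi_N\eta_L$, with period $P_L$ larger than the support. Its Fej\'er means $T_{L,n}$ converge uniformly to $\psi_{N,L}$ as $n\to\infty$, with $\|T_{L,n}\|_\infty\le\|\psi_{N,L}\|_\infty\le\|\psi_N\|_\infty\le\|\varphi\|_0$; and as $L\to\infty$, $\psi_{N,L}\to\psi_N$ pointwise (they agree on $[-L,L]^N$) with the same uniform bound. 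Setting $\varphi_{N,L,n}(x)=T_{L,n}(\langle x,e_1\rangle,\dots,\langle x,e_N\rangle)$, the frequencies of $T_{L,n}$ are integer multiples of $2\pi/P_L$, so each exponential equals $e^{i\langle x,h\rangle}$ with $h\in\mathrm{span}(e_1,\dots,e_N)\subset D(A^*)$; thus $\varphi_{N,L,n}\in\mathcal E_A(H)$, and the bounds above give $\lim_N\lim_L\lim_n\varphi_{N,L,n}\stackrel{\pi}{=}\varphi$.

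For the refinement when $\varphi\in C_b^1(H)$ one tracks the derivatives through the same three layers. The chain rule gives $\langle D\varphi_N(x),h\rangle=\langle D\varphi(P_Nx),P_Nh\rangle$, which converges to $\langle D\varphi(x),h\rangle$ with bound $\|D\varphi\|_0\,|h|$. In the inner layers the key point, and the step I expect to be the main obstacle, is the simultaneous $C^1$ control: since $\psi_N\eta_L$ is $C^1$ with compact support its periodization is $C^1$, differentiation commutes with the Fej\'er convolution, so $DT_{L,n}=\sigma_n\big(D(\psi_N\eta_L)\big)$ converges uniformly to $D(\psi_N\eta_L)$; because $\|D\eta_L\|_\infty=O(1/L)$ the spurious term $\psi_N\,D\eta_L$ is uniformly negligible, whence $D(\psi_N\eta_L)\to D\psi_N$ pointwise with a uniform bound. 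Feeding these through the finitely many coordinates yields $\langle D\varphi_{N,L,n},h\rangle\to\langle D\varphi,h\rangle$ in the iterated $\pi$-sense, completing the plan.
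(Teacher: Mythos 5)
Your proof is correct. There is, in fact, nothing in the paper to compare it against: the paper does not prove Proposition \ref{p.L.6.2} but quotes it as ``proved in \cite{Manca07}'', so your proposal in effect supplies the missing self-contained argument, and it follows what is the standard route for such $\pi$-density statements. Namely: reduce to cylindrical functions by projecting onto an orthonormal basis $(e_k)_{k\in\Nset}\subset D(A^*)$ (legitimate, since $A^*$ generates the adjoint semigroup on the Hilbert space $H$, hence $D(A^*)$ is dense and Gram--Schmidt applies, each $e_k$ staying in the linear subspace $D(A^*)$); then cut off, periodize and take Fej\'er means in finitely many variables, the frequency vectors automatically lying in $\mathrm{span}(e_1,\dots,e_N)\subset D(A^*)$, so that every approximant belongs to $\mathcal E_A(H)$. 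The three levels of pointwise convergence with uniform sup-norm bounds are all verified (uniform convergence of Fej\'er means of a continuous periodic function, eventual equality $\psi_{N,L}=\psi_N$ on compact sets, $\varphi(P_Nx)\to\varphi(x)$), and in the $C_b^1$ case the derivative is tracked correctly through the chain rule $\langle D\varphi_N(x),h\rangle=\langle D\varphi(P_Nx),P_Nh\rangle$, the commutation of differentiation with the Fej\'er convolution, and the $O(1/L)$ decay of $D\eta_L$. One cosmetic slip: in the first part of your argument $\psi_N\eta_L$ is only continuous, not $C^1$ as written (there $\varphi\in C_b(H)$ only); this is harmless, since Fej\'er means of a continuous periodic function already converge uniformly, and the $C^1$ regularity is needed, and available, only for the derivative statement.
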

Now we are able to prove the following
\begin{Proposition} \label{p.L.4.3}
 For any $\varphi \in \mathcal E_A(H)$ we have   
$\varphi\in D(L\sostituzioneA)$ and 
\begin{equation} \label{e.L.44}
     L\varphi(x)= \frac12\textrm{Tr}\big[BB^*D^2\varphi(x)\big]+\langle x,A^*D\varphi(x)\rangle,\quad x\in H.  
\end{equation}
The set $\mathcal E_A(H)$ is a $\pi$-core for 
$ (L,D(L\sostituzioneA))$, and for any $\varphi\in D(L\sostituzioneA)\cap C_b^1(H) $  there exists $m\in\Nset$ and an $m$-indexed sequence $(\varphi_{n_1,\ldots,n_m})_{n_1,\ldots,n_m\in\Nset}\subset \mathcal E_A(H)$ such that 
\begin{equation} \label{e.L.45} 
  \lim_{n_1\to \infty}\cdots  \lim_{n_m\to \infty}
     \frac{\varphi_{n_1,\ldots,n_m}}{1+|\cdot|}\stackrel{\pi}{=}\frac{\varphi}{1+|\cdot|},
\end{equation}
\begin{equation} \label{e.L.46} 
   \lim_{n_1\to \infty}\cdots  \lim_{n_m\to \infty} 
\frac{\frac12\textrm{Tr}\big[BB^*D^2\varphi_{n_1,\ldots,n_m}\big]+\langle \cdot ,A^*D\varphi_{n_1,\ldots,n_m}\rangle
}{1+|\cdot|} \stackrel{\pi}{=}\frac{L\varphi}{1+|\cdot|}.
\end{equation}
Finally, if $\varphi\in D(L\sostituzioneA)\cap C_b^1(H) $ we can choose the sequence in such a way that \eqref{e.L.45}, \eqref{e.L.46} hold and
\begin{equation} \label{e.L.47} 
\lim_{n_1\to \infty}\cdots  \lim_{n_m\to \infty} 
\langle D \varphi_{n_1,\ldots,n_m},h\rangle \stackrel{\pi}{=}\langle D \varphi ,h\rangle,
\end{equation}
for any $h\in H$.
\end{Proposition}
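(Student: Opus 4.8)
The plan is to prove the three assertions in sequence, relying on the explicit action \eqref{e.L.43} of $R_t$ on the exponentials together with the resolvent machinery of Proposition \ref{p.L.2.9}, which applies verbatim to $(R_t)_{t\geq0}$ since this is the semigroup \eqref{e.L.38a} with $F=0$. For the first assertion I would fix $h\in D(A^*)$ and $\varphi_h(x)=e^{i\langle x,h\rangle}$ and differentiate \eqref{e.L.43} at $t=0$. Because $\frac{d}{dt}\langle e^{tA}x,h\rangle|_{t=0}=\langle x,A^*h\rangle$ and $\frac{d}{dt}\langle Q_t h,h\rangle|_{t=0}=|B^*h|^2=\langle BB^*h,h\rangle$, the pointwise limit equals $\big(i\langle x,A^*h\rangle-\tfrac12\langle BB^*h,h\rangle\big)\varphi_h(x)$, which is exactly the right-hand side of \eqref{e.L.44} evaluated at $\varphi_h$. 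To place $\varphi_h$ in $D(L\sostituzioneA)$ I must also bound the difference quotient: writing $R_t\varphi_h(x)-\varphi_h(x)=\varphi_h(x)\big(e^{i\langle(e^{tA}-I)x,h\rangle-\frac12\langle Q_t h,h\rangle}-1\big)$ and using the elementary estimate $|e^{i\alpha-\beta}-1|\le|\alpha|+\beta$ for real $\alpha$ and $\beta\ge0$, together with $|\langle(e^{tA}-I)x,h\rangle|\le c\,t\,(1+|x|)$ (valid for $h\in D(A^*)$ by Hypothesis \ref{h.L.3.1}(i)) and $\langle Q_t h,h\rangle\le c\,t$ on $t\in(0,1)$, I obtain $\sup_{t\in(0,1)}\|(R_t\varphi_h-\varphi_h)/t\|_{0,1}<\infty$. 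Linearity then extends the membership and the formula \eqref{e.L.44} to all of $\mathcal E_A(H)$.

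For the $\pi$-core property my plan rests on the resolvent. Given $\varphi\in D(L\sostituzioneA)$ and $\lambda>\omega_0$, set $f=\lambda\varphi-L\varphi\in C_{b,1}(H)$, so that $\varphi=R(\lambda,L)f$. First I approximate $f$ by an $m'$-indexed sequence $f_{\vec n}\in\mathcal E_A(H)$ with $f_{\vec n}/(1+|\cdot|)\stackrel{\pi}{\to}f/(1+|\cdot|)$ (Proposition \ref{p.L.6.2}); by the $\pi$-continuity of the resolvent (Proposition \ref{p.L.2.1}(v) and its integral definition) this gives $R(\lambda,L)f_{\vec n}\stackrel{\pi}{\to}\varphi$ and $LR(\lambda,L)f_{\vec n}=\lambda R(\lambda,L)f_{\vec n}-f_{\vec n}\stackrel{\pi}{\to}L\varphi$ in the weighted sense. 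The decisive observation is that each $R(\lambda,L)f_{\vec n}=\int_0^\infty e^{-\lambda t}R_t f_{\vec n}\,dt$ is itself a weighted $\pi$-limit of its Riemann sums, and these lie in $\mathcal E_A(H)$ because $R_t$ maps $\mathcal E_A(H)$ into $\mathcal E_A(H)$ (by \eqref{e.L.43}) and $\mathcal E_A(H)$ is a linear space; the uniform weighted bound required for $\pi$-convergence follows from $\|R_t g\|_{0,1}\le c_0 e^{\omega_0 t}\|g\|_{0,1}$. Applying $L$ to such a Riemann sum and invoking the commutation $LR_t=R_t L$ (Proposition \ref{p.L.2.9}(i)) turns it into the Riemann sum of $\int_0^\infty e^{-\lambda t}R_t(Lf_{\vec n})\,dt=LR(\lambda,L)f_{\vec n}$, where $Lf_{\vec n}\in C_{b,1}(H)$ by the first part; hence both the functions and their $L$-images converge. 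Nesting the Riemann-sum index inside $\vec n$ produces the required $m$-indexed family in $\mathcal E_A(H)$, which proves the core property and, after inserting the explicit form \eqref{e.L.44}, the convergences \eqref{e.L.45} and \eqref{e.L.46}.

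For the gradient convergence \eqref{e.L.47} when $\varphi\in D(L\sostituzioneA)\cap C_b^1(H)$, my plan is to regularize by the semigroup itself: set $\varphi_s=R_s\varphi$ for $s>0$ and use $\varphi_s\stackrel{\pi}{\to}\varphi$, $L\varphi_s=R_s L\varphi\stackrel{\pi}{\to}L\varphi$, and the Ornstein--Uhlenbeck gradient identity $DR_s\psi=e^{sA^*}R_s(D\psi)$ (valid for $\psi\in C_b^1(H)$), which yields $D\varphi_s=e^{sA^*}R_s(D\varphi)\stackrel{\pi}{\to}D\varphi$ as $s\to0^+$. For fixed $s$, Proposition \ref{p.L.6.2} provides $\varphi^{(k)}\in\mathcal E_A(H)$ with $\varphi^{(k)}\stackrel{\pi}{\to}\varphi$ and $\langle D\varphi^{(k)},h\rangle\stackrel{\pi}{\to}\langle D\varphi,h\rangle$; since $R_s$ preserves $\mathcal E_A(H)$, the functions $R_s\varphi^{(k)}\in\mathcal E_A(H)$ satisfy $R_s\varphi^{(k)}\stackrel{\pi}{\to}\varphi_s$ and $DR_s\varphi^{(k)}=e^{sA^*}R_s(D\varphi^{(k)})\stackrel{\pi}{\to}D\varphi_s$ by the $\pi$-continuity of $R_s$. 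Here lies the main obstacle: to also obtain $LR_s\varphi^{(k)}=R_sL\varphi^{(k)}\stackrel{\pi}{\to}R_sL\varphi=L\varphi_s$ one cannot simply pass $L$ through the limit, since only first-order convergence of the $\varphi^{(k)}$ is available whereas $L$ is second order. I expect to resolve this by re-expressing $R_sL\psi$ via a Mehler-formula Gaussian integration by parts that lowers the differential order and rewrites it through $R_s\psi$ and $R_s(D\psi)$ alone, so that the first-order convergence of the $\varphi^{(k)}$ suffices; letting $s\to0^+$ as the outermost index and merging all the nested $\pi$-limits into a single $m$-indexed family then completes the proof. Keeping the three weighted convergences simultaneously valid along one multi-indexed sequence is the most delicate point of the whole argument.
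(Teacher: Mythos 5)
Your first two parts are essentially sound, and the second takes a genuinely different route from the paper. For \eqref{e.L.44} you differentiate \eqref{e.L.43} directly (the paper just cites Da Prato's book), which works. For the $\pi$-core you approximate $f=\lambda\varphi-L\varphi$ and pass through the resolvent $R(\lambda,L)$, then replace $R(\lambda,L)f_{\vec n}$ by Riemann sums, which lie in $\mathcal E_A(H)$ since $R_t$ preserves $\mathcal E_A(H)$; this mirrors the paper's proof of Lemma \ref{l.L.4.6} for $K$, whereas for $L$ the paper instead truncates $\varphi$ itself and uses the time averages $\varphi_{n_1,n_2,n_3,n_4}=\frac{1}{n_4}\sum_{k=1}^{n_4}R_{k/(n_1n_4)}\varphi_{n_2,n_3}$, so that $L$ of the approximant collapses to the difference quotient $n_1\bigl(R_{1/n_1}\varphi_{n_2,n_3}-\varphi_{n_2,n_3}\bigr)$. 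Both organizations are legitimate; one small repair on yours: Proposition \ref{p.L.6.2} only approximates functions in $C_b(H)$, and $f\in C_{b,1}(H)$ is unbounded, so you must first insert the truncation $f\mapsto n f/(n+|\cdot|^2)$ as an extra index, exactly as the paper does.

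The genuine gap is in your third part. You correctly identify the obstacle (only zeroth- and first-order convergence of the $\varphi^{(k)}$ is available, while $L$ is second order), but the tool you propose to overcome it is not available under Hypothesis \ref{h.L.3.1}. Rewriting $\mathrm{Tr}\bigl[BB^*D^2R_s\psi\bigr]$ through $\psi$ and $D\psi$ alone by Gaussian integration by parts in the Mehler formula requires the directions $e^{sA}Bh$ to belong to the Cameron--Martin space $Q_s^{1/2}(H)$ of $N_{Q_s}$; by Douglas' range-inclusion lemma this is equivalent to an estimate $e^{sA}BB^*e^{sA^*}\le c\,Q_s$, i.e.\ a null-controllability/strong Feller type hypothesis. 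The paper assumes nothing of the sort: Hypothesis \ref{h.L.3.1} only requires $Q_t$ to be trace class and allows degenerate $B$, for which $r\mapsto |B^*e^{rA^*}v|$ can concentrate near $r=s$ and no such constant $c$ exists. So the step ``re-express $R_sL\psi$ via $R_s\psi$ and $R_s(D\psi)$ alone'' fails in general, and with it your construction of the approximating sequence satisfying \eqref{e.L.46} together with \eqref{e.L.47}.

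The repair is to abandon the single smoothing operator $R_s$ and regularize, as in part two and as the paper does, by the Ces\`aro averages $\frac{1}{n_4}\sum_{k=1}^{n_4}R_{k/(n_1n_4)}\varphi^{(k)}$ with $\varphi^{(k)}\in\mathcal E_A(H)$ given by Proposition \ref{p.L.6.2} (here $\varphi\in C_b^1(H)$ is bounded, so no truncation is needed). Applying $L$ to such an average and using $LR_t=(d/dt)R_t$ (Proposition \ref{p.L.2.9}) yields, in the limit of the Riemann-sum index, the increment $n_1\bigl(R_{1/n_1}\varphi^{(k)}-\varphi^{(k)}\bigr)$, which converges under \emph{zeroth-order} convergence of $\varphi^{(k)}$ alone and tends to $L\varphi$ as $n_1\to\infty$ because $\varphi\in D(L)$; the gradients are handled by $\langle DR_t\varphi^{(k)},h\rangle\stackrel{\pi}{\to}\langle DR_t\varphi,h\rangle$, which only needs the first-order convergence supplied by Proposition \ref{p.L.6.2}. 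In this way \eqref{e.L.45}, \eqref{e.L.46} and \eqref{e.L.47} hold simultaneously along one nested sequence, with no smoothing assumption on the noise.
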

\begin{proof}
We recall that the proof of \eqref{e.L.44} may be found in \cite{DP04}, Remark 2.66 (in \cite{DP04} the result is proved for the semigroup $(R_t)_{t\geq0}$ in the space $C_{b,2}(H)$, but it is clear that the result holds also in $C_{b,1}(H)$).

 Here we give only a sketch of the proof, which is very similar to the proof given in \cite{Manca07}.
 Take $\varphi\in D(L\sostituzioneA)$. 
For any $n_2\in \Nset$, set $\varphi_{n_2}(x)= n_2\varphi(x)/(n_2+|x|^2)$.
Clearly, $\varphi_{n_2}\in C_b(H)$ and $(1+|\cdot|)^{-1}\varphi_{n_2}\stackrel{\pi}{\to}(1+|\cdot|)^{-1}\varphi$ as $n_2\to \infty$.
By Proposition \ref{p.L.6.2}, for any $n_2\in \Nset$ we fix a sequence\footnote{we assume that the sequence has only one index} $(\varphi_{n_2,n_3})_{n_3\in \Nset}\subset \mathcal E_A(H)$ such that $\varphi_{n_2,n_3}\stackrel{\pi}{\to}\varphi_{n_2}$ as $n_3\to \infty$.
Set now, for any $n_1,n_2,n_3,n_4\in \Nset$ 
\begin{equation} \label{e.L.48}
   \varphi_{n_1,n_2,n_3,n_4}(x)=\frac{1}{n_4}\sum_{k=1}^{n_4}R_{\frac{k}{n_1n_4}}\varphi_{n_2n_3}(x).
\end{equation}
Since for any $\varphi\in C_{b,1}(H)$, $x\in H$ the function $\Rset^+\to \Rset$, $t\mapsto R_t\varphi(x)$ is continuous, a straightforward computation shows that the sequence $(\varphi_{n_1,\ldots,n_4})$ fulfils \eqref{e.L.45}.
Similarly, we find that for any $x\in H$ it holds
\begin{eqnarray*}
  \lim_{n_1\to\infty}\lim_{n_2\to\infty}\lim_{n_3\to\infty}\lim_{n_4\to\infty}
   &&\frac12\textrm{Tr}\big[BB^*D^2\varphi_{n_1,n_2,n_3,n_4}(x)\big]+\langle x ,A^*D\varphi_{n_1,n_2,n_3,n_4}(x)\rangle 
\\
  &=& \lim_{n_1\to\infty}\lim_{n_2\to\infty}\lim_{n_3\to\infty}\lim_{n_4\to\infty}L\varphi_{n_1,n_2,n_3,n_4}(x)
\\
  &=&  \lim_{n_1\to\infty}\lim_{n_2\to\infty}\lim_{n_3\to\infty}n_1\int_0^{\frac{1}{n_1}}LR_t\varphi_{n_2,n_3}(x)dt
\\
 &=& \lim_{n_1\to\infty}\lim_{n_2\to\infty}\lim_{n_3\to\infty}n_1\left(R_{\frac{1}{n_1}}\varphi_{n_2,n_3}(x)-\varphi_{n_2,n_3}(x)  \right)
\\
  &=& \lim_{n_1\to\infty} \left(R_{\frac{1}{n_1}}\varphi (x)-\varphi (x)  \right)=L\varphi(x).
\end{eqnarray*}
Here we have used the continuity of $t\mapsto LR_t\varphi_{n_2,n_3}(x)$ and the fact that $LR_t\varphi_{n_2,n_3}(x)=(d/dt)R_t\varphi_{n_2,n_3}(x)$ (cf Proposition \ref{p.L.2.1} and Proposition \ref{p.L.2.9}).
The fact that any limit above is equibounded in $C_{b,1}(H)$ with respect to the corresponding index follows by the construction of $\varphi_{n_1,n_2,n_3,n_4}(x)$.
Hence, \eqref{e.L.46} follows.

If $\varphi\in D(L\sostituzioneA)\cap C_b^1(H) $, by Proposition \ref{p.L.6.2},  there exists a sequence\footnote{we assume that the sequence has only one index} $(\varphi_{n})_{n\in \Nset}\subset \mathcal E_A(H)$ such that $\varphi_{n}\stackrel{\pi}{\to}\varphi$ as $n\to \infty$ and  $\langle D\varphi_{n },h\rangle\stackrel{\pi}{\to}\langle D\varphi ,h\rangle$ as $n\to\infty$, for any $h\in H$.
Since  for any $t>0$, $n\in\Nset$ we have
\[
   \langle DR_t\varphi_n(x),h\rangle= \int_H \langle D\varphi_n(e^{tA}x+y),h\rangle N_{Q_t}(dy),\quad x\in H
\]
it follows  $\langle DR_t\varphi_{n },h\rangle\stackrel{\pi}{\to}\langle DR_t\varphi ,h\rangle$ as $n\to\infty$, for any $h\in H$.
Then, the claim follows by arguing as above.
\end{proof}
By Theorem \ref{t.L.4.1} and Proposition \ref{p.L.4.3} we have
\begin{Corollary} \label{c.L.4.6}
 $(K,D(K\sostituzioneA))$ is an extension of $K_0$ and for any 
    $\varphi \in \mathcal E_A(H)$ we have $\varphi\in D( {K}\sostituzioneA)$ and $  {K}\varphi=K_0\varphi$. 
\end{Corollary}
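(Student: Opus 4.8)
The plan is to read the Corollary off directly from Theorem \ref{t.L.4.1} and Proposition \ref{p.L.4.3}; no new estimates are needed, and the only thing to verify is that those two results genuinely apply to the functions at hand. First I would record the elementary regularity fact that $\mathcal E_A(H)\subset C_b^1(H)$: each generating function $x\mapsto e^{i\langle x,h\rangle}$ with $h\in D(A^*)$ is smooth with uniformly continuous and bounded Fréchet derivatives of every order, and $\mathcal E_A(H)$ is by definition the linear span of the real and imaginary parts of such functions; hence every $\varphi\in\mathcal E_A(H)$ is Fréchet differentiable with bounded uniformly continuous differential, i.e. $\varphi\in C_b^1(H)$.

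Next I would combine the two cited results. By Proposition \ref{p.L.4.3}, every $\varphi\in\mathcal E_A(H)$ belongs to $D(L)$ and satisfies
$L\varphi(x)=\tfrac12\textrm{Tr}\big[BB^*D^2\varphi(x)\big]+\langle x,A^*D\varphi(x)\rangle$.
Together with the first step this shows $\varphi\in D(L)\cap C_b^1(H)$. Theorem \ref{t.L.4.1} then gives the domain identity $D(L)\cap C_b^1(H)=D(K)\cap C_b^1(H)$, so in particular $\varphi\in D(K)$, along with the formula $K\varphi=L\varphi+\langle D\varphi,F\rangle$. Substituting the Ornstein--Uhlenbeck expression for $L\varphi$ reproduces precisely the right-hand side of \eqref{e.4}, so $K\varphi=K_0\varphi$ for every $\varphi\in\mathcal E_A(H)$, which is the second assertion of the Corollary.

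Finally, for the statement that $(K,D(K))$ is an extension of $K_0$, I would note that the chain of equalities above never used the special trigonometric form of $\varphi$: for any function $\varphi$ in the domain on which $K_0$ is defined, namely a regular $\varphi\in D(L)\cap C_b^1(H)$ for which $L\varphi$ is represented by the Ornstein--Uhlenbeck expression, Theorem \ref{t.L.4.1} yields $\varphi\in D(K)$ and $K\varphi=L\varphi+\langle D\varphi,F\rangle=K_0\varphi$. Thus $K$ agrees with $K_0$ throughout the domain of $K_0$, which is exactly what "extension" means. I do not expect any genuine obstacle in this argument: the entire content of the Corollary is already packaged in Theorem \ref{t.L.4.1} and Proposition \ref{p.L.4.3}, and the only mildly nontrivial point is the smoothness inclusion $\mathcal E_A(H)\subset C_b^1(H)$ that licenses the use of the domain identity from Theorem \ref{t.L.4.1}.
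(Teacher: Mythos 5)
Your proposal is correct and follows exactly the paper's route: the paper derives Corollary \ref{c.L.4.6} precisely by combining Theorem \ref{t.L.4.1} with Proposition \ref{p.L.4.3}, which is what you do, and your added verification that $\mathcal E_A(H)\subset C_b^1(H)$ is the right (implicit) detail that licenses the domain identity. Your reading of the ``extension'' claim via the same chain of equalities also matches the paper's (unstated) intent, so nothing is missing.
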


\subsection{Approximation of $F$ with smooth functions}
It is convenient to introduce an auxiliary Ornstein--Uhlenbeck
semigroup
$$
   U_t\varphi(x)=\int_H\varphi(e^{tS}x+y)N_{\frac{1}{2}\;S^{-1}(e^{2t S}-1)}(dy),\quad \varphi\in C_b(H)
$$
where $S:D(S)\subset H\to H$ is a self--adjoint negative definite operator such 
that $S^{-1}$ is of trace class.
We notice  that $U_t$ is strong Feller, and  for any $t>0$, $\varphi\in C_b(H)$,  $U_t\varphi$ is infinite times differentiable with bounded differentials (see \cite{DP04}).
We introduce a   regularization 
of $F$  by setting
\begin{equation*}    
\langle F_n(x),h\rangle= 
\int_H\left\langle F\left(e^{\frac{1}{n} 
S}x+y\right),e^{\frac{1}{n} S}h\right\rangle N_{\frac{1}{2}\;S^{-1}(e^{\frac{2}{n} S}-1)}(dy),\quad n\in \Nset.
\end{equation*}
It is easy to check that $F_n$ is 
infinite times differentiable, with first differential bounded by $\kappa$, for any $n\in \Nset$. 
Moreover, $F_n(x)\to F(x)$ as $n\to\infty$ for all $x\in
H$ and $|F_n(x)|\leq |F(x)|$, for all $n\in\Nset$, $x\in
H$.

Let $P_t^n$ be the transition semigroup
\begin{equation} \label{e.L.17}
  P_t^n\varphi(x)=\EE[\varphi(X^n(t,x))],\quad \varphi\in C_{b,1}(H)
\end{equation}
where $X^n(t,x)$   is the solution of \eqref{e.L.3.1} with $F_n$ replacing $F$.
It is easy to check   
\[
    \lim_{n\to \infty }  \EE\bigl[ |  X^n(t,x)-X(t,x)|^2 \bigr]=0,\quad t\geq0,\,x\in H
\]
and 
\begin{equation*} 
   \EE\bigl[|X^n(t,x)|   \bigr]\leq   \EE\bigl[|X (t,x)|   \bigr], 
     \quad t\geq0,\, x\in H,
\end{equation*}
where $c_0>0$, $\omega_0\in \Rset$ are as in Proposition \ref{p.L.2.1}. 
This implies
\begin{equation} \label{e.L.28}
  \lim_{n\to\infty}  \frac{P_t^n\varphi}{1+|\cdot|} \stackrel{\pi}{=} \frac{P_t \varphi}{1+|\cdot|},
\end{equation}
for any $t\geq0$, $\varphi\in C_{b,1}(H)$.
We denote by $(K_n,D(K_n\sostituzioneA))$ the infinitesimal generator of the semigroup $P_t^n$ in $C_{b,1}(H)$, defined as in \eqref{e.L.0} with $K_n$ replacing $K$ and $P^n_t$ replacing $P_t$.
We recall that all the statements of Proposition \ref{p.L.2.1}, Theorem \ref{t.L.2.2}  hold also for $P_t^n$ and $(K_n,D(K_n\sostituzioneA))$. 
We also recall that the resolvent of $(K,D(K\sostituzioneA))$ in $C_{b,1}(H)$ is defined for any $\lambda>\omega_0$ by the formula $R(\lambda,K)f(x)=\int_0^\infty e^{-\lambda t}P_tf(x)dt$, $f\in C_{b,1}(H)$, $x\in H$ (cf Theorem \ref{t.L.2.2}). 
Similarly, for a fixed $n\in \Nset$ the resolvent of $(K_n,D(K_n\sostituzioneA))$ in $C_{b,1}(H)$ at $\lambda>0$  is defined by the same formula with  $P_t^n$ replacing $P_t$.   
Since  \eqref{e.L.28} holds, it is straightforward to see that
\begin{equation} \label{e.L.19}
   \lim_{n\to\infty} \frac{R(\lambda,K_n)\varphi}{1+|\cdot|} \stackrel{\pi}{=}\frac{R(\lambda,K )\varphi}{1+|\cdot|},
\end{equation}
for any $\varphi\in C_{b,1}(H)$, $\lambda>\omega_0$. %
%
%
%
%

%
%


The following proposition follows by Corollary 4.9 of \cite{Manca07} and by the fact that $\|DF_n\|\leq \kappa$, for any $n\in \Nset$.

\begin{Proposition} \label{P.L.7.2}
For any $n\in \Nset$, let $(K_n,D(K_n) )$ be the infinitesimal generator of the semigroup \eqref{e.L.17}.
Then for any $\lambda> \max\{0, \omega+M\kappa\} $, the resolvent $R(\lambda,K_n) $ of $K_n$ at $\lambda$ maps $C_b^1$ into $C_b^1(H)$ and it holds
\begin{equation} \label{e.L.43a}
  \|DR(\lambda,K_n)f\|_{C_b(H;H)}\leq \frac{M\|Df\|_{C_b(H;H)}}{\lambda-(\omega+M\kappa)},\quad f\in C_b^1(H).
\end{equation}
\end{Proposition}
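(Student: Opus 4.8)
The plan is to establish the gradient bound first at the level of the semigroup $P_t^n$ and then to integrate in $t$ in order to transfer it to the resolvent. Since $F_n$ is infinitely differentiable with $\|DF_n\|_{C_b(H;H)}\le\kappa$, the mild solution $X^n(t,x)$ of \eqref{e.L.3.1} (with $F_n$ in place of $F$) is Gâteaux differentiable in $x$, and for $h\in H$ the directional derivative $\eta^h(t,x)=D_x X^n(t,x)h$ solves the mild variational equation
\[
\eta^h(t,x)=e^{tA}h+\int_0^te^{(t-s)A}DF_n(X^n(s,x))\eta^h(s,x)\,ds.
\]
First I would estimate $|\eta^h(t,x)|$: using $\|e^{tA}\|_{\mathcal L(H)}\le Me^{\omega t}$ and $\|DF_n\|\le\kappa$, the quantity $u(t)=e^{-\omega t}|\eta^h(t,x)|$ satisfies, pathwise, $u(t)\le M|h|+M\kappa\int_0^t u(s)\,ds$, so Gronwall's lemma yields
\[
|\eta^h(t,x)|\le M|h|\,e^{(\omega+M\kappa)t},\qquad t\ge0,\ x\in H.
\]

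Next I would differentiate under the expectation. For $f\in C_b^1(H)$ one expects $\langle DP_t^n f(x),h\rangle=\EE\big[\langle Df(X^n(t,x)),\eta^h(t,x)\rangle\big]$, the interchange of $D_x$ and $\EE$ being justified by the (deterministic) dominating bound just obtained together with dominated convergence. Taking absolute values and inserting the previous estimate gives the semigroup gradient bound
\[
\|DP_t^n f\|_{C_b(H;H)}\le Me^{(\omega+M\kappa)t}\|Df\|_{C_b(H;H)},\qquad t\ge0.
\]

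Finally, since on $C_b(H)$ the semigroup $P_t^n$ acts as a contraction, for $\lambda>\max\{0,\omega+M\kappa\}$ the resolvent $R(\lambda,K_n)f=\int_0^\infty e^{-\lambda t}P_t^n f\,dt$ is well defined in $C_b(H)$. Differentiating under the integral sign, which is legitimate because $e^{-\lambda t}\|DP_t^n f\|_{C_b(H;H)}\le Me^{-(\lambda-\omega-M\kappa)t}\|Df\|_{C_b(H;H)}$ is integrable in $t$, I would deduce that $R(\lambda,K_n)f\in C_b^1(H)$ (boundedness of the differential from the estimate, continuity from continuity of $x\mapsto DP_t^nf(x)$ and dominated convergence) with
\[
\langle DR(\lambda,K_n)f(x),h\rangle=\int_0^\infty e^{-\lambda t}\langle DP_t^nf(x),h\rangle\,dt,
\]
whence \eqref{e.L.43a} follows from the elementary computation $\int_0^\infty Me^{-(\lambda-\omega-M\kappa)t}\,dt=M/(\lambda-(\omega+M\kappa))$.

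The step I expect to be the main obstacle is the rigorous justification of the differentiability of the stochastic flow $x\mapsto X^n(t,x)$ and of the representation $\langle DP_t^nf(x),h\rangle=\EE[\langle Df(X^n(t,x)),\eta^h(t,x)\rangle]$: one must check that the mild variational equation genuinely produces the (mean-square) $x$-derivative of $X^n$ and that differentiation can be carried through the expectation uniformly in $x$. This is precisely the content borrowed from \cite{Manca07} (Corollary 4.9); the remaining ingredients — Gronwall's lemma, the contraction property on $C_b(H)$, and differentiation under the integral sign — become routine once the uniform bound $\|DF_n\|\le\kappa$ is available, which is exactly what makes the constants in \eqref{e.L.43a} independent of $n$.
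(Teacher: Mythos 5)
Your argument is correct, and it is in fact more informative than what the paper itself provides: the paper does not prove Proposition \ref{P.L.7.2} at all, but disposes of it in one line by invoking Corollary 4.9 of \cite{Manca07} together with the observation that $\|DF_n\|\leq\kappa$ uniformly in $n$. Your proof reconstructs what that citation hides: the mild first-variation equation for $\eta^h(t,x)=D_xX^n(t,x)h$, the Gronwall estimate $|\eta^h(t,x)|\leq Me^{(\omega+M\kappa)t}|h|$ (your computation with $u(t)=e^{-\omega t}|\eta^h(t,x)|$ is exactly right), the representation $\langle DP_t^nf(x),h\rangle=\EE\bigl[\langle Df(X^n(t,x)),\eta^h(t,x)\rangle\bigr]$ giving $\|DP_t^nf\|_{C_b(H;H)}\leq Me^{(\omega+M\kappa)t}\|Df\|_{C_b(H;H)}$, and finally differentiation of $R(\lambda,K_n)f=\int_0^\infty e^{-\lambda t}P_t^nf\,dt$ under the integral sign, which is licit precisely when $\lambda>\max\{0,\omega+M\kappa\}$ and produces the constant $M/(\lambda-(\omega+M\kappa))$ of \eqref{e.L.43a}. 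What your route buys is transparency on the point that actually matters for the sequel (Lemma \ref{l.L.4.6}): the bound is uniform in $n$ because only $\|e^{tA}\|_{\mathcal L(H)}\leq Me^{\omega t}$ and $\|DF_n\|\leq\kappa$ ever enter, never any quantity depending on $F_n$ itself. Two small points would need to be written out in a complete version: first, the differentiability of the flow $x\mapsto X^n(t,x)$ and the validity of the variational equation is itself a theorem (this is where the smoothness of $F_n$ is used; it is the standard result from \cite{DPZ92} that both you and, implicitly, the cited corollary lean on); second, membership of $R(\lambda,K_n)f$ in $C_b^1(H)$ requires a Fr\'echet differential that is \emph{uniformly} continuous, not merely a continuous G\^ateaux derivative — this follows by the same dominated-convergence scheme, since $Df$ is uniformly continuous and $X^n(t,\cdot)$, $\eta^h(t,\cdot)$ depend Lipschitz-continuously on $x$ with constants locally integrable against $e^{-\lambda t}$, but it deserves a sentence. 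Neither is a gap in the approach, and you flag the first one explicitly yourself.
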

Corollary \ref{c.L.4.6} shows that  ${K}$ is an extension of $K_0$ and that ${K}\varphi=K_0\varphi$, $\forall \varphi \in \mathcal E_A(H)$.
So, in view of the fact that ${K} {P}_t\varphi= {P}_tK_0\varphi$ for any $\varphi \in  \mathcal E_A(H)$ (cf (i) of Proposition \ref{p.L.2.9}), it is not difficult to check that $\{ {P}^*_t\mu\}_{t\geq0}$ fulfils \eqref{e.L.5b}, \eqref{e.L.2.5a}.
Now, let $\mu\in \mathcal M_1(H)$ and assume that $\{\mu_t\}_{t\geq0} \subset \mathcal M_1(H)$ fulfils 
\eqref{e.L.5b}, \eqref{e.L.2.5a}. 
In view of Theorem \ref{l.L.5.3}, to prove that $\mu_t= {P}^*_t\mu$, for any $t\geq0$, it is sufficient to show that $\{\mu_t\}_{t\geq0} $ is also a solution of \eqref{e.L.8}.
In order to do this,  we need an approximation result.
\begin{Lemma} \label{l.L.4.6}
The set $\mathcal E_A(H)$ is a $\pi$-core for $(K,D(K))$, and for  any $\varphi \in D( {K}\sostituzioneA)$ there exist $m\in \Nset$   and an  $m$-indexed sequence $(\varphi_{n_1,\ldots,n_m})\subset \mathcal E_A(H)$ such that
\begin{eqnarray} \label{e.L.23}
 \lim_{n_1\to\infty} \cdots \lim_{n_m\to\infty}  \frac{\varphi_{n_1,\ldots,n_m}}{1+|\cdot|^{}}
  &\stackrel{\pi}{=}& \frac{\varphi }{1+|\cdot|^{}}, \\
 \lim_{n_1\to\infty} \cdots \lim_{n_m\to\infty}  \frac{K_0\varphi_{n_1,\ldots,n_m}}{1+|\cdot|^{}}
  &\stackrel{\pi}{=}& \frac{ {K}\varphi }{1+|\cdot|^{}}. \label{e.L.23a}
\end{eqnarray} 
\end{Lemma}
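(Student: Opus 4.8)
The plan is to reduce the statement to the smooth case $\varphi\in D(K)\cap C_b^1(H)$ and then to strip the smoothness assumption away by means of the regularized resolvents $R(\lambda,K_n)$, keeping careful track of the order in which the various $\pi$-limits are taken.

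First I would treat $\varphi\in D(K)\cap C_b^1(H)$. By Theorem \ref{t.L.4.1} this set equals $D(L)\cap C_b^1(H)$, so Proposition \ref{p.L.4.3} already furnishes an $m$-indexed sequence $(\varphi_{n_1,\ldots,n_m})\subset\mathcal E_A(H)$ satisfying \eqref{e.L.45}, \eqref{e.L.46} and the gradient convergence \eqref{e.L.47}. Since for $\psi\in\mathcal E_A(H)$ one has $K_0\psi=\tfrac12\mathrm{Tr}[BB^*D^2\psi]+\langle\cdot,A^*D\psi\rangle+\langle D\psi,F\rangle$, I would split $K_0\varphi_{n_1,\ldots,n_m}$ into the Ornstein--Uhlenbeck part, which $\pi$-converges (with weight) to $L\varphi$ by \eqref{e.L.46}, and the drift part $\langle D\varphi_{n_1,\ldots,n_m},F\rangle$. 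For the latter, fixing $x$ and choosing $h=F(x)$ in \eqref{e.L.47} gives pointwise convergence to $\langle D\varphi(x),F(x)\rangle$, while the equiboundedness of the gradients built into the sequence of Proposition \ref{p.L.4.3} together with the linear growth $|F(x)|\le|F(0)|+\kappa|x|$ yields the required weighted sup-bound. Hence $K_0\varphi_{n_1,\ldots,n_m}/(1+|\cdot|)\stackrel{\pi}{\to}(L\varphi+\langle D\varphi,F\rangle)/(1+|\cdot|)=K\varphi/(1+|\cdot|)$, the last equality being Theorem \ref{t.L.4.1}. This settles \eqref{e.L.23}, \eqref{e.L.23a} for smooth $\varphi$.

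For a general $\varphi\in D(K)$, fix $\lambda>\max\{0,\omega+M\kappa,\omega_0\}$ and write $\varphi=R(\lambda,K)f$ with $f=\lambda\varphi-K\varphi\in C_{b,1}(H)$. After a cut-off bringing $f$ into $C_b(H)$, Proposition \ref{p.L.6.2} produces a multi-indexed sequence $(f_\beta)\subset\mathcal E_A(H)$ with $f_\beta/(1+|\cdot|)\stackrel{\pi}{\to}f/(1+|\cdot|)$. For each $\beta$ and each $n$ the function $\psi_{\beta,n}:=R(\lambda,K_n)f_\beta$ lies in $C_b^1(H)$ by Proposition \ref{P.L.7.2}, and in $D(K_n)\cap C_b^1(H)=D(L)\cap C_b^1(H)=D(K)\cap C_b^1(H)$ by the analogue of Theorem \ref{t.L.4.1} for $K_n$ (whose proof is identical, $F_n$ being Lipschitz and the Ornstein--Uhlenbeck part unchanged). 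Thus the smooth case applies to each $\psi_{\beta,n}$, yielding inner $\mathcal E_A(H)$-approximants whose $K_0$-values $\pi$-converge to $K\psi_{\beta,n}$. It then remains to let $n\to\infty$ and $\beta$ run: by \eqref{e.L.19} and the $\pi$-continuity of the resolvent (an integrated form of (v) of Proposition \ref{p.L.2.1}) one gets $\psi_{\beta,n}\stackrel{\pi}{\to}R(\lambda,K)f_\beta$ as $n\to\infty$ and then $\stackrel{\pi}{\to}\varphi$ as $\beta$ runs, all in the weighted $\pi$-sense.

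For the generator I would combine the resolvent identity with Theorem \ref{t.L.4.1}, which for $\psi_{\beta,n}\in D(K_n)\cap C_b^1(H)$ give $K\psi_{\beta,n}=K_n\psi_{\beta,n}+\langle D\psi_{\beta,n},F-F_n\rangle=\lambda\psi_{\beta,n}-f_\beta+\langle D\psi_{\beta,n},F-F_n\rangle$. The main obstacle is exactly this cross term: I expect it to vanish in the weighted $\pi$-sense as $n\to\infty$, and this is where the uniform estimate $\|D\psi_{\beta,n}\|_{C_b(H;H)}\le M\|Df_\beta\|_{C_b(H;H)}/(\lambda-(\omega+M\kappa))$ of Proposition \ref{P.L.7.2} is indispensable: being independent of $n$, it combines with $F_n(x)\to F(x)$ and $|F-F_n|\le 2|F|\le c(1+|\cdot|)$ to force $\langle D\psi_{\beta,n},F-F_n\rangle/(1+|\cdot|)\stackrel{\pi}{\to}0$. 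Consequently $K\psi_{\beta,n}\stackrel{\pi}{\to}\lambda R(\lambda,K)f_\beta-f_\beta=K(R(\lambda,K)f_\beta)$ as $n\to\infty$, and then $\stackrel{\pi}{\to}\lambda\varphi-f=K\varphi$ as $\beta$ runs. Concatenating the inner $\mathcal E_A(H)$-indices with $n$ and $\beta$ in the nested order $\lim_\beta\lim_n\lim_{\mathrm{inner}}$ produces a single multi-indexed sequence in $\mathcal E_A(H)$ realizing \eqref{e.L.23}, \eqref{e.L.23a}. The only genuinely delicate point is the bookkeeping: since $\pi$-convergence forbids diagonalization, the limits must be taken strictly in this order, and one must check the weighted sup-bounds survive at each level.
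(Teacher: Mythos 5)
Your proposal is correct and follows essentially the same route as the paper's proof: both write $\varphi=R(\lambda,K)f$, approximate $f$ by smooth functions, pass through the regularized resolvents $R(\lambda,K_n)$ and use the uniform gradient bound of Proposition \ref{P.L.7.2} to kill the cross term $\langle D\cdot,F-F_n\rangle$, and conclude with the $\mathcal E_A(H)$-approximation of Proposition \ref{p.L.4.3}, taking the limits in the same nested order. The only differences are organizational --- you isolate the smooth case $D(K)\cap C_b^1(H)$ as a preliminary step and approximate $f$ within $\mathcal E_A(H)$ rather than merely $C_b^1(H)$ --- and these do not change the substance of the argument.
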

\begin{proof}
{\bf Step 1}.
 Let\footnote{the assumpion $\lambda > \omega_0$ is necessary to define the resolvent of ${K}$ (cf Proposition \ref{p.L.2.9})}
 $\varphi \in D( {K}\sostituzioneA)$, $\lambda > \max\{0,\omega_0,\omega+M\kappa\}$ and set $f=\lambda\varphi- {K}\varphi$.
We fix a sequence $(f_{n_1})\subset C_b^1(H)$ such that  
\[
   \lim_{ n_1 \to\infty}\frac{f_{n_1}}{1+|\cdot|^{}}\stackrel{\pi}{=}\frac{f }{1+|\cdot|^{}}.
\]
Set $\varphi_{n_1}=R(\lambda, {K})f_{n_1}$.
By Proposition \ref{p.L.2.9} it follows
\begin{equation}   \label{e.L.24}
    \lim_{n_1\to\infty}   \frac{\varphi_{n_1} }{1+|\cdot|^{}}\stackrel{\pi}{=} \frac{\varphi }{1+|\cdot|^{}}, \quad \lim_{n_1\to\infty}\frac{K\varphi_{n_1} }{1+|\cdot|^{}}\stackrel{\pi}{=} \frac{ {K}\varphi }{1+|\cdot|^{}}.  
\end{equation}
{\bf Step 2}.
Now set $\varphi_{n_1,n_2}=R(\lambda,K_{n_2})f_{n_1}$,
where $K_{n_2}$ is the infinitesimal generator of the semigroup $P_t^{n_2}$, introduced in \eqref{e.L.17}.
Since $ f_{n_1}\in C_b^1(H)$, by Proposition \ref{P.L.7.2} we have $\varphi_{n_1,n_2}\in C_b^1(H)$ and 
\begin{equation} \label{e.L.25}
    \sup_{n_2\in \Nset} \|D\varphi_{n_1,n_2}\|_{C_b(H;H)}\leq \frac{M\|Df_{n_1}\|_{C_b(H;H)}}{\lambda-(\omega+M\kappa)},
\end{equation}
for any $ n_1\in \Nset$.
Moreover, by \eqref{e.L.19}  it holds
\begin{eqnarray}  
    \lim_{n_2\to\infty} \varphi_{n_1,n_2} \stackrel{\pi}{=}  \varphi_{n_1}, \quad
    \lim_{n_2\to\infty}K_{n_2}\varphi_{n_1,n_2}\stackrel{\pi}{=}K\varphi_{n_1}, \label{e.L.26}
\end{eqnarray} 
for any $n_1\in\Nset$.
Since $\varphi_{n_1,n_2}\in D(K_{n_2}\sostituzioneA)\cap C_b^1(H)$, by Theorem \ref{t.L.4.1} we have
\[
   K_{n_2}\varphi_{n_1,n_2}=L\varphi_{n_1,n_2}+\langle D\varphi_{n_1,n_2 },F_{n_2}\rangle.
\]
{\bf Step 3}.
By Proposition \ref{p.L.4.3}, for any $n_1,n_2$ there exists a sequence $ (\varphi_{n_1,n_2,n_3})\subset \mathcal E_A(H)$ (we still assume that it has only one index) such that
\begin{equation} \label{e.L.35}
  \lim_{n_3\to\infty}\varphi_{n_1,n_2,n_3}\stackrel{\pi}{=}\varphi_{n_1,n_2},\quad
  \lim_{n_3\to\infty}\frac{L\varphi_{n_1,n_2,n_3}}{1+|\cdot|}\stackrel{\pi}{=}\frac{L\varphi_{n_1,n_2}}{1+|\cdot|}
\end{equation}
and 
\[
     \lim_{n_3\to\infty}   \langle D\varphi_{n_1,n_2,n_3},h\rangle\stackrel{\pi}{=}
         \langle D\varphi_{n_1,n_2 },h\rangle.
\]
for any $h\in H$.
Notice that the since $F_{n_2}$ is globally Lipschitz, it follows
\[
     \lim_{n_3\to\infty} \frac{ \langle D\varphi_{n_1,n_2,n_3},F_{n_2}\rangle}{1+|\cdot|} \stackrel{\pi}{=}
        \frac{\langle D\varphi_{n_1,n_2 },F_{n_2}\rangle}{1+|\cdot|}.
\]
This, togheter with \eqref{e.L.35}, implies that the sequence  $ (\varphi_{n_1,n_2,n_3})$ fulfils
\[
  \lim_{n_3\to\infty}\varphi_{n_1,n_2,n_3}\stackrel{\pi}{=}\varphi_{n_1,n_2},\quad
  \lim_{n_3\to\infty}\frac{ K_{n_2}\varphi_{n_1,n_2,n_3}}{1+|\cdot|}\stackrel{\pi}{=}\frac{K_{n_2}\varphi_{n_1,n_2}}{1+|\cdot|}.
\]
Since ${K}$ is an extension of $K_0$ (cf Corollary \ref{c.L.4.6}), we have
\[
     {K}\varphi_{n_1,n_2,n_3}=K_0\varphi_{n_1,n_2,n_3}=K_{n_2}\varphi_{n_1,n_2,n_3}+
    \langle D\varphi_{n_1,n_2,n_3},F-F_{n_2}\rangle
\]
for any $n_1,n_2,n_3 \in \Nset$.
So we find
\begin{equation}\label{e.L.27}
  \lim_{n_3\to\infty}  \frac{K_0\varphi_{n_1,n_2,n_3}}{1+|\cdot|^{}}\stackrel{\pi}{=}\frac{K_{n_2}\varphi_{n_1,n_2 }+
    \langle D\varphi_{n_1,n_2 },F-F_{n_2}\rangle}{1+|\cdot|^{}},
\end{equation}
for any $ n_1,n_2 \in \Nset$.
Moreover, by \eqref{e.L.25}, we see that 
\[
     \frac{|\langle D\varphi_{n_1,n_2 }(x),F(x)-F_{n_2}(x)\rangle|}{1+|x|^{}}\leq \frac{M\|Df_{n_1}\|_{C_b(H;H)}}{\lambda-(\omega+M\kappa)}\frac{|F(x)-F_{n_2}(x)|}{1+|x|^{}} 
\]
and consequently 
\[
    \lim_{n_2\to\infty}   \frac{ \langle D\varphi_{n_1,n_2 },F-F_{n_2}\rangle}{1+|\cdot|^{}}\stackrel{\pi}{=}0.
\]
This, togheter with \eqref{e.L.26} implies
\begin{equation}\label{e.L.28a}
   \lim_{n_2\to\infty}\frac{K_{n_2}\varphi_{n_1,n_2 }+
    \langle D\varphi_{n_1,n_2 },F-F_{n_2}\rangle}{1+|\cdot|^{}}\stackrel{\pi}{=}\frac{K\varphi_{n_1}}{1+|\cdot|^{}}.
\end{equation}
Finally, by taking into account \eqref{e.L.24}, \eqref{e.L.27}; \eqref{e.L.28a}, the sequence $(\varphi_{n_1,n_2,n_3})\subset \mathcal E_A(H) $ fulfils 
\[
    \lim_{n_1\to\infty}\lim_{n_2\to\infty}\lim_{n_3\to\infty} \frac{\varphi_{n_1,n_2,n_3}}{1+|\cdot|^{}}
      \stackrel{\pi}{=}\frac{\varphi }{1+|\cdot|^{}},
\]
\[
   \lim_{n_1\to\infty}\lim_{n_2\to\infty}\lim_{n_3\to\infty}\frac{K_0\varphi_{n_1,n_2,n_3}}{1+|\cdot|^{}}
      \stackrel{\pi}{=}\frac{ {K}\varphi }{1+|\cdot|^{}}.
\]
This concludes the proof. 
\end{proof}
\section{Proof of Theorem \ref{t.L.1.4}} \label{s.proof3}
Let $\varphi\in D( {K})$ and assume that 
$(\varphi_n)_{n\in\Nset} \subset \mathcal E_A(H)$ fulfils 
\eqref{e.L.23}, \eqref{e.L.23a} (for simplicity the assume that this sequence has only one index: this does not change the generality of the proof).
For any $t \geq0$ we find
\begin{eqnarray*}
   \int_H\varphi(x)\mu_t(dx)-\int_H\varphi(x)\mu (dx)&=&\lim_{n\to\infty}
   \left( \int_H\varphi_n(x)\mu_t(dx)-\int_H\varphi_n(x)\mu (dx)\right)\\
  &=&  \lim_{n\to\infty}\int_0^t\left( \int_HK_0\varphi_n(x)\mu_s(dx)\right)ds,
\end{eqnarray*}
since $\varphi_n\in D( {K})$ and ${K}\varphi_n=K_0\varphi_n$, for any $n\in \Nset$ (cf Corollary \ref{c.L.4.6}).
Now observe that since $\sup_{n\in\Nset}|K_0\varphi_n(x)|\leq c(1+|x|^{}) $ for some $c>0$ and since $\mu_s\in \mathcal M_1(H)$
for any $s\geq0$, it holds
$$
\lim_{n\to\infty}\int_HK_0\varphi_n(x)\mu_s(dx)=\int_H  {K}\varphi(x)\mu_s(dx)
$$
and 
$$
\sup_{n\in\Nset}\left|\int_HK_0\varphi_n(x)\mu_s(dx)\right|\leq c\int_H(1+|x|^{})|\mu_s|(dx).
$$
Hence, by taking into account \eqref{e.L.5b} 
we can apply the dominated convergence theorem to obtain 
$$
    \lim_{n\to\infty}\int_0^t\left( \int_HK_0\varphi_n(x)\mu_s(dx)\right)ds=
   \int_0^t\left( \int_H  {K}\varphi(x)\mu_s(dx)\right)ds
$$
So, $\{\mu_t\}_{t\geq0}$ is also a solution of the measure equation for $(  {K},D( {K}\sostituzioneA))$.
Since by Theorem \ref{t.L.1} such a solution is unique and it is given by $\{P_t^*\mu\}_{t\geq0}$, 
we have $\int_H\varphi(x)P_t^*\mu(dx)=\int_H\varphi(x)\mu_t(dx)$, for any $\varphi\in \mathcal E_A(H)$.
By taking into account that the set $\mathcal E_A(H)$ is $\pi$-dense in $C_b(H)$ (cf. Proposition \ref{p.L.6.2}), we have
 $\int_H\varphi(x)P_t^*\mu(dx)=\int_H\varphi(x)\mu_t(dx)$, for any $\varphi\in   C_b(H)$.
this clearly implies $P_t^*\mu=\mu_t$, $\forall t\geq0$.
This concludes the proof.

\section{The reaction-diffusion case} \label{s.application}

We shall consider here the stochastic heat equation perturbed by a polynomial term  of odd degree $d>1$ having
negative leading coefficient (this will ensures non explosion). 
We shall  represent this 
 polynomial as
$$
\lambda \xi-p(\xi),\quad \xi\in \Rset,
$$
where $\lambda\in \Rset$ and $p$ is an increasing polynomial, that is    $p'(\xi)\geq 0$ for all $\xi\in \Rset.$

We   set $H=L^2(\mathcal O)$ where $\mathcal O=[0,1]^n$, $n\in \Nset$, and denote by  $\partial \mathcal O$ the
boundary of $\mathcal O$.  
We are concerned with the following stochastic differential  equation with Dirichlet boundary conditions
\begin{equation} \label{e.RD.4.1}
\left\{\begin{array}{lll}
dX(t,\xi)=[\Delta_\xi X(t,\xi)+\lambda X(t,\xi) -p(X(t,\xi))]dt+BdW(t,\xi),\quad \xi\in \mathcal O,\\
\\
X(t,\xi)=0,\quad t\ge 0,\; \xi\in \partial \mathcal O,\\
\\
X(0,\xi)=x(\xi),\quad \xi\in \mathcal O,\;x\in H,
\end{array}\right.
\end{equation}
where $\Delta_\xi$ is the Laplace operator,  $B\in \mathcal L(H)$ and  $W$  is a cylindrical Wiener process in a 
stochastic basis
$(\Omega,\mathcal F, (\mathcal F_t)_{t\geq0}, \PP)$ in $H.$  We choose  $W$ of the form
$$
W(t,\xi)=\sum_{k=1}^\infty e_k(\xi)\beta_k(t),\quad \xi\in \mathcal O,\;t\ge 0,
$$
where $\{e_k\}$ is a complete orthonormal system in $H$ and  $\{\beta_k\}$    a sequence  of mutually 
independent
standard Brownian motions on   $(\Omega,\mathcal F, (\mathcal F_t)_{t\geq0}, \PP)$.

Let us write problem \eqref{e.RD.4.1} as a stochastic differential equation
  in the Hilbert space $H$.
 For this we denote by $A$ the realization of the Laplace operator with
Dirichlet boundary conditions,
\begin{equation}   \label{e.RD.4.3}
  \left\{\begin{array}{lll}
          Ax=\Delta_\xi x,\quad x\in D(A),\\
           \\
    \DS {D(A)=H^2(\mathcal O)\cap H^1_0(\mathcal O).}
        \end{array}\right.
\end{equation}
  $A$ is self--adjoint and has a complete orthonormal system of eigenfunctions, namely
$$
   e_k(\xi)=(2/\pi)^{n/2}\;\sin (\pi k_1\xi_1)\cdots (\sin \pi k_n\xi_n),
$$
where $k=(k_1,...,k_n),$ $k_i\in \Nset$.
For any $x\in H$ we   set $x_k= \langle x,e_k  \rangle,\, k\in \Nset^n. $
Notice that
$$
    A e_k=-\pi^2|k|^2,\quad k\in \Nset^n,\;  |k|^2=k^2_1+\cdots+k^2_n. 
$$
 Therefore, we have
\begin{equation}  \label{e.RD.4.4}
     \|e^{tA}\|\le e^{-\pi^2t},\quad t\ge 0.
\end{equation}
\begin{Remark}   \label{r4.1}
{\em We can also consider the realization of the Laplace operators with Neumann boundary conditions
\[
 \left\{\begin{array}{lll}
    Nx=\Delta_\xi x,\quad x\in D(N),\\
      \\
   \DS{D(N)=\left\{x\in H^2(\mathcal O):\;\frac{\partial x}{\partial \eta}\;=0\;\mbox{\rm on}\;\partial\mathcal O\right\}}
     \end{array}\right.
\]
where $\eta$ represents the outward normal to $\partial\mathcal O$.
Then
$$
N f_k=-\pi^2|k|^2 f_k,\quad k\in (\Nset\cap\{0\})^n,
$$
where
$$
f_k(\xi)=(2/\pi)^{n/2}\;\cos (\pi k_1\xi_1)\cdots (\cos \pi k_n\xi_n),
$$
  $k=(k_1,...,k_n),$ $k_i\in \Nset\cup\{0\}$ and $|k|^2=k^2_1+\cdots+k^2_n$.
}
\end{Remark}

Concerning the operator $B$ we shall assume, for the sake of simplicity (\footnote{
All following results remain true taking $B=G(-A)^{-\gamma/2}$ with $G\in\mathcal L(H)$.}), that $B=(-A)^{-\gamma/2}$,    
 where
\begin{equation}   \label{e.RD.4.11}
      \gamma>\frac n2 -1.
\end{equation}

Now, setting $X(t)=X(t,\cdot)$ and $W(t)=W(t,\cdot),$ we shall write problem \eqref{e.RD.4.1}
as
\begin{equation}
\label{e.RD.4.41}
\left\{\begin{array}{lll}
dX(t)=[AX(t)+F(X(t))]dt+(-A)^{-\gamma/2}dW(t),\\
\\
X(0)=x.
\end{array}\right.
\end{equation}
where $F$ is the mapping
$$
F:D(F)=L^{2d}(\mathcal O)\subset H\to H,\;x(\xi)\mapsto \lambda \xi-p(x(\xi)).
$$
It is well known that for any $x\in L^{2d}(\mathcal O)$ problem \eqref{e.RD.4.41} has a  unique mild solution  $(X(t,x))_{t\geq0,x\in H}$ (see, for instance, \cite{Cerrai01}, \cite{DP04}), fulfilling
\begin{equation} \label{e.RD.6}
 X(t,x)= e^{tA}x+\int_0^te^{(t-s)A}BdW(s)+\int_0^te^{(t-s)A}F(X(s,x))ds
\end{equation}
for any $t\geq0$.
Finally, it  is well known that for any $T>0$ there exists $c>0$ such that
\begin{equation}   \label{e.RD.6a}
  \sup_{t\in[0,T]}  \EE\left[ |X(t,x)|_{L^{2d}(\mathcal O)}^{d} \right] \leq c 
         \left(1+|x|_{L^{2d}(\mathcal O)}^{d}\right).
\end{equation}
\begin{equation}   \label{e.RD.6b}
|X(t,x)-X(t,y)|\leq e^{(\lambda-\pi^2)t}|x-y|,
\end{equation}
see \cite[Theorem 4.8]{DP04}.
\subsection{Main results}
We consider here the  Kolmogorov operator
\begin{equation} \label{e.RD.4}
 K_0\varphi(x)= \frac12\textrm{Tr}\big[BB^*D^2\varphi(x)\big]+\langle x,A^*D\varphi(x)\rangle+\langle D\varphi(x),F(x)\rangle,\,x\in L^{2d}(\mathcal O).
\end{equation}
We are interested in extending the results of Theorems \ref{t.L.1}, \ref{t.L.2}, \ref{t.L.1.4} to this operator. 
This will be done in Theorems \ref{t.RD.1}, \ref{t.RD.2}, \ref{t.RD.1.4} respectively.

Denote by $C_{b,d}(L^{2d}(\mathcal O))$ the space of all   functions $\varphi:L^{2d}(\mathcal O)\to \Rset$ such that the function
\[
 L^{2d}(\mathcal O)\to\Rset,\quad x\to \frac{\varphi(x)}{1+|x|_{L^{2d}(\mathcal O)}^d} 
\]
is uniformly continuous and bounded. 
The space $C_{b,d}(L^{2d}(\mathcal O))$, endowed with the norm 
\[
   \|\varphi\|_{C_{b,d}(L^{2d}(\mathcal O))}=\sup_{x\in L^{2d}(\mathcal O)} \frac{|\varphi(x)|}{1+|x|_{L^{2d}(\mathcal O)}^{d}} 
\]
is a Banach space.
Thanks to estimates \eqref{e.RD.6a} and \eqref{e.RD.6b} we can define a semigroup of transitional operators in $C_{b,d}(L^{2d}(\mathcal O))$, by the formula
\begin{equation} \label{e.RD.7}
    {P}_t\varphi(x)= \EE\bigl[\varphi( X(t,x))   \bigr],\quad t\geq0,\,\varphi\in C_{b,d}(L^{2d}(\mathcal O)),\, x\in L^{2d}(\mathcal O),
\end{equation}
see Proposition \ref{p.RD.3.1}.
We define its infinitesimal generator by setting
\begin{equation}  \label{e.RD.0}
\begin{cases}
\DS D( {K})=\bigg\{ \varphi \in C_{b,d}(L^{2d}(\mathcal O)): \exists g\in C_{b,d}(L^{2d}(\mathcal O)),\, \lim_{t\to 0^+} \frac{ {P}_t\varphi(x)-\varphi(x)}{t}=  \\ 
  \DS  \qquad\qquad\quad =g(x),\,x\in L^{2d}(\mathcal O),\;\sup_{t\in(0,1)}\left\|\frac{ {P}_t\varphi-\varphi}{t}\right\|_{C_{b,d}(L^{2d}(\mathcal O))}<\infty \bigg\}\\   
   {}   \\
  \DS  {K}\varphi(x)=\lim_{t\to 0^+} \frac{ {P}_t\varphi(x)-\varphi(x)}{t},\quad \varphi\in D( {K}),\,x\in L^{2d}(\mathcal O).
\end{cases}
\end{equation}
Let  $\mathcal M_{d}(L^{2d}(\mathcal O))$ be the space of all Borel finite measures on $L^{2d}(\mathcal O)$ such that
\[
 \int_{L^{2d}(\mathcal O)}|x|_{L^{2d}(\mathcal O)}^{d}|\mu|(dx)<\infty.
\]
Since $L^{2d}(\mathcal O)\subset H$, we have $\mathcal M_{d}(L^{2d}(\mathcal O))\subset \mathcal M(H)$.
The following theorem generalize Theorem \ref{t.L.1} to the reaction-diffusion case.
\begin{Theorem} \label{t.RD.1}
Let $( {P}_t)_{t\geq0}$ be the semigroup defined by \eqref{e.RD.7} 
 $C_{b,2}(H)$, 
and   let  $( {K},D( {K} ))$ be its infinitesimal generator in $C_{b,d}(L^{2d}(\mathcal O))$, defined   by  \eqref{e.RD.0}.
Then,  the formula 
$$
  \langle \varphi,  {P}_t^*F\rangle_{\mathcal L(C_{b,d}(L^{2d}(\mathcal O)),\,(C_{b,d}(L^{2d}(\mathcal O)))^*)} = \langle  {P}_t\varphi, F\rangle_{\mathcal L(C_{b,d}(L^{2d}(\mathcal O)),\, (C_{b,d}(L^{2d}(\mathcal O)))^*)}
$$
defines a semigroup $( {P}_t^*)_{t\geq0}$ of linear and continuous operators  on $(C_{b,d}(L^{2d}(\mathcal O)))^*$ that maps $\mathcal M_{d}(L^{2d}(\mathcal O))$ into $\mathcal M_{d}(L^{2d}(\mathcal O))$.
Moreover,  for any $\mu\in \mathcal M_{d}(L^{2d}(\mathcal O))$ there exists a unique family of measures  $\{\mu_t\}_{t\geq0}\subset \mathcal M_{d}(L^{2d}(\mathcal O))$
such that
\begin{equation} \label{e.RD.5b}
    \int_0^T\left(\int_H|x|_{L^{2d}(\mathcal O)}^d|\mu_t|(dx)\right)dt<\infty,\quad \forall T>0 
\end{equation}
and   
\begin{equation}   \label{e.RD.8} 
   \int_H\varphi(x)\mu_t(dx)- \int_H\varphi(x)\mu(dx)=
  \int_0^t\left( \int_H {K}\varphi(x)\mu_s(dx)   \right)ds,
\end{equation}
 $t\geq0$, $\varphi\in D({K} )$.
Finally, the solution of \eqref{e.RD.5b}, \eqref{e.RD.8}  is given by $\{{P}_t^*\mu\}_{t\geq0}$.  
\end{Theorem}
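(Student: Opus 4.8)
The plan is to transcribe the proof of Theorem \ref{t.L.1} almost verbatim, replacing the weight $1+|\cdot|$ by $1+|\cdot|_{L^{2d}(\mathcal O)}^d$, the space $C_{b,1}(H)$ by $C_{b,d}(L^{2d}(\mathcal O))$, the space $\mathcal M_1(H)$ by $\mathcal M_d(L^{2d}(\mathcal O))$, and the estimates \eqref{e.L.3}, \eqref{e.L.10a} by their reaction-diffusion counterparts \eqref{e.RD.6b}, \eqref{e.RD.6a}. The whole argument rests on the reaction-diffusion analogue of Proposition \ref{p.L.2.1}, namely Proposition \ref{p.RD.3.1}, which I shall assume throughout: it supplies the transition kernels $\pi_t(x,\cdot)$, the representation $P_t\varphi(x)=\int_{L^{2d}(\mathcal O)}\varphi(y)\pi_t(x,dy)$, the operator bound $\|P_t\|_{\mathcal L(C_{b,d}(L^{2d}(\mathcal O)))}\leq c_0 e^{\omega_0 t}$, the continuity of $t\mapsto P_t\varphi(x)$, and the $\pi$-continuity property analogous to (v) of Proposition \ref{p.L.2.1}, together with the analogue of Proposition \ref{p.L.2.9} for the generator $(K,D(K))$ defined by \eqref{e.RD.0}.

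First I would reproduce Lemma \ref{l.L.3}. The key pointwise estimate is $|P_t\varphi(x)|\leq c\,\|\varphi\|_{C_{b,d}(L^{2d}(\mathcal O))}\,(1+|x|_{L^{2d}(\mathcal O)}^d)$, which follows directly from \eqref{e.RD.6a}; it shows that the duality formula defines a bounded linear operator $P_t^*$ on $(C_{b,d}(L^{2d}(\mathcal O)))^*$, and the semigroup property of $(P_t^*)_{t\geq0}$ is inherited from $(P_t)_{t\geq0}$. To see that $P_t^*$ preserves $\mathcal M_d(L^{2d}(\mathcal O))$ I would argue exactly as in Lemma \ref{l.L.3}: by positivity of $P_t$ it suffices to treat positive $\mu$, and I then set $\Lambda(\Gamma)=\int_{L^{2d}(\mathcal O)}\pi_t(x,\Gamma)\,\mu(dx)$, check it is a positive finite Borel measure, and identify $\Lambda=P_t^*\mu$ by approximating $\varphi\in C_b$ with simple functions and applying dominated convergence. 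Finally, approximating $|\cdot|_{L^{2d}(\mathcal O)}^d$ from below by bounded continuous functions and combining \eqref{e.RD.6a} with $\mu\in\mathcal M_d(L^{2d}(\mathcal O))$ yields $\int_{L^{2d}(\mathcal O)}|x|_{L^{2d}(\mathcal O)}^d\,P_t^*\mu(dx)<\infty$, that is $P_t^*\mu\in\mathcal M_d(L^{2d}(\mathcal O))$.

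Next I would establish that $\{P_t^*\mu\}_{t\geq0}$ solves \eqref{e.RD.5b}, \eqref{e.RD.8}, following Lemma \ref{l.L.5.3}. Condition \eqref{e.RD.5b} is immediate from $\sup_{t\in[0,T]}\|P_t^*\mu\|_{(C_{b,d}(L^{2d}(\mathcal O)))^*}<\infty$. For \eqref{e.RD.8} I would use the reaction-diffusion analogue of Proposition \ref{p.L.2.9}, so that for $\varphi\in D(K)$ the map $t\mapsto P_t\varphi(x)$ is $C^1$ with derivative $P_tK\varphi(x)$; differentiating $t\mapsto\int\varphi\,dP_t^*\mu$ and justifying the interchange of limit and integral by the growth bound and dominated convergence gives derivative $\int K\varphi\,dP_t^*\mu$, and integration in $t$ produces \eqref{e.RD.8}.

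For uniqueness I would, by linearity, set $\mu=0$ and show that any solution $\{\mu_t\}$ of \eqref{e.RD.8} vanishes. The device is the one used in Lemma \ref{l.L.5.3}: restrict the equation to the \emph{bounded} test functions lying in the generator of $(P_t)$ on $C_b(L^{2d}(\mathcal O))$, observe that \eqref{e.RD.5b} implies the finiteness hypothesis needed there, and invoke the $C_b$-uniqueness result (the reaction-diffusion analogue of Theorem \ref{t.L.2.2}, from \cite{Manca07}) to conclude $\mu_t=0$ for all $t\geq0$. I expect the main obstacle to lie precisely in this last step: one must confirm that the transition semigroup really is a stochastically continuous Markov semigroup on $C_b(L^{2d}(\mathcal O))$ so that the abstract uniqueness theorem applies, and that its bounded test functions form a class rich enough to separate measures. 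The unboundedness of the polynomial drift $F$ and the fact that it is defined only on $L^{2d}(\mathcal O)\subsetneq H$ make this verification genuinely more delicate than in the globally Lipschitz setting, and it is exactly here that the weight $1+|\cdot|_{L^{2d}(\mathcal O)}^d$ and the moment estimate \eqref{e.RD.6a} must be exploited with care.
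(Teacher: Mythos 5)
Your first two steps are fine and coincide with the paper's proof: the dual semigroup, its continuity, and the preservation of $\mathcal M_d(L^{2d}(\mathcal O))$ are obtained by rerunning Lemma \ref{l.L.3} with Proposition \ref{p.RD.3.1} and \eqref{e.RD.6a}, and the existence part ($\{P_t^*\mu\}_{t\geq0}$ solves \eqref{e.RD.5b}, \eqref{e.RD.8}) is obtained by rerunning Lemma \ref{l.L.5.3} with the reaction-diffusion analogue of Proposition \ref{p.L.2.9}.

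The genuine gap is in the uniqueness step, exactly where you yourself flagged the ``main obstacle'': you propose to invoke a reaction-diffusion analogue of Theorem \ref{t.L.2.2} for the semigroup acting on $C_b(L^{2d}(\mathcal O))$, but no such analogue is available and you give no argument to produce one. Theorem \ref{t.L.2.2} is proved in \cite{Manca07} for a stochastically continuous Markov semigroup on $C_b(H)$ with $H$ a separable \emph{Hilbert} space, whereas $L^{2d}(\mathcal O)$ is only a Banach space; moreover, the continuity properties of $x\mapsto X(t,x)$ are only known in the $H$-norm (estimate \eqref{e.RD.6b}), so even the Feller property in the $L^{2d}$-topology, which your route would require, is not established anywhere. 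The paper's way around this is the idea your proposal is missing: thanks to \eqref{e.RD.6b} (the analogue of \eqref{e.L.3}), the mild solution, a priori defined only for $x\in L^{2d}(\mathcal O)$, extends by density to a \emph{generalized solution} $(X(t,x))_{t\geq0,\,x\in H}$ with values in $H$; one then extends the transition semigroup to a contraction semigroup on $C_b(H)$ (the full Hilbert space), checks it is a stochastically continuous Markov semigroup in the sense of \cite{Manca07}, and applies Theorem \ref{t.L.2.2} on $H$ itself rather than on $L^{2d}(\mathcal O)$. A second ingredient absent from your outline, but needed in either formulation, is the domain comparison $D(K,C_b(H))\subset D(K)$ (indeed $D(K,C_b(H))=\{\varphi\in D(K):K\varphi\in C_b(H)\}$): without it you cannot pass from the hypothesis that $\{\mu_t\}_{t\geq0}\subset\mathcal M_d(L^{2d}(\mathcal O))\subset\mathcal M(H)$ solves \eqref{e.RD.8} for all $\varphi\in D(K)$ to the statement that it solves the $C_b(H)$-equation \eqref{e.L.11a} for all bounded test functions, which is what the abstract uniqueness theorem consumes. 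With these two ingredients the uniqueness follows as in Lemma \ref{l.L.5.3}; without them, your plan stalls at precisely the point you identified.
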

It worth to note that $C_b(H)\subset C_{b,1}(H)\subset C_{b,d}(L^{2d}(\mathcal O))$, with continuous embedding.
This argument will be used in what follows.
Note, also, that for any $\varphi\in C_{b,d}(L^{2d}(\mathcal O))$ there exists a sequence $(\varphi_n)_{n\in \Nset}\subset C_b(H)$
such that
\[
 \lim_{n\to\infty}\frac{\varphi_n}{1+|\cdot|_{L^{2d}(\mathcal O)}^{d}} \stackrel{\pi}{=}\frac{\varphi }{1+|\cdot|_{L^{2d}(\mathcal O)}^{d}}.
\]
In the above formula we have to understand $(1+|x|_{L^{2d} (\mathcal O)}^d )^{-1}\varphi(x)=0$ if $x\in H\setminus L^{2d} (\mathcal O)$.
This allow us to use the $\pi$-convergence also for functions belonging to the space $C_{b,d}(L^{2d}(\mathcal O))$.
We denote by $\mathcal E_A(H)$ the linear span of the real and imaginary parts of the functions\footnote{Here $A$ is self-adjoint, hence we take $h\in D(A)$ (cf section 1).}
\[
  H\to \Cset,\quad x\mapsto e^{i\langle x, h\rangle},\quad h\in D(A).
\]
The main result of this section is the following
\begin{Theorem} \label{t.RD.2} 
The infinitesimal operator  $( {K},D( {K}))$ defined in \eqref{e.RD.0} is an extension of  $K_0$, and for any $\varphi\in \mathcal E_A(H)$    we have  $\varphi\in D(K)$ and ${K}\varphi=K_0\varphi$.
Moreover, the set $ \mathcal E_A(H)$ is a $\pi$-core for $( {K},D( {K}))$, that is for any $\varphi\in D( {K})$
there exist  $m\in \Nset$ and an $m$-indexed sequence $(\varphi_{n_1,\ldots,n_m})_{n_1\in\Nset,\ldots,n_m\in\Nset}\subset \mathcal E_A(H)$ such that
\begin{equation} \label{e.RD.48}
   \lim_{n_1\to \infty}\cdots\lim_{n_m\to \infty} \frac{\varphi_{n_1,\ldots,n_m}}{1+|\cdot|_{L^{2d}(\mathcal O)}^{d}}\stackrel{\pi}{=}\frac{\varphi}{1+|\cdot|_{L^{2d}(\mathcal O)}^{d}}
\end{equation}
and
 \begin{equation}\label{e.RD.49}
   \lim_{n_1\to \infty}\cdots\lim_{n_m\to \infty} \frac{K_0\varphi_{n_1,\ldots,n_m}}{1+|\cdot|_{L^{2d}(\mathcal O)}^{d}}\stackrel{\pi}{=}\frac{ {K}\varphi}{1+|\cdot|_{L^{2d}(\mathcal O)}^{d}}.
 \end{equation}
\end{Theorem}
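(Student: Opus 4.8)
The plan is to reproduce, step by step, the argument of Section~\ref{s.proof2}, replacing the weight $1+|\cdot|$ by $1+|\cdot|_{L^{2d}(\mathcal O)}^{d}$ and replacing the global Lipschitz hypothesis on $F$ by the \emph{dissipativity} coming from $p'\geq0$. First I would introduce the Ornstein--Uhlenbeck semigroup $(R_t)_{t\geq0}$ associated to the choice $p\equiv0$ in \eqref{e.RD.4.41}, together with its generator $(L,D(L))$ in $C_{b,d}(L^{2d}(\mathcal O))$, and prove the analogue of Theorem~\ref{t.L.4.1}, namely that $D(L)\cap C_b^1(H)=D(K)\cap C_b^1(H)$ and $K\varphi=L\varphi+\langle D\varphi,F\rangle$ there. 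The Taylor--expansion computation is formally identical, but since $F$ is now unbounded one must dominate the remainder $\langle D\varphi,\int_0^t e^{(t-s)A}F(X(s,x))\,ds\rangle$; here the $L^{2d}$--moment bound \eqref{e.RD.6a} is exactly what is needed, and it is the reason the weight $d$ (matching the growth $|F(x)|_H\leq c(1+|x|_{L^{2d}(\mathcal O)}^{d})$) is the correct one.

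Next I would establish the reaction--diffusion analogue of Proposition~\ref{p.L.4.3}: that $\mathcal E_A(H)$ is a $\pi$--core for $(L,D(L))$ in $C_{b,d}(L^{2d}(\mathcal O))$, with $L$ given by \eqref{e.L.44} on $\mathcal E_A(H)$. The construction via the Ces\`aro averages \eqref{e.L.48} and the $\pi$--density of $\mathcal E_A(H)$ (Proposition~\ref{p.L.6.2}) carries over unchanged, because it uses only the linearity of $L$ and the invariance $R_t:\mathcal E_A(H)\to\mathcal E_A(H)$ furnished by \eqref{e.L.43} (note that $A$ is self--adjoint here, so $D(A^*)=D(A)$). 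Combined with the previous step this yields the analogue of Corollary~\ref{c.L.4.6}: $(K,D(K))$ extends $K_0$ and $K\varphi=K_0\varphi$ for every $\varphi\in\mathcal E_A(H)$, which is the first assertion of the theorem.

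The genuinely new point, and the step I expect to be the main obstacle, is the regularization of $F$ and the \emph{uniform} gradient estimate on the regularized resolvents, since $F$ fails to be globally Lipschitz and Proposition~\ref{P.L.7.2} cannot be quoted as stated. The key structural fact is that $p'\geq0$ makes $F$ dissipative, $\langle F(x)-F(y),x-y\rangle\leq\lambda|x-y|^2$, which is precisely what produces the contraction \eqref{e.RD.6b}. I would approximate $F$ by smooth, globally Lipschitz, \emph{still dissipative} maps $F_n$ (for instance a Yosida--type regularization of the monotone part $p$, combined with the mollification $U_t$ of Section~\ref{s.proof2}), so that each $F_n$ satisfies Hypothesis~\ref{h.L.3.1} and hence the whole machinery of Sections~\ref{s.proof1}--\ref{s.proof2} applies to the corresponding semigroup $P_t^n$ and generator $(K_n,D(K_n))$, while $F_n\to F$ pointwise with dissipativity constant $\leq\lambda$ and comparable $L^{2d}$--growth, \emph{uniformly in $n$}. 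The crucial consequence is that the derivative process satisfies $|D_hX^n(t,x)|\leq e^{(\lambda-\pi^2)t}|h|$ uniformly in $n$, whence $\|DR(\alpha,K_n)f\|_{C_b(H;H)}\leq \|Df\|_{C_b(H;H)}/(\alpha-\lambda+\pi^2)$ for $\alpha$ large; this dissipativity--based bound, uniform in $n$ despite the blow--up of the Lipschitz constants of the $F_n$, is the substitute for \eqref{e.L.43a}--\eqref{e.L.25}.

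Finally, with this uniform bound in hand, the three--index diagonal argument of Lemma~\ref{l.L.4.6} goes through verbatim. For $\varphi\in D(K)$ and $\alpha$ large set $f=\alpha\varphi-K\varphi$, fix a $\pi$--approximating sequence $f_{n_1}\in C_b^1(H)$, put $\varphi_{n_1}=R(\alpha,K)f_{n_1}$ and $\varphi_{n_1,n_2}=R(\alpha,K_{n_2})f_{n_1}$, and then $\pi$--approximate each $\varphi_{n_1,n_2}$ from $\mathcal E_A(H)$ using the $\pi$--core result for $L$ of the second step. The only term needing care is $\langle D\varphi_{n_1,n_2},F-F_{n_2}\rangle$, which is controlled in the weighted norm by the uniform gradient bound together with $F_{n_2}\to F$, exactly as in \eqref{e.L.27}--\eqref{e.L.28a}; collecting the iterated $\pi$--limits yields \eqref{e.RD.48} and \eqref{e.RD.49} and completes the proof.
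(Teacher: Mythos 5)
Your proposal is correct and follows essentially the same route as the paper: the paper regularizes $F$ by $F_n(x)(\xi)=\lambda x(\xi)-p_n(x(\xi))$ with $p_n=np/\sqrt{n^2+p^2}$ (smooth, globally Lipschitz, still monotone), proves precisely your dissipativity-based gradient bound, uniform in $n$, namely $|DP_t^n\varphi(x)|\leq e^{2(\lambda-\pi^2)t}\sup_{x\in H}|D\varphi(x)|$ via the derivative process, $p_n'\geq0$ and the Poincar\'e inequality (Proposition \ref{p.RD.9}), and then runs the resolvent/multi-index assembly of Lemma \ref{l.L.4.6} exactly as you describe (Lemma \ref{l.RD.6.8}). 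The only bookkeeping differences are that the paper keeps the Ornstein--Uhlenbeck generator $(L,D(L))$ in $C_{b,1}(H)$ and reuses Proposition \ref{p.L.4.3} through the embedding $C_{b,1}(H)\subset C_{b,d}(L^{2d}(\mathcal O))$ rather than redoing the $\pi$-core in the weighted space, and that approximating $f=\omega\varphi-K\varphi$ by $C_b^1(H)$ functions costs one extra index (first the heat-semigroup smoothing $f_{n_1}(x)=n_1 f(e^{\frac{1}{n_1}A}x)/(n_1+|e^{\frac{1}{n_1}A}x|_{L^{2d}(\mathcal O)}^d)$ into $C_b(H)$, then smoothing into $C_b^1(H)$), so the paper's final sequence has four indices rather than your three—harmless, since the statement allows arbitrary $m$.
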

Thanks to  Theorem \ref{t.RD.2} we are able to prove the following
\begin{Theorem} \label{t.RD.1.4}
For any $\mu \in \mathcal M_{d}(L^{2d}(\mathcal O))$  there exists an unique family of measures $\{\mu_t\}_{t\geq0}\subset \mathcal M_{d}(L^{2d}(\mathcal O)) $ fulfilling \eqref{e.RD.5b}
 and the measure equation 
\begin{equation}\label{e.RD.2.5a}
\int_H\varphi(x)\mu_t(dx)- \int_H\varphi(x)\mu(dx)=
  \int_0^t\left( \int_HK_0\varphi(x)\mu_s(dx)   \right)ds, 
\end{equation}
$t\geq0,\,\varphi \in\mathcal E_A(H)$.
Finally,   the solution of \eqref{e.RD.5b}, \eqref{e.RD.2.5a} is given by $  \{{P}_t^*\mu\}_{t\geq0}$.
\end{Theorem}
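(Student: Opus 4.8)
The plan is to reduce Theorem \ref{t.RD.1.4} to the well-posedness of the abstract measure equation \eqref{e.RD.8} established in Theorem \ref{t.RD.1}, using the $\pi$-core characterization of Theorem \ref{t.RD.2} to transfer between the concrete operator $K_0$ and the abstract generator $K$. The argument follows the same pattern as the proof of Theorem \ref{t.L.1.4} in Section \ref{s.proof3}, with the weight $1+|\cdot|$ replaced by $1+|\cdot|_{L^{2d}(\mathcal{O})}^d$ and the space $\mathcal{M}_1(H)$ replaced by $\mathcal{M}_d(L^{2d}(\mathcal{O}))$.

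First I would check that $\{P_t^*\mu\}_{t\geq0}$ solves \eqref{e.RD.5b} and \eqref{e.RD.2.5a}. The integrability bound \eqref{e.RD.5b} is exactly what Theorem \ref{t.RD.1} provides for $\{P_t^*\mu\}$. For \eqref{e.RD.2.5a}, I would restrict the abstract equation \eqref{e.RD.8}, which $\{P_t^*\mu\}$ satisfies for all $\varphi\in D(K)$ by Theorem \ref{t.RD.1}, to test functions $\varphi\in\mathcal{E}_A(H)$; since $\mathcal{E}_A(H)\subset D(K)$ and $K\varphi=K_0\varphi$ there by Theorem \ref{t.RD.2}, equation \eqref{e.RD.8} becomes precisely \eqref{e.RD.2.5a}.

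For uniqueness, let $\{\mu_t\}_{t\geq0}\subset\mathcal{M}_d(L^{2d}(\mathcal{O}))$ be any family fulfilling \eqref{e.RD.5b}, \eqref{e.RD.2.5a}. The key step is to promote this to a solution of the abstract equation \eqref{e.RD.8} for every $\varphi\in D(K)$, after which the uniqueness part of Theorem \ref{t.RD.1} forces $\mu_t=P_t^*\mu$. Fix $\varphi\in D(K)$ and use the $\pi$-core property of Theorem \ref{t.RD.2} to choose an $m$-indexed sequence $(\varphi_{n_1,\ldots,n_m})\subset\mathcal{E}_A(H)$ satisfying \eqref{e.RD.48}, \eqref{e.RD.49}. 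Writing \eqref{e.RD.2.5a} for each $\varphi_{n_1,\ldots,n_m}$ and passing to the iterated limit one index at a time, I would apply dominated convergence on both sides: on the left, \eqref{e.RD.48} gives pointwise convergence of $(1+|\cdot|_{L^{2d}(\mathcal{O})}^d)^{-1}\varphi_{n_1,\ldots,n_m}$ with a uniform sup-norm bound, hence $|\varphi_{n_1,\ldots,n_m}(x)|\leq c(1+|x|_{L^{2d}(\mathcal{O})}^d)$, so the integrals against $\mu_t,\mu\in\mathcal{M}_d(L^{2d}(\mathcal{O}))$ converge; on the right, \eqref{e.RD.49} similarly yields $|K_0\varphi_{n_1,\ldots,n_m}(x)|\leq c(1+|x|_{L^{2d}(\mathcal{O})}^d)$, so combined with \eqref{e.RD.5b} the double integral $\int_0^t\int_H K_0\varphi_{n_1,\ldots,n_m}\,d\mu_s\,ds$ converges to $\int_0^t\int_H K\varphi\,d\mu_s\,ds$. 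This establishes \eqref{e.RD.8} for $\{\mu_t\}$, and Theorem \ref{t.RD.1} then gives $\mu_t=P_t^*\mu$ for every $t\geq0$.

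The main obstacle I anticipate is controlling the dominated convergence uniformly through the successive limits of the multi-indexed sequence: since the $\pi$-convergence is defined iteratively and does not permit a diagonal sequence, the limit passage must be carried out one index at a time, and at each stage one needs the dominating function $c(1+|x|_{L^{2d}(\mathcal{O})}^d)$ to be $|\mu_s|$-integrable uniformly in $s\in[0,t]$, which is guaranteed precisely by \eqref{e.RD.5b}. A minor technical matter is that the finiteness $\int_H|x|_{L^{2d}(\mathcal{O})}^d|\mu|(dx)<\infty$ concentrates each measure on $L^{2d}(\mathcal{O})$, so the convention $(1+|x|_{L^{2d}(\mathcal{O})}^d)^{-1}\varphi(x)=0$ on $H\setminus L^{2d}(\mathcal{O})$ introduces no error in any of the integrals appearing above.
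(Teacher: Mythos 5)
Your proof is correct and takes essentially the same approach as the paper: existence by restricting the abstract equation \eqref{e.RD.8}, which $\{P_t^*\mu\}_{t\geq0}$ satisfies by Theorem \ref{t.RD.1}, to test functions in $\mathcal E_A(H)$ where $K\varphi=K_0\varphi$ (Theorem \ref{t.RD.2}); and uniqueness by using the $\pi$-core approximation \eqref{e.RD.48}, \eqref{e.RD.49} together with dominated convergence (one index at a time, with the domination $c(1+|x|_{L^{2d}(\mathcal O)}^d)$ made integrable by \eqref{e.RD.5b}) to promote any solution of \eqref{e.RD.2.5a} to a solution of \eqref{e.RD.8}, then invoking the uniqueness part of Theorem \ref{t.RD.1}. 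The only cosmetic difference is that the paper concludes by comparing integrals against $\mathcal E_A(H)$ and using $\pi$-density in $C_b(H)$, while you deduce $\mu_t=P_t^*\mu$ directly from Theorem \ref{t.RD.1}; both are valid.
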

In the next section we  study the transition semigroup  \eqref{e.RD.7} and its infinitesimal generator \eqref{e.RD.0} in the space $C_{b,d}(L^{2d}(\mathcal O)) $.
In section \ref{s.RD.OU}  we shall introduce an approximation of problem \eqref{e.RD.4.41} that will be often used in what follows.
Finally, in sections \ref{s.RD.6.4}, \ref{s.RD.6.5}, \ref{s.RD.6.6} we prove Theorems \ref{t.RD.1}, \ref{t.RD.2}, \ref{t.RD.1.4}, respectively.

\subsection{The transition semigroup in $C_{b,d}(L^{2d}(\mathcal O))$} \label{s.RD.6.3}
 The  following two propositions may be proved in much the same way as Proposition \ref{p.L.2.1} and Proposition \ref{p.L.2.9}.
\begin{Proposition} \label{p.RD.3.1}
 Formula \eqref{e.RD.7} semigroup of operators $({P}_t)_{t\geq0} $ in $C_{b,d}(L^{2d}(\mathcal O)) $,
and there exists  a family of probability measures $\{\pi_t(x,\cdot),\,t\geq0,\,x\in L^{2d}(\mathcal O)\}\subset  \mathcal M_d(L^{2d}(\mathcal O))$ 
and two constants $c_0, \omega_0>0$, 
such that
\begin{itemize}
\item[(i)] $ {P}_t\in \mathcal L(C_{b,d}(L^{2d}(\mathcal O)) )$ and  $  \|  {P}_t\|_{\mathcal L(C_{b,d}(L^{2d}(\mathcal O))  )} \leq c_0 e^{ \omega_0t}$; 
\item[(ii)] $\DS  {P}_t\varphi(x)=\int_H\varphi(y)\pi_t(x,dy) $, for any $t\geq0$, $\varphi \in C_{b,d}(L^{2d}(\mathcal O))$, 
 $x\in L^{2d}(\mathcal O)$;
\item[(iii)] for any  $\varphi \in C_{b,d}(L^{2d}(\mathcal O))$, $x\in H$, the function $\Rset^+\to\Rset$, $t\mapsto  {P}_t\varphi(x)$ is continuous. 
\item[(iv)] $  {P}_t  {P}_s=  {P}_{t+s}$, for any $t,s\geq0$ and $  {P}_0=I$;
\item[(v)] for any $\varphi\in C_{b, d}(L^{2d}(\mathcal O))$ and any sequence $(\varphi_n)_{n\in \Nset}\subset C_{b, d}(L^{2d}(\mathcal O))$ such that
\[
  \lim_{n\to\infty} \frac{\varphi_n}{1+|\cdot|_{L^{2d}(\mathcal O)}^d}  
     \stackrel{\pi}{=}\frac{\varphi }{1+|\cdot|_{L^{2d}(\mathcal O)}^d} 
\]
 we have, for any $t\geq0$,
\[
  \lim_{n\to\infty} \frac{{P}_t\varphi_n}{1+|\cdot|_{L^{2d}(\mathcal O)}^d}  
    \stackrel{\pi}{=}\frac{{P}_t\varphi }{1+|\cdot|_{L^{2d}(\mathcal O)}^d}. 
\]  
\end{itemize}
\end{Proposition}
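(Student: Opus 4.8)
The proof of Proposition~\ref{p.RD.3.1} is meant to parallel that of Proposition~\ref{p.L.2.1}, the only substitutions being the weight $1+|\cdot|$ replaced by $1+|\cdot|_{L^{2d}(\mathcal O)}^d$, the estimate \eqref{e.L.3} replaced by \eqref{e.RD.6b}, and \eqref{e.L.10a} replaced by \eqref{e.RD.6a}. The one ingredient that does not come for free is a \emph{pathwise} contraction estimate in the $L^{2d}(\mathcal O)$-norm, which is the norm in which the elements of $C_{b,d}(L^{2d}(\mathcal O))$ are required to be uniformly continuous. I would record it first: since $p$ is increasing, testing the equation satisfied by $w(t)=X(t,x)-X(t,y)$ (in which the additive noise cancels) against $|w|^{2d-2}w$, using $\langle \Delta w,|w|^{2d-2}w\rangle\leq0$ under the Dirichlet conditions and the monotonicity $\langle p(u)-p(v),|w|^{2d-2}w\rangle\geq0$, yields a constant $\omega_1$ with $|X(t,x)-X(t,y)|_{L^{2d}(\mathcal O)}\leq e^{\omega_1 t}|x-y|_{L^{2d}(\mathcal O)}$. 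This is the $L^{2d}$-analogue of \eqref{e.RD.6b} and it is what makes the modulus-of-continuity argument legitimate in the $L^{2d}(\mathcal O)$ topology.

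For (i), boundedness and the bound $\|P_t\|_{\mathcal L(C_{b,d}(L^{2d}(\mathcal O)))}\leq c_0e^{\omega_0 t}$ are the easy half: \eqref{e.RD.6a} gives directly $|P_t\varphi(x)|\leq c\,\|\varphi\|_{C_{b,d}(L^{2d}(\mathcal O))}(1+|x|_{L^{2d}(\mathcal O)}^d)$, whence $P_t\varphi$ lies in the weighted space, $P_t$ is locally bounded near $t=0$, and the exponential estimate follows from the semigroup property (iv) by the submultiplicativity argument used in Proposition~\ref{p.L.2.1}. The uniform continuity of $x\mapsto(1+|x|_{L^{2d}(\mathcal O)}^d)^{-1}P_t\varphi(x)$ I would obtain from the same splitting into $I_1,I_2,I_3$ as in Proposition~\ref{p.L.2.1}: in $I_1$ the pathwise $L^{2d}$-contraction lets one factor $\theta_\varphi(e^{\omega_1 t}|x-y|_{L^{2d}(\mathcal O)})$ out of the expectation; $I_2$ is handled through $\big||a|^d-|b|^d\big|\leq d(|a|\vee|b|)^{d-1}|a-b|$ together with the contraction and the moment bound \eqref{e.RD.6a} (the moment of order $d-1$ being dominated by that of order $d$); and $I_3$ reduces to the deterministic factor $\big|(1+|x|_{L^{2d}(\mathcal O)}^d)^{-1}-(1+|y|_{L^{2d}(\mathcal O)}^d)^{-1}\big|$ times an expectation bounded via \eqref{e.RD.6a}.

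Parts (ii)--(v) are measure-theoretic and transcribe verbatim. For (ii) one lets $\pi_t(x,\cdot)$ be the law of $X(t,x)$; \eqref{e.RD.6a} shows $\pi_t(x,\cdot)\in\mathcal M_d(L^{2d}(\mathcal O))$, and the representation extends from $C_b(H)$ to all of $C_{b,d}(L^{2d}(\mathcal O))$ by dominated convergence with the integrable majorant $c(1+|\cdot|_{L^{2d}(\mathcal O)}^d)$. For (iii) one uses the $L^{2d}$-continuity in time of the mild solution, $|X(t,x)-X(s,x)|_{L^{2d}(\mathcal O)}\to0$ as $t\to s$, together with the boundedness of $\theta_\varphi$ and \eqref{e.RD.6a}, to pass to the limit by dominated convergence exactly as in \eqref{e.L.15}. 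Part (iv) is the Markov property, first for $\varphi\in C_b(H)$ and then carried to $C_{b,d}(L^{2d}(\mathcal O))$ through the $\pi$-convergence of (v) and the continuity of (iii); and (v) itself is a dominated-convergence statement: pointwise convergence $P_t\varphi_n(x)\to P_t\varphi(x)$ follows from the majorant $c(1+|X(t,x)|_{L^{2d}(\mathcal O)}^d)$, while $\sup_n\|P_t\varphi_n\|_{C_{b,d}(L^{2d}(\mathcal O))}\leq c\sup_n\|\varphi_n\|_{C_{b,d}(L^{2d}(\mathcal O))}<\infty$ supplies the uniform bound required by the definition of $\pi$-convergence.

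The main obstacle, and the only genuine departure from the proof of Proposition~\ref{p.L.2.1}, is the mismatch between the norm $H=L^2(\mathcal O)$ in which the drift is dissipative (so that \eqref{e.RD.6b} is stated) and the norm $L^{2d}(\mathcal O)$ carrying both the weight and the topology of $C_{b,d}(L^{2d}(\mathcal O))$; an $L^2$-distance cannot control an $L^{2d}$-modulus of continuity. I would dispose of this once and for all with the $L^{2d}$-pathwise contraction above and the corresponding $L^{2d}$-continuity in time of $X(t,x)$, both consequences of $p'\geq0$, after which the $I_1,I_2,I_3$ bounds and the dominated-convergence arguments proceed unchanged. A subsidiary point to verify is the uniform integrability underlying the dominated convergence in (iii), for which $\sup_{t\in[0,T]}\EE[|X(t,x)|_{L^{2d}(\mathcal O)}^d]<\infty$ from \eqref{e.RD.6a} provides the dominating bound.
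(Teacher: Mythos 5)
Your proposal is correct, and it follows the route the paper itself intends: the paper's entire proof of Proposition \ref{p.RD.3.1} is the single remark that it ``may be proved in much the same way as Proposition \ref{p.L.2.1} and Proposition \ref{p.L.2.9}'', i.e.\ exactly the transcription you carry out, with \eqref{e.RD.6a}, \eqref{e.RD.6b} in place of \eqref{e.L.10a}, \eqref{e.L.3}. The genuine added value in your write-up is the point you isolate as the ``main obstacle'': the paper only records the pathwise contraction \eqref{e.RD.6b} in the $H=L^2(\mathcal O)$ norm, while the modulus of continuity $\theta_\varphi$ of an element of $C_{b,d}(L^{2d}(\mathcal O))$, and hence the bound on $I_1$, must be fed the distance $|X(t,x)-X(t,y)|_{L^{2d}(\mathcal O)}$; an $L^2$ bound cannot control that. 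Your auxiliary lemma --- testing the equation for $w=X(t,x)-X(t,y)$ (noise cancelled) against $|w|^{2d-2}w$, using $\langle \Delta w,|w|^{2d-2}w\rangle\le 0$ under Dirichlet conditions and the monotonicity of $p$, to get $|X(t,x)-X(t,y)|_{L^{2d}(\mathcal O)}\le e^{\omega_1 t}|x-y|_{L^{2d}(\mathcal O)}$ --- is precisely the missing ingredient, and the same monotonicity argument also underlies the $L^{2d}$-continuity in time that you correctly flag as needed for (iii) and the a.s.\ dominated-convergence passage. (The paper implicitly relies on these facts, which in the cited literature \cite{DP04}, \cite{Cerrai01} come from the same dissipativity computation; strictly speaking one should perform it on smooth or Yosida-type approximations of the mild solution and pass to the limit, but that is routine.) The remaining estimates you give for $I_2$ and $I_3$, via $\bigl||a|^d-|b|^d\bigr|\le d(|a|\vee|b|)^{d-1}|a-b|$ and the moment bound \eqref{e.RD.6a} with H\"older's inequality for the order-$(d-1)$ moment, are sound, and parts (ii)--(v) indeed go through verbatim. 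In short: same approach as the paper, but your version is the more complete one, since it makes explicit (and repairs) the norm mismatch that the paper's one-line proof glosses over.
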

 
\begin{Proposition} \label{p.RD.6.0}
Under the hypothesis of Proposition \ref{p.RD.3.1},
let $(K,D(K\sostituzioneA)) $ 
be  the infinitesimal generator     \eqref{e.RD.0}. 
Then
\begin{itemize} 
\item[(i)] for any $\varphi \in D( {K})$, we have $  {P}_t\varphi \in D( {K}\sostituzioneA))$ and 
     $  {K} {P}_t\varphi =  {P}_t  {K}\varphi$, $t\geq0$;
\item[(ii)] for any $\varphi \in D( {K}\sostituzioneA)$, $x\in L^{2d}(\mathcal O)$, the map $[0,\infty)\to \Rset$, $t\mapsto  {P}_t\varphi(x)$ is continuously differentiable and $(d/dt) {P}_t\varphi(x) =  {P}_t  {K}\varphi(x)$; 
\item[(iii)]   given $c_0, \omega_0>0$  as in Proposition \ref{p.RD.3.1},  for any $\omega>\omega_0$ the linear operator $R(\omega, {K})$ on $C_{b,d}(L^{2d}(\mathcal O))$ done by
$$
   R(\omega, {K})f(x)=\int_0^\infty e^{-\omega t} {P}_tf(x)dt, \quad f\in C_{b,d}(L^{2d}(\mathcal O)),\,x\in L^{2d}(\mathcal O)
$$
satisfies, for any  $f\in C_{b,1}(H)$
$$
   R(\omega, {K})\in \mathcal L(C_{b,d}(L^{2d}(\mathcal O))),\quad \quad \|R(\omega, {K})\|_{\mathcal L(C_{b,d}(L^{2d}(\mathcal O)))}\leq 
\frac{c_0 }{\omega-\omega_0} 
$$
$$
  R(\omega, {K})f\in D( {K}\sostituzioneA),\quad (\omega I- {K})R(\omega, {K})f=f.
$$
We call $R(\omega, {K})$ the {\em resolvent} of $  {K}$ at $\omega$.
\end{itemize}
\end{Proposition}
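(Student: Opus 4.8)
The plan is to mirror the proof of Proposition \ref{p.L.2.9}, replacing the role of Proposition \ref{p.L.2.1} by Proposition \ref{p.RD.3.1} and the weight $1+|\cdot|$ by $1+|\cdot|_{L^{2d}(\mathcal O)}^d$ throughout. All three assertions will rest on the semigroup property (iv), the joint continuity (iii), and the $\pi$-continuity (v) of $(P_t)_{t\geq0}$ recorded in Proposition \ref{p.RD.3.1}, together with the exponential bound (i). No new ideas beyond Section 2 are needed; the work is to check that the weighted-space machinery transfers.

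For (i), I would fix $\varphi \in D(K)$ and use the semigroup law to write, for $s>0$ and $x\in L^{2d}(\mathcal O)$,
$$
\frac{P_s(P_t\varphi)(x) - P_t\varphi(x)}{s} = P_t\!\left(\frac{P_s\varphi - \varphi}{s}\right)(x).
$$
By definition of $D(K)$ the quotient $(P_s\varphi - \varphi)/s$ converges pointwise to $K\varphi$ as $s\to 0^+$ and stays bounded in $C_{b,d}(L^{2d}(\mathcal O))$, hence it $\pi$-converges (in the weighted sense) to $K\varphi$; property (v) of Proposition \ref{p.RD.3.1} then gives $P_t\big((P_s\varphi-\varphi)/s\big)\stackrel{\pi}{\to}P_tK\varphi$. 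This shows simultaneously that the defining limit for $P_t\varphi$ exists and equals $P_tK\varphi$ and that the difference quotients are uniformly bounded in the weighted norm, so $P_t\varphi\in D(K)$ and $KP_t\varphi = P_tK\varphi$.

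Statement (ii) follows from the same identity, which exhibits $P_tK\varphi(x)$ as the right derivative of $t\mapsto P_t\varphi(x)$; since $t\mapsto P_tK\varphi(x)$ is continuous by (iii) of Proposition \ref{p.RD.3.1}, the elementary fact that a continuous function with continuous right derivative is continuously differentiable upgrades this to $C^1$. For (iii) the norm estimate is immediate from (i) of Proposition \ref{p.RD.3.1}:
$$
\left\|\int_0^\infty e^{-\omega t} P_t f\,dt\right\|_{C_{b,d}(L^{2d}(\mathcal O))} \le c_0\int_0^\infty e^{-(\omega-\omega_0)t}\,dt\,\|f\|_{C_{b,d}(L^{2d}(\mathcal O))} = \frac{c_0}{\omega-\omega_0}\|f\|_{C_{b,d}(L^{2d}(\mathcal O))},
$$
the integral converging absolutely for $\omega>\omega_0$; the identities $R(\omega,K)f\in D(K)$ and $(\omega I-K)R(\omega,K)f=f$ are then obtained in the standard way (cf. \cite{Cerrai}, \cite{Priola}) by differentiating under the integral sign and invoking the semigroup law.

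The one point requiring genuine care, rather than literal transcription of Section 2, is that the drift $F$ is defined only on $D(F)=L^{2d}(\mathcal O)\subsetneq H$ and the weight $|x|_{L^{2d}(\mathcal O)}^d$ is infinite off this subspace. I expect the main obstacle to be verifying that each dominated-convergence and $\pi$-convergence step remains legitimate once the evaluations are restricted to $x\in L^{2d}(\mathcal O)$ and the majorants are controlled through the moment estimate \eqref{e.RD.6a}; once Proposition \ref{p.RD.3.1} is available this is routine, but it is precisely where the polynomial, non-Lipschitz nature of $F$ must be accommodated.
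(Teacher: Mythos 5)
Your proposal is correct and takes essentially the same route as the paper, which disposes of Proposition \ref{p.RD.6.0} by stating that it ``may be proved in much the same way as Proposition \ref{p.L.2.9}'': your argument is exactly that transcription, resting on the semigroup law, the pointwise continuity in $t$, the $\pi$-continuity, and the exponential bound from Proposition \ref{p.RD.3.1}, with the resolvent identities left to the standard references just as in the paper. If anything you give more detail than the original (whose proof of Proposition \ref{p.L.2.9} is a few lines citing the same ingredients), and your closing concern about $F$ being defined only on $L^{2d}(\mathcal O)$ is harmless but immaterial, since once Proposition \ref{p.RD.3.1} is granted the whole proof is abstract semigroup theory that never touches the drift.
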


\subsection{Some auxiliary results  } \label{s.RD.OU}

 It is convenient to consider the Ornstein--Uhlenbeck process
$$
\left\{\begin{array}{lll}
dZ(t)=AZ(t)dt+(-A)^{-\gamma/2}dW(t),\\
\\
Z(0)=x,
\end{array}\right.
$$
and the corresponding transition semigroup in $C_{b,1}(H)$
\begin{equation} \label{e.RD.50}
  R_t\varphi(x)=\EE[\varphi(Z(t,x))],\quad\varphi\in C_{b,1}(H).
\end{equation}
Notice that thanks to \eqref{e.RD.4.4}, \eqref{e.RD.4.11} the operator 
$$
\begin{array}{lll}
Q_tx &=& \DS{\int_0^te^{sA}BB^*e^{sA*}xds=\int_0^t(-A)^{-\gamma}e^{2tA}xdt}\\
\\
&=&\frac12(-A)^{-(1+\gamma)}(1-e^{2tA})x,\quad t\ge
0, x\in H,
\end{array}
$$
is of trace class.
This implies that the Ornstein-Uhlenbeck process $Z(t,x)$ has gaussian law of mean $e^{tA}x$ and covariance operator $Q_t$, and the representation formula
\[
      R_t\varphi(x)=\int_H\varphi(e^{tA}x+y)\mathcal N_{Q_t}(dy)
\]
holds for any $t\geq0$, $\varphi\in C_{b,1}(H)$, $x\in H $.
Notice that we can take $\gamma=0$    and $B=I$ (white noise)  only for $n=1$.
As in section \ref{s.L.OU}, we denote by $(L,D(L))$ the infinitesimal generator of the Ornstein-Uhlenbeck semigroup $(R_t)_{t\geq0}$ in the space $C_{b,1}(H)$.

A basic tool we use to prove our results  is provided by the following approximating problem
\begin{equation}
\label{e.RD.4.16}
\left\{\begin{array}{l}
dX^n (t)=(AX^n (t)+F_n (X^n (t))dt+(-A)^{-\gamma/2}dW(t),\\
\\
X^n (0)=x\in H,
\end{array}\right.
\end{equation}
where for any $n\in\Nset,$   $F_n$  is defined by
$$
F_n(x)(\xi)=\lambda x(\xi)-p_n (x(\xi)),
$$
and $p_n$ is defined by
\[
  p_n(\eta)=\frac{np(\eta)}{\sqrt{n^2+p^2(\eta)}},\quad \eta\in \Rset.
\]
Notice that  $p_n$ is bounded and differentiable, with bounded derivative 
\[
  p_n'(\eta)=\frac{np'(\eta)}{\sqrt{n^2+p^2(\eta)}}\left(1-  \frac{p^2(\eta)}{n^2+p^2(\eta)} \right)\geq 0,  
\]
for any $n\in \Nset$, $ \eta\in \Rset$.
Clearly, $|p_n(\eta)|\leq|p (\eta)|$, $\eta\in \Rset $ and $p_n(\eta)\to p(\eta)$ as $n\to \infty$, for any $\eta\in \Rset $.
$F_n$ is Lipschitz continuous, and for any $n\in \Nset$, $x\in H$ problem \eqref{e.RD.4.16} has a unique mild solution $(X^n(t,x))_{t\geq0}$ (cf section 1).
Since by the above discussion we have $|F_n(x)|\leq|F (x)|$, $x\in H $ and $|F_n(x)| \to |F (x)|$ as $n\to \infty$, for any $x\in H $
it is not difficult but tedious to show that for any $x\in L^{2d}(\mathcal O)$ it holds
\begin{equation} \label{e.RD.52}
  \lim_{n\to\infty}\sup_{t\in[0,T]}\EE\left[|X^{ n }(t,x) - X(t,x)|^2\right]=0
\end{equation}
and 
\begin{equation} \label{e.RD.52a}
  \EE\left[|X^{ n }(t,x)|_{L^{2d}(\mathcal O)}^d\right]\leq \EE\left[|X (t,x)|_{L^{2d}(\mathcal O)}^d\right], \quad n\in\Nset.
\end{equation}
\begin{Proposition} \label{p.RD.6.7}
 For any $n\in \Nset$, let $(P_t^n)_{t\geq0}$ be the transitional semigroup associated to the mild solution of problem \eqref{e.RD.4.16} in the space $C_{b,d}(L^{2d}(\mathcal O))$, defined as in \eqref{e.RD.7} with $X^n(t,x)$ replacing $X(t,x)$.
Then 
\begin{itemize}
 \item[{\em (i)}]  $(P_t^n)_{t\geq0}$ satisfies statements (i)--(v) of Proposition  \ref{p.RD.3.1}, and for $c_0,\omega_0$ as in Proposition \ref{p.RD.3.1} we have
$\DS
  \|P_t^n\|_{\mathcal L(C_{b,d}(L^{2d}(\mathcal O)))}\leq c_0e^{\omega_0t};
$
\item[{\em (ii)}]   $(P_t^n)_{t\geq0}$ is a semigroup of operators in the space $C_{b,1}(H)$, and it satisfies statements (i)--(v) of Proposition \ref{p.L.2.1}.
In particular, there exists $c_n, \omega_n>0$ such that 
$\DS
  \|P_t^n\|_{\mathcal L(C_{b,1}(H))}\leq c_ne^{\omega_nt}$, for any $t\geq0$.
\end{itemize}
\end{Proposition}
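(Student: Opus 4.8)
The plan is to obtain both assertions from the two earlier propositions by verifying that the estimates on which they rest carry over to the approximating process $X^n$, with constants that, in part (i), do not depend on $n$. So the whole proof is a transcription argument, and the only genuine work is in isolating the two uniform bounds for $X^n$.

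For part (i) I would first record the $n$-independent growth and contraction estimates for $X^n$. The growth bound is immediate from \eqref{e.RD.52a} combined with \eqref{e.RD.6a}: since $\EE[|X^n(t,x)|_{L^{2d}(\mathcal O)}^d]\le \EE[|X(t,x)|_{L^{2d}(\mathcal O)}^d]$, one gets $\sup_{t\in[0,T]}\EE[|X^n(t,x)|_{L^{2d}(\mathcal O)}^d]\le c(1+|x|_{L^{2d}(\mathcal O)}^d)$ with the \emph{same} constant $c$ as for $X$. The Lipschitz estimate is where the monotonicity of $p_n$ enters: setting $u(t)=X^n(t,x)-X^n(t,y)$ and computing $\tfrac12\tfrac{d}{dt}|u(t)|^2=\langle Au(t),u(t)\rangle+\langle F_n(X^n(t,x))-F_n(X^n(t,y)),u(t)\rangle$, I would use $\langle Au,u\rangle\le-\pi^2|u|^2$ coming from \eqref{e.RD.4.4} and the fact that $p_n'\ge 0$ makes $\eta\mapsto p_n(\eta)$ nondecreasing, whence $\langle p_n(X^n(t,x))-p_n(X^n(t,y)),u(t)\rangle\ge 0$ pointwise. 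This gives $|X^n(t,x)-X^n(t,y)|\le e^{(\lambda-\pi^2)t}|x-y|$, i.e. the analogue of \eqref{e.RD.6b} with a constant that depends on neither $n$ nor the Lipschitz norm of $F_n$.

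With these two $n$-independent estimates in hand, the proof of Proposition \ref{p.RD.3.1} (which itself parallels that of Proposition \ref{p.L.2.1}) applies to $P_t^n$ word for word: the splitting into $I_1,I_2,I_3$ for the uniform continuity of $(1+|\cdot|_{L^{2d}(\mathcal O)}^d)^{-1}P_t^n\varphi$, the representation through the image measures $\pi_t^n(x,\cdot)$, the time-continuity, the semigroup identity and the $\pi$-continuity statement (v) all go through unchanged. Because every constant appearing in those estimates is inherited from $X$, the bound $\|P_t^n\|_{\mathcal L(C_{b,d}(L^{2d}(\mathcal O)))}\le c_0e^{\omega_0t}$ holds with exactly the $c_0,\omega_0$ of Proposition \ref{p.RD.3.1}. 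For part (ii), the key observation is that $F_n$, unlike $F$, is globally Lipschitz as a map $H\to H$: from the explicit expression for $p_n'$ the factors $n/\sqrt{n^2+p^2}$ and $n^2/(n^2+p^2)$ force $p_n'$ to stay bounded on $\Rset$ despite the growth of $p'$, so that $\|DF_n\|_{C_b(H;\mathcal L(H))}\le|\lambda|+\sup_{\eta}|p_n'(\eta)|<\infty$. Thus $F_n$ satisfies Hypothesis \ref{h.L.3.1}(iii) and the entire framework of Section 1 applies with $F_n$ in place of $F$; in particular Proposition \ref{p.L.2.1} yields directly that $(P_t^n)_{t\ge0}$ is a semigroup on $C_{b,1}(H)$ enjoying statements (i)--(v) there, with constants $c_n,\omega_n>0$ satisfying $\|P_t^n\|_{\mathcal L(C_{b,1}(H))}\le c_ne^{\omega_nt}$.

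The main obstacle is the uniformity in $n$ required by part (i): everything hinges on recognizing that the growth bound transfers with an unchanged constant through the domination \eqref{e.RD.52a}, and that the contraction estimate depends only on the sign condition $p_n'\ge 0$ and the spectral bound of $A$, not on the $n$-dependent Lipschitz norm of $F_n$. This is precisely the reason a uniform bound can be claimed in $C_{b,d}(L^{2d}(\mathcal O))$ but only an $n$-dependent one in $C_{b,1}(H)$; once the uniformity is secured, the remainder is a routine repetition of the proofs of Propositions \ref{p.RD.3.1} and \ref{p.L.2.1}.
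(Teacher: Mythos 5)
Your proof is correct and follows essentially the same route as the paper, whose own proof is just two lines: part (i) ``follows by \eqref{e.RD.52a}'' and part (ii) ``follows since equation \eqref{e.RD.4.16} satisfies Hypothesis \ref{h.L.3.1}.'' Your additional details --- the uniform-in-$n$ contraction estimate obtained from $p_n'\ge 0$ and the spectral bound of $A$ (needed so that the $I_1,I_2,I_3$ argument of Proposition \ref{p.L.2.1} transfers), and the verification that $p_n'$ is bounded so that $F_n$ is globally Lipschitz --- are exactly the facts the paper leaves implicit (the latter is stated just after the definition of $p_n$), so you have simply made the paper's argument explicit.
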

\begin{proof}
(i) follows by \eqref{e.RD.52a}.
(ii) follows since equation \eqref{e.RD.4.16} satisfies Hypothesis \ref{h.L.3.1}.
\end{proof}

By (ii) of Proposition \ref{p.RD.6.7}, we can define, for any $n\in \Nset$, the infinitesimal generator $(K_n,D(K_n))$ of the semigroup  $(P_t^n)_{t\geq0}$ in the space $C_{b,1}(H)$ (cf \eqref{e.L.0}). 

By Theorem \ref{t.L.4.1} and Proposition \ref{p.RD.6.7} it follows
\begin{Proposition} \label{p.RD.6.8}
 For any $n\in \Nset$ we have $D(L)\cap C_b^1(H)=D(K_n)\cap C_b^1(H)$, and for any $\varphi\in D(L)\cap C_b^1(H)$ we have
$K_n\varphi=L\varphi+\langle D\varphi,F_n\rangle$.
\end{Proposition}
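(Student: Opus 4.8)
The plan is to observe that, for each fixed $n\in\Nset$, the approximating equation \eqref{e.RD.4.16} is an instance of the abstract equation \eqref{e.L.3.1} with a \emph{globally Lipschitz} nonlinearity, so that Theorem \ref{t.L.4.1} applies to it verbatim once $F$, $P_t$, $K$ are replaced by $F_n$, $P_t^n$, $K_n$. The whole statement is then read off from that theorem.

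First I would check that \eqref{e.RD.4.16} satisfies Hypothesis \ref{h.L.3.1}. The Dirichlet Laplacian $A$ generates $e^{tA}$ of type $\mathcal G(1,-\pi^2)$ by \eqref{e.RD.4.4}; the coefficient $B=(-A)^{-\gamma/2}$ lies in $\mathcal L(H)$, and the operator $Q_t$ is of trace class thanks to \eqref{e.RD.4.4} and \eqref{e.RD.4.11}, as computed at the beginning of section \ref{s.RD.OU}. Finally $F_n$ is globally Lipschitz on $H$: since $p_n$ has bounded derivative and $|p_n|\leq n$, the map $F_n$ satisfies $\sup_{x\in H}\|DF_n(x)\|_{\mathcal L(H)}<\infty$ and $\sup_{x\in H}|F_n(x)|/(1+|x|)<\infty$. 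Hence all the results of sections 2--4, and Theorem \ref{t.L.4.1} in particular, are available for \eqref{e.RD.4.16} in the space $C_{b,1}(H)$.

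Next I would match the objects. Setting $F_n=0$ in \eqref{e.RD.4.16} reduces it to the linear equation for $Z(t,x)$, so the Ornstein--Uhlenbeck semigroup $(R_t)_{t\geq0}$ of \eqref{e.RD.50} and its generator $(L,D(L))$ in $C_{b,1}(H)$ are exactly the $R_t$ and $L$ of Theorem \ref{t.L.4.1}. By part (ii) of Proposition \ref{p.RD.6.7}, $(P_t^n)_{t\geq0}$ is a semigroup on $C_{b,1}(H)$ satisfying (i)--(v) of Proposition \ref{p.L.2.1}, so its generator $(K_n,D(K_n))$ in $C_{b,1}(H)$ is well defined in the sense of \eqref{e.L.0}. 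Theorem \ref{t.L.4.1}, applied with the triple $(R_t,P_t^n,K_n)$ and nonlinearity $F_n$, then yields directly $D(L)\cap C_b^1(H)=D(K_n)\cap C_b^1(H)$ and $K_n\varphi=L\varphi+\langle D\varphi,F_n\rangle$ for every $\varphi\in D(L)\cap C_b^1(H)$, which is the assertion.

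There is no genuinely new analytic difficulty here: the content is entirely carried by Theorem \ref{t.L.4.1}, whose proof (a first-order Taylor expansion of $\varphi(Z(t,x))$ around $\varphi(X^n(t,x))$, combined with the mild formulation of \eqref{e.RD.4.16}) transfers unchanged once $F_n$ is known to be globally Lipschitz. The only point requiring care --- the one I would emphasize --- is precisely the verification that $F_n$ enjoys the two boundedness properties recorded above, so that the Taylor argument of Theorem \ref{t.L.4.1} runs; both follow from the boundedness of $p_n'$ and the at most linear growth of $F_n$. Once this is in place, the equality of the domains intersected with $C_b^1(H)$ and the formula for $K_n$ follow by exactly the argument of Theorem \ref{t.L.4.1}.
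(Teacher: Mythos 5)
Your proposal is correct and follows essentially the same route as the paper, which derives Proposition \ref{p.RD.6.8} directly from Theorem \ref{t.L.4.1} together with Proposition \ref{p.RD.6.7} (whose part (ii) rests precisely on the observation that equation \eqref{e.RD.4.16} satisfies Hypothesis \ref{h.L.3.1} because $F_n$ is globally Lipschitz). Your additional verification of the Lipschitz property of $F_n$ and of the trace-class condition on $Q_t$ simply makes explicit what the paper leaves implicit.
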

The semigroup $(P_t^n)_{t\geq0}$ enjoyes the following property, which will be essential in the proof of Theorem \ref{t.RD.2}.
\begin{Proposition} \label{p.RD.9}
 For any $n\in \Nset$, the semigroup $(P_t^n)_{t\geq0}$ maps $C_b^1(H)$ into $C_b^1(H)$, and for any $\varphi\in C_b^1(H)$ it holds
\[
   |DP_t\varphi(x)| \leq e^{2(\lambda-\pi^2)t}\sup_{x\in H}|D\varphi(x)|
\]
\end{Proposition}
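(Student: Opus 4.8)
The plan is to differentiate the probabilistic representation $P_t^n\varphi(x)=\EE[\varphi(X^n(t,x))]$ in the initial datum $x$ and to bound the resulting first-variation process by a dissipativity estimate. Since $F_n$ is Fr\'echet differentiable with derivative bounded uniformly in $n$ (as noted after \eqref{e.RD.4.16}), the mild solution $X^n(t,x)$ of \eqref{e.RD.4.16} depends differentiably on $x$, and the directional derivative $\eta^h(t):=DX^n(t,x)h$ satisfies, in mild form,
\[
 \eta^h(t)=e^{tA}h+\int_0^t e^{(t-s)A}DF_n(X^n(s,x))\eta^h(s)\,ds;
\]
the stochastic integral in \eqref{e.RD.4.16} contributes nothing here because the noise is additive and hence independent of $x$. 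Differentiating under the expectation, which is legitimate because $D\varphi$ is bounded and $\eta^h$ is integrable, yields
\[
 \langle DP_t^n\varphi(x),h\rangle=\EE\big[\langle D\varphi(X^n(t,x)),\eta^h(t)\rangle\big].
\]

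The core of the argument is the pointwise bound on $\eta^h$. Formally one computes
\[
 \tfrac12\tfrac{d}{dt}|\eta^h(t)|^2=\langle A\eta^h(t),\eta^h(t)\rangle+\langle DF_n(X^n(t,x))\eta^h(t),\eta^h(t)\rangle .
\]
By the spectral bound behind \eqref{e.RD.4.4} one has $\langle A\eta,\eta\rangle\le-\pi^2|\eta|^2$, while $DF_n(x)$ acts as multiplication by $\lambda-p_n'(x(\cdot))$, so that $p_n'\ge0$ gives $\langle DF_n(x)\eta,\eta\rangle\le\lambda|\eta|^2$. Adding these and applying Gronwall's lemma produces $|\eta^h(t)|\le e^{(\lambda-\pi^2)t}|h|$, the infinitesimal counterpart of \eqref{e.RD.6b}. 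Inserting this into the previous identity and taking the supremum over $|h|\le1$ gives the gradient estimate with exponential rate $\lambda-\pi^2$ (the rate consistent with \eqref{e.RD.6b}); the membership $P_t^n\varphi\in C_b^1(H)$ then follows once one checks, using the same bounds together with the continuity of $x\mapsto X^n(t,x)$ and the uniform continuity of $D\varphi$, that $x\mapsto DP_t^n\varphi(x)$ is uniformly continuous and bounded.

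The main obstacle is to make the energy computation rigorous in the mild setting, where $\eta^h(t)$ need not belong to $D(A)$ and $\frac{d}{dt}|\eta^h(t)|^2$ is not literally defined. I would circumvent this by passing to the Galerkin approximations in the eigenbasis $\{e_k\}$ of $A$, in which the variation equation becomes a genuine finite-dimensional ODE where the dissipativity inequality is valid, and then letting the dimension tend to infinity; equivalently, since $A$ is self-adjoint and $A+DF_n(X^n(s,x))$ is for each $s$ self-adjoint and bounded above by $(\lambda-\pi^2)I$, the two-parameter evolution operator it generates has norm at most $e^{(\lambda-\pi^2)t}$. A secondary point is the standard justification of the differentiable dependence of the mild solution on $x$ and of the mild variation equation itself, which follows from the smoothness and uniform Lipschitz bound on $F_n$ (cf. \cite{DP04}, \cite{Cerrai01}).
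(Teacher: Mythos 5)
Your proposal follows essentially the same route as the paper's own proof: differentiate the representation $P_t^n\varphi(x)=\EE[\varphi(X^n(t,x))]$ in $x$, write the first-variation process $\eta^h$ as solution of the linearized equation, obtain the energy inequality $\tfrac12\tfrac{d}{dt}|\eta^h(t)|^2\le(\lambda-\pi^2)|\eta^h(t)|^2$ from $p_n'\ge0$ together with the spectral gap of $A$ (the paper phrases this as integration by parts plus the Poincar\'e inequality, you phrase it as $\langle A\eta,\eta\rangle\le-\pi^2|\eta|^2$; these are the same estimate), and conclude by Gronwall and differentiation under the expectation. Two remarks. First, your conclusion $|\eta^h(t)|\le e^{(\lambda-\pi^2)t}|h|$ is the \emph{correct} consequence of that differential inequality: Gronwall gives $|\eta^h(t)|^2\le e^{2(\lambda-\pi^2)t}|h|^2$, and taking the square root halves the exponent. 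The paper's own proof derives the identical inequality but then records \eqref{e.RD.57a} with the exponent $2(\lambda-\pi^2)$, i.e.\ it omits the square root; that slip propagates into the statement of the proposition itself. Your rate is also the one consistent with \eqref{e.RD.6b}, as you note, and it suffices for the only downstream use (the bound \eqref{e.RD.53}, where $\omega>\max\{0,2(\lambda-\pi^2)\}$ still dominates $\lambda-\pi^2$), so the discrepancy is a typo in the paper rather than a defect in your argument --- though strictly speaking, when $\lambda<\pi^2$ the inequality as printed in the proposition does not follow from either proof. Second, your insistence on justifying the energy computation via Galerkin truncation (or the evolution-operator bound for the self-adjoint generator $A+DF_n$), since the mild solution $\eta^h(t)$ need not lie in $D(A)$, addresses a rigor point that the paper's proof passes over silently; this is a genuine, if modest, improvement.
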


\begin{proof}
 Since the nonlinearity $F_n$ is differentiable with uniformly continuous and bounded differential, it is well known (see, for instance, \cite{DPZ92}) that the mild solution $X^n(t,x)$ of problem \eqref{e.RD.4.16}  is differentiable with respect to  $x$ and for any $x,h\in H$  
we have $DX^n(t,x)\cdot h=\eta_n^h(t,x)$, where $\eta_n^h(t,x)  $ is the mild solution of the differential equation with random coefficients 
\[
  \begin{cases}
   \DS \frac{d}{dt} \eta_n^h(t,x)= A \eta_n^h(t,x)+DF_n(X^n(t,x))\cdot  \eta_n^h(t,x)&t\geq0 \\
      \eta_n^h(t,x)=0.
  \end{cases}
\]
By multiplying by $\eta_n^h(t,x)$ and by integrating over $\mathcal O$ we find
\[
 \frac12\frac{d}{dt} |\eta_n^h(t,x)|^2= \langle (A+\lambda) \eta_n^h(t,x),\eta_n^h(t,x) \rangle 
 -\int_{\mathcal O} p'_n(X^n(t,x)(\xi) ) |\eta_n^h(t,x)(\xi)|^2d\xi. 
\]
By taking into account that $p'_n\geq0$ and by integrating by parts we find 
\[
  \frac12\frac{d}{dt} |\eta_n^h(t,x)|^2+\int_{\mathcal O}|D_\xi \eta_n^h(t,x)(\xi)|^2d\xi \leq \lambda  |\eta_n^h(t,x)|^2.
\]
Now,  the classical Poincar\'e inequality implies $|D_\xi \eta_n^h(t,x)|\geq \pi^2|  \eta_n^h(t,x)|$
and we obtain
\[
  \frac12\frac{d}{dt} |\eta_n^h(t,x)|^2\leq (\lambda-\pi^2) |\eta_n^h(t,x)|^2, \quad x\in H, t\geq0.
\]
Consequently,   by the Gronwall lemma we find
 \begin{equation} \label{e.RD.57a}
  |\eta^h(t,x)|\leq e^{2(\lambda-\pi^2)t}|h|.
\end{equation}
Now take $\varphi\in C_b^1(H)$.
For any $x,h\in H$ we have
\[
  DP_t^n\varphi(x)\cdot h=\EE\left[D\varphi(X^n(t,x))\cdot \eta^h(t,x)   \right].
\]
Hence by \eqref{e.RD.57a} 
\[
  |DP_t^n\varphi(x)\cdot h|\leq\EE\left[|D\varphi(X^n(t,x))|| \eta^h(t,x)|   \right]\leq \sup_{x\in H}|D\varphi(x)|e^{2(\lambda-\pi^2)t}|h|,
\]
which implies the result.
\end{proof}

\subsection{Proof of Theorem \ref{t.RD.1}} \label{s.RD.6.4}

We have first to show that $(P_t^*)_{t\geq0}$ is a semigroup of linear and continuous  operators in $  (C_{b,d}(L^{2d}(\mathcal O)))^*   $ and that $P_t^*\mu\in \mathcal M_d(L^{2d}(\mathcal O))$ for any $t\geq0$, $\mu\in \mathcal M_d(L^{2d}(\mathcal O)) $.
These facts follow  by Proposition \ref{p.RD.3.1} and by the argument of Lemma \ref{l.L.3}.
We left the details to the reader.

We now show existence of a solution for the measure equation, namely we show that $\{P_t^*\mu\}_{t\geq0}$ fulfils \eqref{e.RD.2.5a}, \eqref{e.RD.5b}.
To show that $\{P_t^*\mu\}_{t\geq0}$ fulfils  \eqref{e.RD.2.5a} it can be used the argument in Lemma \ref{l.L.5.3}.
We left the details to the reader.
We now check  that \eqref{e.RD.5b} holds. 
Fix $T>0$.
By the local boundedness of the operators $P_t^*\mu $ and by the semigroup property it  follows that there exists $c>0$ such that 
\[
    \sup_{t\in[0,T]}\|P_t^* \|_{\mathcal L((C_{b,d}(L^{2d}(\mathcal O)))^*)}\leq c.
\]
Still by the first part of the theorem, since $\mu\in \mathcal M_d(L^{2d}(\mathcal O) )$ we have $P_t^*\mu\in \mathcal M_d(L^{2d}(\mathcal O))$. 
Hence
\begin{eqnarray*}
&& \int_0^T\left(\int_H |x|_{L^{2d}(\mathcal O)}^d|P_t^*\mu|(dx)\right)dt =
 \int_0^T\left(\int_{L^{2d}(\mathcal O)} |x|_{L^{2d}(\mathcal O)}^d|P_t^*\mu|(dx)\right)dt 
\\
 &&\qquad  \leq   \int_0^T\|P_t^*\mu\|_{(C_{b,d}(L^{2d}(\mathcal O)))^*}dt  
    \leq c\int_0^T\| \mu\|_{(C_{b,d}(L^{2d}(\mathcal O)))^*}dt
\\
 &&\qquad =c T\| \mu\|_{(C_{b,d}(L^{2d}(\mathcal O)))^*}
  = c T \int_{L^{2d}(\mathcal O))}(1+|x|_{L^{2d}(\mathcal O)}^d)| \mu|(dx)<\infty. 
\end{eqnarray*}
Then, \eqref{e.RD.5b} is proved.

We now prove  uniqueness of the solution.
By \eqref{e.L.3} follows that the mild solution $X(t,x)$ of problem \eqref{e.RD.6} can be extended to a process  $(X(t,x))_{t\geq0, x\in H}$ with values in $H$ and adapted to the filtration $(\mathcal F_t)_{t\geq0}$.
In the literature, the process $X(t,x)$   is called a {\em generalized solution} of equation \eqref{e.RD.6} (see \cite{DP04}).
Hence, we can extend the transition semigroup \eqref{e.RD.7} to a semigroup in $C_b(H)$, still denoted by $(P_t)_{t\geq0}$, by setting  
\[
  P_t\varphi(x)=\EE\left[\varphi(X(t,x))\right]\quad t\geq0,\, x\in H,\, \varphi\in C_b(H).
\]
Clearly, $\|P_t\|_{\mathcal L(C_b(H))}\leq 1 $.
In addiction,  
the representation 
\[
 P_t\varphi(x)=\int_H\varphi(y)\pi_t'(x,dy)
\]
holds for any $\varphi\in C_b(H)$, where $ \pi_t'(x,\cdot)$ is the probability measure on $H$ defined by   
$\pi_t'(x,\Gamma)=\PP(X(t,x)\in \Gamma),\, \Gamma \in\mathcal B(H)$.
It  is clear that  $\pi_t'(x,\Gamma)= \pi_t (x,\Gamma)$ when $\Gamma\in \mathcal B(L^{2d}(\mathcal O))$.
We define the infinitesimal generator $K:D(K,C_b(H))\to C_b(H)$ of the semigroup $(P_t)_{t\geq0}$ in the space $C_b(H)$ as in \eqref{e.L.0a}. 
By arguing as in Lemma \ref{l.L.5.3}, the semigroup $(P_t)_{t\geq0}$ in $C_b(H)$ is a stochastically continuous Markov semigroup, in the sense of \cite{Manca07}.
So, we can apply Theorem \ref{t.L.2.2} and then for any $\mu\in \mathcal M(H)$ there exists a unique family of measures $\{\mu_t\}_{t\geq0}\subset \mathcal M(H)$ such that 
\begin{equation}  \label{e.RD.58a}
    \int_0^T|\mu_t|(H)dt<\infty,\quad \forall T>0 
\end{equation}
and \eqref{e.RD.8} hold  for any $t\geq0,\,\varphi\in D(K,C_b(H))$.

Now take $\mu=0$, and assume that $\{\mu_t\}_{t\geq0}\subset \mathcal M_d(L^{2d}(\mathcal O) )$ fulfils \eqref{e.RD.2.5a}, \eqref{e.RD.5b}. 
Since  $\{\mu_t\}_{t\geq0}\subset \mathcal M (H) $, we want to show that $\{\mu_t\}_{t\geq0}$ fulfils also \eqref{e.RD.58a} and \eqref{e.RD.8} for any $t\geq0,\,\varphi\in D(K,C_b(H))$.
Taking in mind that for this equation the solution is unique, this will imply $\mu_t=0$ (as measure in $H$ and consequently as measure in $L^{2d}(\mathcal O) $) for any $t\geq0$.

Clearly, \eqref{e.RD.58a} follows by \eqref{e.RD.2.5a}.
It is also possible to prove, by a standard argument, that $ D(K,C_b(H))\subset D(K)$ and
$ D(K,C_b(H))=\{ \varphi \in D(K): K\varphi\in C_b(H)\}$.
Then, for any $\varphi\in D(K,C_b(H))$, we have $\varphi\in D(K)$ and hence \eqref{e.RD.8}  holds for any $\varphi\in D(K,C_b(H))$.
This concludes the proof.
\qed

%
%
%
%
\subsection{Proof of Theorem \ref{t.RD.2}} \label{s.RD.6.5}
The proof is splitted into two lemmata.
\begin{Lemma} \label{l.RD.6.1}
Let $(K,D(K))$ be the infinitesimal generator \eqref{e.RD.0}.
We have $D(L)\cap C_b^1(H)\subset D(K)\cap C_b^1(H)$ and $K\varphi(x)=L\varphi(x)+\langle D\varphi(x),F(x)\rangle$ for any $\varphi \in D(L)\cap C_b^1(H)$, $x\in L^{2d}(\mathcal O)$.
Moreover,   $(K,D(K))$ is an extension of $K_0$, and for any $\varphi\in \mathcal E_A(H)$ we have $\varphi\in D(K)$ and $K \varphi=K_0\varphi$.
\end{Lemma}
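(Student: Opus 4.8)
The plan is to transcribe the argument of Theorem~\ref{t.L.4.1} and Corollary~\ref{c.L.4.6} into the weighted space $C_{b,d}(L^{2d}(\mathcal O))$, the only genuinely new point being the control of the \emph{unbounded} drift term $\langle D\varphi,F\rangle$ through the moment estimate \eqref{e.RD.6a}. Let $Z(t,x)$ denote the Ornstein--Uhlenbeck process of \eqref{e.RD.50}, and recall from \eqref{e.RD.6} that the mild solution obeys
\[
   X(t,x)=Z(t,x)+\int_0^t e^{(t-s)A}F(X(s,x))\,ds .
\]
Fixing $\varphi\in D(L)\cap C_b^1(H)$ and $x\in L^{2d}(\mathcal O)$, I would apply the first order Taylor formula exactly as in Theorem~\ref{t.L.4.1}, which gives
\[
   R_t\varphi(x)-\varphi(x)=P_t\varphi(x)-\varphi(x)-\EE\!\left[\int_0^1\!\big\langle D\varphi\big(\xi Z(t,x)+(1-\xi)X(t,x)\big),\,\textstyle\int_0^t e^{(t-s)A}F(X(s,x))\,ds\big\rangle\,d\xi\right].
\]

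Next I would divide by $t$ and let $t\to0^+$. For the pointwise limit, the mean--square continuity of $s\mapsto X(s,x)$ together with $X(0,x)=x\in L^{2d}(\mathcal O)$ forces $\xi Z(t,x)+(1-\xi)X(t,x)\to x$ and $t^{-1}\int_0^t e^{(t-s)A}F(X(s,x))\,ds\to F(x)$, so the bracketed term converges to $\langle D\varphi(x),F(x)\rangle$; since $\varphi\in D(L)$ this yields $\lim_{t\to0^+}t^{-1}(P_t\varphi(x)-\varphi(x))=L\varphi(x)+\langle D\varphi(x),F(x)\rangle$. The crux is the uniform bound. Using $\|e^{(t-s)A}\|\le1$ (from \eqref{e.RD.4.4}), boundedness of $D\varphi$, the polynomial growth $|F(y)|_H\le c(1+|y|^{d}_{L^{2d}(\mathcal O)})$, and the moment estimate \eqref{e.RD.6a}, the remainder is bounded by $\|D\varphi\|_{C_b(H;H)}$ times
\[
   \frac1t\,\EE\!\left[\int_0^t |F(X(s,x))|_H\,ds\right]\le \sup_{s\in[0,t]}\EE\big[|F(X(s,x))|_H\big]\le c\big(1+|x|^{d}_{L^{2d}(\mathcal O)}\big),
\]
uniformly in $t\in(0,1)$. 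Combining this with $\sup_{t\in(0,1)}\|t^{-1}(R_t\varphi-\varphi)\|_{0,1}<\infty$ (valid because $\varphi\in D(L)$) and the continuous embedding $C_{b,1}(H)\subset C_{b,d}(L^{2d}(\mathcal O))$ gives $\sup_{t\in(0,1)}\|t^{-1}(P_t\varphi-\varphi)\|_{C_{b,d}(L^{2d}(\mathcal O))}<\infty$. Hence $\varphi\in D(K)$ with $K\varphi=L\varphi+\langle D\varphi,F\rangle$, proving the inclusion and the formula; the limit lies in $C_{b,d}(L^{2d}(\mathcal O))$ since $L\varphi\in C_{b,1}(H)\subset C_{b,d}(L^{2d}(\mathcal O))$ and $\langle D\varphi,F\rangle$ inherits the weighted bound just displayed.

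Finally, for the extension of $K_0$, I would invoke the Ornstein--Uhlenbeck analogue of Proposition~\ref{p.L.4.3}: every $\varphi\in\mathcal E_A(H)$ lies in $D(L)\cap C_b^1(H)$ with $L\varphi(x)=\tfrac12\mathrm{Tr}[BB^*D^2\varphi(x)]+\langle x,A^*D\varphi(x)\rangle$. Applying the formula established above then gives $K\varphi=L\varphi+\langle D\varphi,F\rangle=K_0\varphi$ for such $\varphi$, directly by the definition \eqref{e.RD.4} of $K_0$, so $\mathcal E_A(H)\subset D(K)$ and $K$ extends $K_0$. The main obstacle is precisely the uniform weighted bound on the remainder: unlike the globally Lipschitz setting of Theorem~\ref{t.L.4.1}, here $F$ is unbounded and defined only on $L^{2d}(\mathcal O)$, so the estimate must be driven by the $d$-th moment bound \eqref{e.RD.6a}, and for the same reason only the single inclusion $D(L)\cap C_b^1(H)\subset D(K)\cap C_b^1(H)$ (rather than the equality of Theorem~\ref{t.L.4.1}) can be expected.
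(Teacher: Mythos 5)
Your proposal is correct and follows essentially the same route as the paper: the Taylor-formula decomposition of Theorem \ref{t.L.4.1} applied with the generalized/mild solution, the pointwise limit, the uniform weighted bound driven by the growth estimate $|F(x)|\leq c\,(1+|x|_{L^{2d}(\mathcal O)}^{d})$, and the extension property via Proposition \ref{p.L.4.3}. If anything, you are slightly more explicit than the paper, which compresses the use of the moment bound \eqref{e.RD.6a} into the single factor $\sup_{x\in L^{2d}(\mathcal O)}|F(x)|/(1+|x|_{L^{2d}(\mathcal O)}^{d})$ in its displayed estimate, whereas you spell out the chain $\frac1t\,\EE\bigl[\int_0^t|F(X(s,x))|\,ds\bigr]\leq c\,(1+|x|_{L^{2d}(\mathcal O)}^{d})$.
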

\begin{proof}
Take $\varphi\in \mathcal E_A(H)$.
We   recall that $\mathcal E_A(H)\subset   C_b^1(H)\cap D(L)$, where $(L,D(L))$ was introduced in section \ref{s.RD.OU}. 
We also stress that  since $L^{2d}(\mathcal O)\subset H$, then  $ D(L)\subset C_{b,1}(H)\subset C_{b,d}(L^{2d}(\mathcal O))$ with continuous embedding.
This allow us to proceed as in Theorem \ref{t.L.4.1} to find
\[
   R_t\varphi(x)-\varphi(x)  = P_t\varphi(x)-\varphi(x)
\]
\[
   -\EE\left[\int_0^1\left\langle D\varphi(\xi Z(t,x)+(1-\xi)X(t,x)),\int_0^te^{(t-s)A}F(X(s,x))ds\right\rangle d\xi \right],
\]
for any $x\in L^{2d}(\mathcal O)$.
Hence, by taking into account that $\varphi\in D(L)$, it follows easily that for any $x\in L^{2d}(\mathcal O)$
$$
  \lim_{t\to 0^+} \frac{P_t\varphi(x)-\varphi(x)}{t} = L\varphi(x)+\langle D\varphi(x),F(x)\rangle.
$$
Since there exists $c>0$ such that $|F(x)|\leq c|x|_{L^{2d}(\mathcal O)}^d$, $x\in L^{2d}(\mathcal O)$, it follows
\[
   \sup_{t\in(0,1]}\bigg\|  \frac{P_t\varphi-\varphi}{t}\bigg\|_{C_{b,d}(L^{2d}(\mathcal O))} 
\]
\[
 \leq \sup_{t\in(0,1)}  \bigg\|  \frac{R_t\varphi-\varphi}{t}\bigg\|_{0,1} +\\
\sup_{x\in H} \|D\varphi(x)\|_{\mathcal L(H)}  \sup_{x\in L^{2d}(\mathcal O) }\frac{|F(x)|}{1+|x|_{L^{2d}(\mathcal O)}^d}   <\infty,
\]
that implies $\varphi \in D(K\sostituzioneA)$. 
This proves the first statement.
The fact that    $(K,D(K))$ is an extension of $K_0$ follows 
by Proposition \ref{p.L.4.3}.
\end{proof}
\begin{Lemma} \label{l.RD.6.8}
The set $\mathcal E_A(H)$ is a $\pi$-core for $(K,D(K))$, and for  any $\varphi \in D( {K}\sostituzioneA)$ there exists $m\in \Nset$   and an  $m$-indexed sequence $(\varphi_{n_1,\ldots,n_m})\subset \mathcal E_A(H)$ such that
\begin{eqnarray} \label{e.RD.57}
 \lim_{n_1\to\infty} \cdots \lim_{n_m\to\infty}  \frac{\varphi_{n_1,\ldots,n_m}}{1+|\cdot|_{L^{2d}(\mathcal O)}^d}
  &\stackrel{\pi}{=}& \frac{\varphi }{1+|\cdot|_{L^{2d}(\mathcal O)}^d}, \\
 \lim_{n_1\to\infty} \cdots \lim_{n_m\to\infty}  \frac{K_0\varphi_{n_1,\ldots,n_m}}{1+|\cdot|_{L^{2d}(\mathcal O)}^d}
  &\stackrel{\pi}{=}& \frac{ {K}\varphi }{1+|\cdot|_{L^{2d}(\mathcal O)}^d}. \label{e.RD.58}
\end{eqnarray} 
\end{Lemma}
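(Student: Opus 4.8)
The plan is to transpose the three-step approximation of Lemma~\ref{l.L.4.6} to the present weight $1+|\cdot|_{L^{2d}(\mathcal O)}^{d}$ and polynomial nonlinearity. Fix $\varphi\in D(K)$ and set $f=\omega\varphi-K\varphi\in C_{b,d}(L^{2d}(\mathcal O))$, where $\omega>\max\{0,\omega_0,2(\lambda-\pi^2)\}$ is chosen large enough that the resolvents $R(\omega,K)$, $R(\omega,K_{n})$ are available (Proposition~\ref{p.RD.6.0}) and the gradient estimate below is summable. First I would record, from \eqref{e.RD.52} and \eqref{e.RD.52a} exactly as \eqref{e.L.28} was obtained, the semigroup convergence $\frac{P_t^n g}{1+|\cdot|_{L^{2d}(\mathcal O)}^d}\stackrel{\pi}{\to}\frac{P_t g}{1+|\cdot|_{L^{2d}(\mathcal O)}^d}$, and then, integrating against $e^{-\omega t}$ and using the uniform $C_{b,d}$-bound of Proposition~\ref{p.RD.6.7}(i), the resolvent convergence $\frac{R(\omega,K_n)g}{1+|\cdot|_{L^{2d}(\mathcal O)}^d}\stackrel{\pi}{\to}\frac{R(\omega,K)g}{1+|\cdot|_{L^{2d}(\mathcal O)}^d}$. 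Using the remark after Theorem~\ref{t.RD.1} I pick $f_{n_1}\in C_b^1(H)$ with $\frac{f_{n_1}}{1+|\cdot|_{L^{2d}(\mathcal O)}^d}\stackrel{\pi}{\to}\frac{f}{1+|\cdot|_{L^{2d}(\mathcal O)}^d}$ and set $\varphi_{n_1}=R(\omega,K)f_{n_1}$, so that $\varphi_{n_1}\stackrel{\pi}{\to}\varphi$ and $K\varphi_{n_1}=\omega\varphi_{n_1}-f_{n_1}\stackrel{\pi}{\to}K\varphi$ in the weighted sense.

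For Step~2 I put $\varphi_{n_1,n_2}=R(\omega,K_{n_2})f_{n_1}$. Since $f_{n_1}\in C_b^1(H)$, each $P_t^{n_2}$ is a contraction on $C_b(H)$ mapping $C_b^1(H)$ into itself, and the crucial estimate of Proposition~\ref{p.RD.9} gives after integration $\varphi_{n_1,n_2}\in C_b^1(H)$ with
\[
 \sup_{n_2}\sup_{x}|D\varphi_{n_1,n_2}(x)|\leq\frac{\sup_{x}|Df_{n_1}(x)|}{\omega-2(\lambda-\pi^2)},
\]
a bound \emph{uniform in $n_2$}. By the resolvent convergence above, $\varphi_{n_1,n_2}\stackrel{\pi}{\to}\varphi_{n_1}$ and $K_{n_2}\varphi_{n_1,n_2}=\omega\varphi_{n_1,n_2}-f_{n_1}\stackrel{\pi}{\to}K\varphi_{n_1}$ as $n_2\to\infty$; since $\varphi_{n_1,n_2}\in D(K_{n_2})\cap C_b^1(H)=D(L)\cap C_b^1(H)$, Proposition~\ref{p.RD.6.8} yields $K_{n_2}\varphi_{n_1,n_2}=L\varphi_{n_1,n_2}+\langle D\varphi_{n_1,n_2},F_{n_2}\rangle$.

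In Step~3 I would apply the $\pi$-core property of $(L,D(L))$ from Proposition~\ref{p.L.4.3} to each $\varphi_{n_1,n_2}\in D(L)\cap C_b^1(H)$, producing $(\varphi_{n_1,n_2,n_3})\subset\mathcal E_A(H)$ with $\varphi_{n_1,n_2,n_3}\stackrel{\pi}{\to}\varphi_{n_1,n_2}$, $L\varphi_{n_1,n_2,n_3}\stackrel{\pi}{\to}L\varphi_{n_1,n_2}$ and $\langle D\varphi_{n_1,n_2,n_3},h\rangle\stackrel{\pi}{\to}\langle D\varphi_{n_1,n_2},h\rangle$ for each $h\in H$. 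These are $\pi$-convergences for the $H$-weight $1+|\cdot|$; since the embedding $L^{2d}(\mathcal O)\hookrightarrow H$ gives $1+|x|_H\leq C(1+|x|_{L^{2d}(\mathcal O)}^{d})$, multiplying by the bounded ratio transfers them to the $1+|\cdot|_{L^{2d}(\mathcal O)}^{d}$-weighted $\pi$-convergence needed here. As $F_{n_2}$ is globally Lipschitz, $\langle D\varphi_{n_1,n_2,n_3},F_{n_2}\rangle$ converges accordingly, so $K_{n_2}\varphi_{n_1,n_2,n_3}\stackrel{\pi}{\to}K_{n_2}\varphi_{n_1,n_2}$. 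Using that $K$ extends $K_0$ (Lemma~\ref{l.RD.6.1}) and $\varphi_{n_1,n_2,n_3}\in\mathcal E_A(H)\subset D(L)\cap C_b^1(H)$, I would write $K_0\varphi_{n_1,n_2,n_3}=K_{n_2}\varphi_{n_1,n_2,n_3}+\langle D\varphi_{n_1,n_2,n_3},F-F_{n_2}\rangle$ and let $n_3\to\infty$ to reach the limit $K_{n_2}\varphi_{n_1,n_2}+\langle D\varphi_{n_1,n_2},F-F_{n_2}\rangle$.

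The main obstacle, and the essential departure from Lemma~\ref{l.L.4.6}, is the control of $\langle D\varphi_{n_1,n_2},F-F_{n_2}\rangle$ as $n_2\to\infty$. In the globally Lipschitz setting this was done through the uniform bound $\|DF_n\|\leq\kappa$; here the $F_n$ have Lipschitz constants blowing up with $n$, so no such bound exists, and the role of Proposition~\ref{P.L.7.2} must be played by the uniform-in-$n$ gradient estimate of Proposition~\ref{p.RD.9}, which rests on the monotonicity $p_n'\geq0$ and the Poincaré inequality rather than on any Lipschitz constant. Using $|F(x)-F_{n_2}(x)|\leq 2|F(x)|\leq c|x|_{L^{2d}(\mathcal O)}^{d}$ together with $F_{n_2}(x)\to F(x)$ in $H$ for each $x\in L^{2d}(\mathcal O)$, the uniform bound of Step~2 gives
\[
 \frac{|\langle D\varphi_{n_1,n_2}(x),F(x)-F_{n_2}(x)\rangle|}{1+|x|_{L^{2d}(\mathcal O)}^{d}}\leq C\,\frac{|F(x)-F_{n_2}(x)|}{1+|x|_{L^{2d}(\mathcal O)}^{d}},
\]
a quantity bounded uniformly in $n_2$ and vanishing pointwise, whence $\frac{\langle D\varphi_{n_1,n_2},F-F_{n_2}\rangle}{1+|\cdot|_{L^{2d}(\mathcal O)}^{d}}\stackrel{\pi}{\to}0$. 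Combining this with $K_{n_2}\varphi_{n_1,n_2}\stackrel{\pi}{\to}K\varphi_{n_1}$ and letting $n_2\to\infty$ then $n_1\to\infty$ yields \eqref{e.RD.58}, while \eqref{e.RD.57} follows directly from the three nested $\pi$-limits; as the resulting three-indexed sequence lies in $\mathcal E_A(H)$, this proves that $\mathcal E_A(H)$ is a $\pi$-core for $(K,D(K))$.
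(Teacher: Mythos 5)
Your strategy coincides with the paper's: write $\varphi=R(\omega,K)f$, approximate $f$ by smooth functions, pass through the approximating generators $K_n$, kill the error term $\langle D\cdot,F-F_n\rangle$ by domination, and finish with Proposition \ref{p.L.4.3}; in particular you correctly identify the essential new ingredient, namely that the uniform-in-$n$ gradient estimate of Proposition \ref{p.RD.9} (monotonicity of $p_n'$ plus Poincar\'e) must replace the Lipschitz-based Proposition \ref{P.L.7.2}. However, two of your steps are not justified as written. First, in Step 1 you produce, with a \emph{single} index, functions $f_{n_1}\in C_b^1(H)$ with $f_{n_1}/(1+|\cdot|_{L^{2d}(\mathcal O)}^d)\stackrel{\pi}{\to} f/(1+|\cdot|_{L^{2d}(\mathcal O)}^d)$, citing the remark after Theorem \ref{t.RD.1}. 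That remark only yields approximants in $C_b(H)$, not in $C_b^1(H)$, and since $\pi$-convergence is pointwise no diagonal sequence may be extracted to merge two approximation stages into one index (the paper stresses this). The paper accordingly spends two indices here: $n_1$ for the regularization $f_{n_1}(x)= n_1 f(e^{\frac{1}{n_1}A}x)\,/\,(n_1+|e^{\frac{1}{n_1}A}x|_{L^{2d}(\mathcal O)}^d)\in C_b(H)$, which uses the smoothing $e^{tA}:H\to L^{2d}(\mathcal O)$, and $n_2$ for a $C_b^1(H)$ approximation of $f_{n_1}$. Your sequence must therefore carry $m=4$ indices, not $m=3$; the repair is routine but cannot be skipped.

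Second, you treat $R(\omega,K_{n_2})$ at the \emph{fixed} $\omega$ as an honest resolvent, "available by Proposition \ref{p.RD.6.0}" -- but that proposition concerns $K$, not the $K_n$. In $C_{b,1}(H)$ the semigroups $P_t^{n}$ only satisfy $\|P_t^{n}\|_{\mathcal L(C_{b,1}(H))}\leq c_n e^{\omega_n t}$ with constants depending on $n$ (Proposition \ref{p.RD.6.7}(ii)), and since the Lipschitz constants of the $F_n$ blow up, no fixed $\omega$ is guaranteed to dominate every $\omega_n$. This is precisely the subtlety the paper flags: it defines $\varphi_{n_1,n_2,n_3}=\int_0^\infty e^{-\omega t}P_t^{n_3}f_{n_1,n_2}\,dt$, remarks explicitly that this is \emph{not} the resolvent of $K_{n_3}$ in $C_{b,1}(H)$, and instead verifies directly -- using the boundedness of $f_{n_1,n_2}$ and the contraction property of $P_t^{n_3}$ on $C_b(H)$ -- that the integral converges, lies in $D(K_{n_3})$, and satisfies $K_{n_3}\varphi_{n_1,n_2,n_3}=\omega\varphi_{n_1,n_2,n_3}-f_{n_1,n_2}$, which is all that is needed downstream. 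Your construction survives the same patch (your $f_{n_1}$, once it exists, is bounded), but the justification must be replaced: both the membership $\varphi_{n_1,n_2}\in D(K_{n_2})$ and the identity $K_{n_2}\varphi_{n_1,n_2}=\omega\varphi_{n_1,n_2}-f_{n_1}$ have to be checked by the direct Laplace-transform argument, not quoted from a resolvent bound that does not hold uniformly in $n_2$.
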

\begin{proof}
Take $\varphi\in D( {K})$.
We shall construct the claimed sequence in four steps.\\
{\bf Step 1}. 
Fix $\omega>\omega_0, 2(\lambda-\pi^2)$ and set $f=\omega \varphi- {K}\varphi$.
Then we have $\varphi=R(\omega , {K})f$.
We   approximate $f$ as follows: for any $n_1\in \Nset$ we set
\[
   f_{n_1}(x)=\frac{n_1f(e^{\frac{1}{n_1}A}x)}{n_1+|e^{\frac{1}{n_1}A}x) |_{L^{2d}(\mathcal O)}^d},\quad x\in H
\]
By the well known properties of the heat semigroup, we have $e^{\frac{1}{n_1}A}x\in L^{2d}(\mathcal O)$, for any $x\in H$.
Hence, $f_{n_1}\in C_b(H)$ and 
\[
   \lim_{n_1\to\infty}\frac{f_{n_1}}{1+|\cdot|_{L^{2d}(\mathcal O)}^d}\stackrel{\pi}{=}\frac{f }{1+|\cdot|_{L^{2d}(\mathcal O)}^d}.
\]
By Proposition \ref{p.RD.3.1} we have
\[
  \lim_{n_1\to\infty}\frac{P_tf_{n_1}}{1+|\cdot|_{L^{2d}(\mathcal O)}^d}\stackrel{\pi}{=}\frac{P_tf }{1+|\cdot|_{L^{2d}(\mathcal O)}^d}
\]
for any $t\geq0$.
Since we have $\|P_t\|_{\mathcal L(C_{b,d}( L^{2d}(\mathcal O)))}\leq c_0e^{\omega_0 t} $, $\forall t\geq0$ (cf (i) of Proposition \ref{p.RD.3.1}) and $\omega>\omega_0$, it follows
\[
  \lim_{n_1\to\infty}\frac{R(\omega,K)f_{n_1}}{1+|\cdot|_{L^{2d}(\mathcal O)}^d}\stackrel{\pi}{=}
   \frac{R(\omega,K)f }{1+|\cdot|_{L^{2d}(\mathcal O)}^d}.
\]
Setting $\varphi_{n_1}=R(\omega , {K})f_{n_1}$, by the above argument we have
\begin{equation} \label{e.RD.a1}
  \lim_{n_1\to\infty}\frac{\varphi_{n_1}}{1+|\cdot|_{L^{2d}(\mathcal O)}^d}
    \stackrel{\pi}{=}\frac{\varphi }{1+|\cdot|_{L^{2d}(\mathcal O)}^d},\quad
   \lim_{n_1\to\infty}\frac{K\varphi_{n_1}}{1+|\cdot|_{L^{2d}(\mathcal O)}^d}
    \stackrel{\pi}{=}\frac{K\varphi }{1+|\cdot|_{L^{2d}(\mathcal O)}^d}.
\end{equation}
{\bf Step 2}.
For any $n_1\in \Nset$, let us fix a sequence $(f_{n_1,n_2} )_{n_2\in\Nset}\subset C_b^1(H) $ such that
\[
   \lim_{n_2\to\infty} f_{n_1,n_2} \stackrel{\pi}{=} f_{n_1}.
\]
Now set 
$\varphi_{n_1,n_2}=R(\omega ,K)f_{n_1,n_2}$.
By arguing as in step 1 we have
\begin{equation} \label{e.RD.a2}
   \lim_{n_2\to\infty}  \varphi_{n_1,n_2} \stackrel{\pi}{=} \varphi_{n_1},\quad \lim_{n_2\to\infty}  K\varphi_{n_1,n_2} \stackrel{\pi}{=} K\varphi_{n_1}.
\end{equation}
{\bf Step 3}.
We now consider the approximation of $K$. 
We denote by $(K_{n_3},D(K_{n_3}))$ the infinitesimal generator of the transition semigroup associated to the mild solution of problem  \eqref{e.RD.4.16} in the space $C_{b,1}(H)$. 
For any $n_1,n_2,n_3\in \Nset$ set 
\[
  \varphi_{n_1,n_2,n_3}= \int_0^\infty e^{-\omega t}P_t^{n_3}f_{n_1,n_2}dt.
\]
Note that in the right-hand side we have not the {\em resolvent} operator of $K_{n_3}$ in $C_{b,1}(H)$ (cf Proposition \ref{p.L.2.9}, \ref{p.RD.6.7}).
For any $n_1,n_2,n_3\in \Nset$ the function $\varphi_{n_1,n_2,n_3}$ is bounded, since
\[
  \left|\int_0^\infty e^{-\omega t}P_t^{n_3}f_{n_1,n_2}dt\right|\leq \|f\|_0 \int_0^\infty e^{-\omega t}dt<\infty.
\]
The fact that $\varphi_{n_1,n_2,n_3}\in C_b(H)$ follows by standard computations.
By (v) of Proposition \ref{p.L.2.1} and by (i) of Proposition \ref{p.RD.6.7} it follows
\begin{equation} \label{e.RD.a3}
   \lim_{n_3\to\infty}\frac{\varphi_{n_1,n_2,n_3}}{1+|\cdot|_{L^{2d}(\mathcal O)}^d}\stackrel{\pi}{=}
 \frac{\varphi_{n_1,n_2}}{1+|\cdot|_{L^{2d}(\mathcal O)}^d},
\end{equation}
It is also stardard to show that $\varphi_{n_1,n_2,n_3} \in D(K_{n_3})$ 
   and $K_{n_3}\varphi_{n_1,n_2,n_3}=\omega \varphi_{n_1,n_2,n_3}-f_{n_1,n_2}$.
Hence, by \eqref{e.RD.a3} we obtain
\begin{equation} \label{e.RD.a3aa}
 \lim_{n_3\to\infty}\frac{K_{n_3}\varphi_{n_1,n_2,n_3}}{1+|\cdot|_{L^{2d}(\mathcal O)}^d}\stackrel{\pi}{=}
 \frac{K\varphi_{n_1,n_2}}{1+|\cdot|_{L^{2d}(\mathcal O)}^d}
\end{equation}
 By Proposition \ref{p.RD.9}   it follows that $ \varphi_{n_1,n_2,n_3}\in C_b^1(H)$ and
\begin{multline}\label{e.RD.53}
   |D\varphi_{n_1,n_2,n_3}(x)|
  =\left| \int_0^\infty e^{-\omega t}D P_t^{n_3}f_{n_1,n_2}(x)dt \right| \\
  \leq \int_0^\infty e^{-(\omega-2\lambda+2\pi^2) t}dt \sup_{x\in H} |Df_{n_1,n_2}(x)|
 \leq    \frac{ \sup_{x\in H} |Df_{n_1,n_2}(x)|}{\omega-2(\lambda-\pi^2)}.
\end{multline}
Hence $\varphi_{n_1,n_2,n_3}\in  D(K_{n_3})\cap C_b^1(H) $, and by Proposition \ref{p.RD.6.8} it follows that
$K_{n_3}\varphi_{n_1,n_2,n_3}=L\varphi_{n_1,n_2,n_3}+\langle D\varphi_{n_1,n_2,n_3},F_{n_3}\rangle$.
Hence, by Lemma \ref{l.RD.6.1} we have, for any $x\in L^{2d}(\mathcal O)$
\begin{eqnarray} 
  K\varphi_{n_1,n_2,n_3}(x)&=&L\varphi_{n_1,n_2,n_3}(x) +\langle D\varphi_{n_1,n_2,n_3}(x),F(x) \rangle\notag \\
  &=&K_{n_3}\varphi_{n_1,n_2,n_3}(x)+\langle D\varphi_{n_1,n_2,n_3}(x),F(x)-F_{n_3}(x) \rangle.\label{e.RD.a3a}
\end{eqnarray}
We recall that $|F_{n_3}(x)|\leq |F(x)|\leq c|x|_{L^{2d}(\mathcal O)}^d$, for any $ n_3\in\Nset $, $x\in L^{2d}(\mathcal O)$ and for some $c>0$. 
In addiction, $|F_{n_3}(x)- F(x)|\to 0$ as $n_3\to \infty$, for any $x\in L^{2d}(\mathcal O)$.
Consequently, by \eqref{e.RD.53} it follows 
\begin{equation}\label{e.RD.a3b}
  \lim_{n_3\to\infty}\frac{\langle D\varphi_{n_1,n_2,n_3},F-F_{n_3}\rangle }{1+|\cdot|_{L^{2d}(\mathcal O)}^d}\stackrel{\pi}{=}0.
\end{equation}
{\bf Step 4}.
By Propositon \ref{p.L.4.3}
for any $n_1,n_2,n_3\in \Nset$ 
there exists a sequence\footnote{we assume that it has one index} $(\varphi_{n_1,n_2,n_3,n_4})$ $ \subset \mathcal E_A(H)$ such that
\begin{equation} \label{e.RD.a4}
   \lim_{n_4\to\infty}\varphi_{n_1,n_2,n_3,n_4}\stackrel{\pi}{=}\varphi_{n_1,n_2,n_3},
\end{equation}
\begin{eqnarray}\label{e.RD.a5}
   \lim_{n_4\to\infty} \frac{ \frac12\textrm{Tr}\big[BB^*D^2\varphi_{n_1,n_2,n_3,n_4}\big]+\langle x,A^*D\varphi_{n_1,n_2,n_3,n_4}\rangle}{1+|\cdot|}
 \stackrel{\pi}{=}
 \frac{L\varphi_{n_1,n_2,n_3}}{1+|\cdot|}
\end{eqnarray}
and for any $h\in H$ 
\[
 \lim_{n_4\to\infty}\langle D\varphi_{n_1,n_2,n_3,n_4},h\rangle \stackrel{\pi}{=}\langle D\varphi_{n_1,n_2,n_3},h\rangle.
\]
This, together with the above approximation, implies that for any $n_1,n_2,n_3\in\Nset$ we have
\begin{equation} \label{e.RD.a6}
  \lim_{n_4\to\infty}    \frac{ \langle D\varphi_{n_1,n_2,n_3,n_4},F-F_{ n_3}\rangle}{1+|\cdot|_{L^{2d}(\mathcal O)}^d}
   \stackrel{\pi}{=}\frac{\langle D\varphi_{n_1,n_2,n_3},F-F_{ n_3}\rangle }{1+|\cdot|_{L^{2d}(\mathcal O)}^d}.
\end{equation}
{\bf Step 5}.
By \eqref{e.RD.a1}, \eqref{e.RD.a2}, \eqref{e.RD.a3}, \eqref{e.RD.a4} we have
\[
 \lim_{n_1\to\infty}\lim_{n_2\to\infty}\lim_{n_3\to\infty}\lim_{n_4\to\infty} \frac{\varphi_{n_1,n_2,n_3,n_4}}{1+|\cdot|_{L^{2d}(\mathcal O)}^d}
    \stackrel{\pi}{=}\frac{\varphi }{1+|\cdot|_{L^{2d}(\mathcal O)}^d},
\]
and consequently \eqref{e.RD.57} follows.
We now check  
\[
 \lim_{n_1\to\infty}\lim_{n_2\to\infty}\lim_{n_3\to\infty}\lim_{n_4\to\infty} \frac{K_0\varphi_{n_1,n_2,n_3,n_4}}{1+|\cdot|_{L^{2d}(\mathcal O)}^d}
    \stackrel{\pi}{=}\frac{K\varphi }{1+|\cdot|_{L^{2d}(\mathcal O)}^d}.
\]
This will prove \eqref{e.RD.58}.
By Lemma \ref{l.RD.6.1}, for any $n_1,n_2,n_3,n_4\in \Nset $ we have $ K\varphi_{n_1,n_2,n_3,n_4}=K_0\varphi_{n_1,n_2,n_3,n_4}$. 
Moreover, by Theorem \ref{t.L.2} we have $\varphi_{n_1,n_2,n_3,n_4}\in D(K_3)$ and by \eqref{e.RD.a3a}  
\[
 K_0\varphi_{n_1,n_2,n_3,n_4}(x)= K_{n_3}\varphi_{n_1,n_2,n_3,n_4}(x)+\langle D\varphi_{n_1,n_2,n_3,n_4}(x)  ,F(x)-F_{n_3}(x)  \rangle,
\]
for any $n_1,n_2,n_3,n_4\in \Nset $, $x\in L^{2d}(\mathcal O)$.
By \eqref{e.RD.a3a}, \eqref{e.RD.a5}, \eqref{e.RD.a6} it holds
\begin{eqnarray*}
   \lim_{n_4\to\infty} \frac{K_0\varphi_{n_1,n_2,n_3,n_4}}{1+|\cdot|_{L^{2d}(\mathcal O)}^d} &\stackrel{\pi}{=}& 
 \frac{K_{n_3}\varphi_{n_1,n_2,n_3 }+\langle D\varphi_{n_1,n_2,n_3 },F-F_{ n_3}  \rangle}{1+|\cdot|_{L^{2d}(\mathcal O)}^d}.
\end{eqnarray*}
By  \eqref{e.RD.a3aa}, \eqref{e.RD.a3b} it holds
\[
  \lim_{n_3\to\infty} \frac{K_{n_3}\varphi_{n_1,n_2,n_3 }+\langle D\varphi_{n_1,n_2,n_3 },F-F_{ n_3}  \rangle}{1+|\cdot|_{L^{2d}(\mathcal O)}^d}\stackrel{\pi}{=} \frac{K\varphi_{n_1,n_2}}{1+|\cdot|_{L^{2d}(\mathcal O)}^d}.
\]
By  \eqref{e.RD.a1},  \eqref{e.RD.a2} it holds
\[
 \lim_{n_1\to\infty} \lim_{n_2\to\infty}  \frac{K\varphi_{n_1,n_2}}{1+|\cdot|_{L^{2d}(\mathcal O)}^d} 
   \stackrel{\pi}{=} \frac{K\varphi }{1+|\cdot|_{L^{2d}(\mathcal O)}^d}. \qedhere
\]
\end{proof}

\subsection{Proof of Theorem \ref{t.RD.1.4}} \label{s.RD.6.6}
 
Take $\mu\in \mathcal M_d(L^{2d}(\mathcal O) )$. 
The fact that $\{P_t^*\mu\}_{t\geq0}$ fulfils \eqref{e.RD.5b} and 
\eqref{e.RD.2.5a} follows by Theorems \ref{t.RD.1}, \ref{t.RD.2} and by the fact that $KP_t\varphi=P_tK\varphi=P_tK_0\varphi$, for any $\varphi\in \mathcal E_A(H)$ (cf Proposition \ref{p.RD.6.0} and Lemma \ref{l.RD.6.1}).
Hence, existence of a solution is proved.
Now we show uniqueness.  
Assume that $\{\mu_t\}_{t\geq0}\subset \mathcal M_d(L^{2d}(\mathcal O) )$ fulfils \eqref{e.RD.5b} and 
\eqref{e.RD.2.5a}.
By Theorem \ref{t.RD.2} for any $\varphi\in D(K )$  there exist $m\in\Nset$ and an $m$-indexed  sequence $(\varphi_{n_1,\ldots,n_m})_{n_1\in\Nset,\ldots,n_m\in\Nset}\subset \mathcal E_A(H)$ such that
\eqref{e.RD.48}, \eqref{e.RD.49} hold.   
This, togheter with \eqref{e.RD.5b}, implies that $\{ \mu_t\}_{t\geq0} $ fulfils \eqref{e.RD.8} for any $t\geq0,\,\varphi\in D(K)$ (here we can use the same argument used to prove Theorem \ref{t.L.1.4}).
Since the solution of \eqref{e.RD.5b}, \eqref{e.RD.8} is unique and it is given by $\{P_t^*\mu\}_{t\geq0}$,
it follows $\int_H\varphi(x)P_t^*\mu(dx)=\int_H\varphi(x) \mu_t(dx) $, for any $\varphi\in \mathcal E_A(H)$.
Hence, since $\mathcal E_A(H)$ is $\pi$-dense in $C_b(H)$, it follows $\int_H\varphi(x)P_t^*\mu(dx)=\int_H\varphi(x) \mu_t(dx) $, for any $\varphi\in C_b(H)$, that implies $P_t^*\mu=\mu_t$, $\forall t\geq0$.
This concludes the proof.
\qed

\end{document}